\newcommand*{\mailto}[1]{\href{mailto:#1}{\nolinkurl{#1}}}
\newcommand{\bbC}{{\mathbb{C}}}
\newcommand{\bbN}{{\mathbb{N}}}
\newcommand{\bbR}{{\mathbb{R}}}
\newcommand{\cB}{{\mathcal B}}
\newcommand{\cH}{{\mathcal H}}
\newcommand{\cK}{{\mathcal K}}
\DeclareMathOperator{\supp}{supp}
\DeclareMathOperator{\dom}{dom}
\renewcommand{\Im}{\text{\rm Im}}
\newcommand{\no}{\notag}
\newcommand{\lb}{\label}
\newcommand{\f}{\frac}
\newcommand{\ol}{\overline}
\newcommand{\ti}{\tilde}
\newcommand{\oh}{o}
\newcommand{\bi}{\bibitem}
\newcommand{\dott}{\,\cdot\,}
\let\geq\geqslant
\let\leq\leqslant
\def\theequation{\@arabic\c@equation}
\numberwithin{equation}{section}
\newtheorem{theorem}{Theorem}[section]
\newtheorem{lemma}[theorem]{Lemma}
\newtheorem{definition}[theorem]{Definition}
\newtheorem{hypothesis}[theorem]{Hypothesis}
\newtheorem{example}[theorem]{Example}
\theoremstyle{remark}
\newtheorem{remark}[theorem]{Remark}
\begin{document}

\numberwithin{equation}{section}
\allowdisplaybreaks

\title[Principal Solutions Revisited]{Principal Solutions Revisited} 

\author[S.\ Clark]{Stephen Clark}
\address{Department of Mathematics \& Statistics,
Missouri University of Science and Technology, Rolla, MO 65409, USA}
\email{\mailto{sclark@mst.edu}}
%\email{sclark@mst.edu}
\urladdr{\url{http://web.mst.edu/\~sclark/}}
%\urladdr{http://web.mst.edu/\~{}sclark/}

\author[F.\ Gesztesy]{Fritz Gesztesy}
\address{Department of Mathematics,
University of Missouri, Columbia, MO 65211, USA}
\email{\mailto{gesztesyf@missouri.edu}}
%\email{gesztesyf@missouri.edu}
\urladdr{\url{http://www.math.missouri.edu/personnel/faculty/gesztesyf.html}}
%\urladdr{http://www.math.missouri.edu/personnel/faculty/gesztesyf.html}

\author[R.\ Nichols]{Roger Nichols}
\address{Mathematics Department, The University of Tennessee at Chattanooga, 415 EMCS Building, Dept. 6956, 615 McCallie Ave, Chattanooga, TN 37403, USA}
\email{\mailto{Roger-Nichols@utc.edu}}
%\email{Roger-Nichols@utc.edu}
\urladdr{\url{http://www.utc.edu/faculty/roger-nichols/}} 
%\urladdr{http://www.utc.edu/faculty/roger-nichols/}

\dedicatory{Dedicated with admiration to Ludwig Streit on the occasion of his 75th birthday}

\date{\today}
\subjclass[2010]{Primary 34B20, 34B24, 34C10; Secondary 34B27, 34L05, 34L40, 47A10, 47E05.}
\keywords{Matrix-valued Schr\"odinger operators, principal solutions, Weyl--Titchmarsh solutions, 
oscillation theory.}
\thanks{R.N. gratefully acknowledges support from an AMS--Simons Travel Grant.}
\thanks{In {\it Stochastic and Infinite Dimensional Analysis}, C.\ C.\ Bernido, 
M.\ V.\ Carpio-Bernido, M.\ Grothaus, T.\ Kuna, M.\ J.\ Oliveira, and J. L. da Silva (eds.), 
Trends in Mathematics, Birkh\"auser, to appear.}

%%%%%%%%%%%%
%%%%%%%%%%%%
\begin{abstract} 
The main objective of this paper is to identify principal solutions associated with 
Sturm--Liouville operators on arbitrary open intervals $(a,b) \subseteq \bbR$, as 
introduced by Leighton and Morse in the scalar context in 1936 and by Hartman 
in the matrix-valued situation in 1957, with Weyl--Titchmarsh solutions, as 
long as the underlying Sturm--Liouville differential expression is nonoscillatory (resp., 
disconjugate or bounded from below near an endpoint) and in the limit point case at 
the endpoint in question. In addition, we derive an explicit formula for 
Weyl--Titchmarsh functions in this case (the latter appears to be new in the matrix-valued 
context). 
\end{abstract}
%%%%%%%%%%%%
%%%%%%%%%%%%

\maketitle 

%\vspace*{-3mm} 
%{\scriptsize{\tableofcontents}}
%\normalsize

%%%%%%%%%%%%%%%%%%%%%%%%%%%%%%%%%%%%%%%%%%
%%%%%%%%%%%%%%%%%%%%%%%%%%%%%%%%%%%%%%%%%%
\section{Introduction}  \lb{s1}
%%%%%%%%%%%%%%%%%%%%%%%%%%%%%%%%%%%%%%%%%%
%%%%%%%%%%%%%%%%%%%%%%%%%%%%%%%%%%%%%%%%%%

We dedicate this paper to Ludwig Streit in great appreciation of the tremendous influence 
he exerted on all those who were permitted a glimpse at his boundless curiosity and approach to all   
aspects of science. We hope this modest contribution will create some joy for him. 

The main focus of this paper centers around principal and Weyl--Titchmarsh solutions 
for general Sturm--Liouville operators (associated with three coefficients) on arbitrary 
open intervals $(a,b) \subseteq \bbR$. We will discuss in great detail the case 
of scalar coefficients $p, q, r$ associated with the differential expression 
\begin{equation}
\ell =\f{1}{r}\bigg(-\f{d}{dx}p\f{d}{dx}+q\bigg), \quad 
-\infty\leq a<x<b\leq\infty,   \lb{1.1}
\end{equation}
and corresponding operator realizations in the Hilbert space $L^2((a,b);rdx)$, as well as 
the case of $m \times m$ matrix-valued coefficients $P, Q, R$, $m \in \bbN$, associated with the 
differential expresssion
\begin{equation}
L = R^{-1}\bigg(-\f{d}{dx}P\f{d}{dx} + Q\bigg), \quad 
-\infty\leq a<x<b\leq\infty,   \lb{1.2}
\end{equation}
and corresponding operator realizations in the Hilbert space $L^2((a,b);R dx; \bbC^m)$.

Focusing in this introduction for reasons of brevity exclusively on the right end point $b$, 
if $\ell$ is nonoscillatory at $b$, (real-valued) {\it principal solutions} $u_b(\lambda, \cdot \,)$ of 
$\ell u = \lambda u$, $\lambda \in \bbR$, are characterized by the condition that 
$u_b(\lambda, \cdot \,)$ does not vanish in a neighborhood $[c,b)$ of $b$ (with $c \in (a,b)$) 
and that
\begin{equation}
\int_c^b dx \, p(x)^{-1}u_b(\lambda,x)^{-2}=\infty.      \lb{1.3}
\end{equation}
As discussed in Lemma \ref{l2.7}, $u_b(\lambda, \cdot \,)$ is unique up to constant (possibly, 
$\lambda$-dependent) multiples and, in a certain sense (made precise in Lemma \ref{l2.7}), 
also characterized as the smallest (minimal) possible solution of $\ell u = \lambda u$ near 
the endpoint $b$.

In contrast to \eqref{1.3}, if $\ell$ is in the limit point case at $b$, {\it Weyl--Titchmarsh solutions} 
$\psi_+ (z, \cdot \,)$ of $\ell u = z u$, $z \in \bbC \backslash \bbR$, 
are characterized by the condition that for some (and hence for all) $c \in (a,b)$,
\begin{equation}
\psi_+ (z, \cdot \,) \in L^2((c,b);rdx) \quad z \in \bbC \backslash \bbR.      \lb{1.4} 
\end{equation} 
Again, $\psi_+ (z, \cdot \,)$ is unique up 
to constant (generally, $z$-dependent) multiples.

Our main result, Theorem \ref{t2.13} in Section \ref{s2}, then proves equality of these solutions 
(up to constant, possibly spectral parameter dependent multiples) under appropriate 
assumptions. More precisely, assuming $\ell$ to be nonoscillatory and in the limit point case 
at $b$, there exists $\lambda_b \in \bbR$, such that for all $\lambda < \lambda_b$, 
$x,x_0 \in (a,b)$, with $x, x_0$ beyond the last zero of 
$\psi_+ (\lambda, \cdot \,), u_b(\lambda,\cdot \,)$ (if any),  
\begin{equation}
\psi_+(\lambda,x) \psi_+ (\lambda, x_0)^{-1} = u_b(\lambda,x) u_b(\lambda,x_0)^{-1}.    \lb{1.5}
\end{equation}
Here, $\psi_+(\lambda,\,\cdot\,)$, $\lambda<\lambda_b$, denotes the extension of $\psi_+(z,\,\cdot\,)$, defined initially only for $z\in \bbC\backslash \bbR$, to real values $z<\lambda_b$.  This extension is permitted on the basis that $\ell$ is assumed to be nonoscillatory and in the limit point case at $b$  (cf.~Remark \ref{r2.12}).

We also recall Green's function formulas in terms of principal solutions and an explicit formula 
for the Weyl--Titchmarsh function at the end of Section \ref{s2}, supposing the underlying limit 
point assumptions on $\ell$. 

In Section \ref{s3}, the main new section in this paper, we prove the analogous results in the 
matrix-valued setting. We will be 
primarily concerned with self-conjugate solutions $U(\lambda, \cdot \,)$ of 
$LU = \lambda U$, $\lambda \in \bbR$, defined by the vanishing of the underlying 
$m \times m$ matrix-valued Wronskian,
\begin{equation}
W(U(\lambda, \cdot \,)^*, U(\lambda, \cdot \,)) = 0,   \quad \lambda \in \bbR.    \lb{1.6} 
\end{equation}
Focusing again exclusively on the endpoint $b$, a self-conjugate solution $U_b(\lambda, \cdot \,)$ 
of $L U = \lambda U$ that is invertible on $[c,b)$ for some $c \in (a,b)$ is called a 
{\it principal solution} of $L U = \lambda U$ at $b$ if 
\begin{equation}
\lim_{x \uparrow b} \bigg[\int_c^x dx' \, 
U_b(\lambda,x')^{-1} P(x')^{-1} \big[U_b(\lambda,x')^{-1}\big]^*\bigg]^{-1} = 0.    \lb{1.7}
\end{equation}   
Again, by Lemma \ref{l3.5}, $U_b(\lambda, \cdot \,)$ is unique up to right multiplication by 
invertible (possibly, $\lambda$-dependent) constant $m \times m$ matrices, and in a certain 
sense (detailed in Lemma \ref{l3.6}) it represents the smallest (minimal) solution of 
$LU = \lambda U$ near the endpoint $b$.

In analogy to \eqref{1.4}, if $L$ is in the limit point case at $b$, {\it Weyl--Titchmarsh solutions}  
$\Psi_+ (z, \cdot \,)$ of $L U = z U$, $z \in \bbC \backslash \bbR$, are then 
characterized by the condition that for some (and hence for all) $c \in (a,b)$, there exists an  
invertible $m \times m$ matrix-valued solution 
$\Psi_+(z, \cdot \,)$ of $L U = z U$ such that the $m \times m$ 
matrices 
\begin{equation}
\int_c^b dx \,  \Psi_+ (z,x)^* R(x) \Psi_+ (z,x), \quad  z \in \bbC \backslash \bbR,    \lb{1.8} 
\end{equation} 
exist. As in the context of principal solutions, $\Psi_+(z, \cdot \,)$ is unique up to right 
multiplication by (generally, $z$-dependent) invertible $m \times m$ matrices and it can be shown 
that $\Psi_+(z, \cdot \,)$ is self-conjugate. 

Our main result, Theorem \ref{t3.10} in Section \ref{s3}, once again proves equality of these 
solutions (up to right multiplication by possibly, spectral parameter dependent invertible $m \times m$ 
matrices) under appropriate assumptions. More precisely, assuming the existence of 
$\lambda_b \in \bbR$, such that $L - \lambda_b I$ is disconjugate on $[c,b)$ for all $c \in (a,b)$, 
and supposing $L$ to be in the limit point case at $b$, then for all $\lambda < \lambda_b$, 
$x,x_0 \in (a,b)$, with $x, x_0$ beyond the last zero of 
$\det_{\bbC^m}(\Psi_+ (\lambda, \cdot \,)), \det_{\bbC^m}(U_b(\lambda,\cdot \,))$ (if any),  
\begin{equation}
\Psi_+(\lambda,x) \Psi_+ (\lambda, x_0)^{-1} = U_b(\lambda,x) U_b(\lambda,x_0)^{-1}.    \lb{1.9}
\end{equation}

In addition, with the normalized $m \times m$ matrix-valued solutions $\Theta(z, \cdot \, , x_0)$ 
of $L U = z U$ defined by 
\begin{equation}
\Theta(z, x_0 , x_0) = I_m, \quad [P(x) \Theta'(z, x, x_0)](x_0)|_{x =x_0} = 0,   \lb{1.10} 
\end{equation}
we will show the following formula for the $m \times m$ matrix-valued Weyl--Titchmarsh function 
associated with $L$,
\begin{equation}
M_+(z,x_0) = - \lim_{x \uparrow b} \bigg[\int_{x_0}^x dx' \, \Theta(z,x',x_0)^{-1} P(x')^{-1} 
\big[\Theta({\ol z},x',x_0)^{-1}\big]^*\bigg]^{-1}, \quad z \in \bbC \backslash \bbR,    \lb{1.11} 
\end{equation}
assuming $L$ to be in the limit point case at $b$. If in addition, $Lu = \lambda_b u$ is 
disconjugate for some $\lambda_b \in \bbR$, then also 
\begin{equation}
M_+(\lambda,x_0) = - \bigg[\int_{x_0}^b dx' \, \Theta(\lambda,x',x_0)^{-1} P(x')^{-1} 
\big[\Theta(\lambda,x',x_0)^{-1}\big]^*\bigg]^{-1}, \quad \lambda < \lambda_b,     \lb{1.12} 
\end{equation} 
holds, and  
\begin{align} 
& \big(\xi, M_+(\lambda,x_0)^{-1} \eta\big)_{\bbC^m}= - \int_{x_0}^b dx' \, 
\big(\xi, \Theta(\lambda,x',x_0)^{-1} P(x')^{-1} 
\big[\Theta(\lambda,x',x_0)^{-1}\big]^* \eta\big)_{\bbC^m},   \no \\ 
& \hspace*{8.2cm} \lambda < \lambda_b, \; \xi, \eta \in \bbC^m,    \lb{1.13} 
\end{align}  
exists as a Lebesgue integral. Both formulas, \eqref{1.11} and \eqref{1.12}, are of independent 
interest and we know of no previous source that recorded them. 

Concluding this introduction, we briefly summarize some of the notation used in this paper. 
If $\cH$ is a separable complex Hilbert space the symbol $(\dott,\dott)_{\cH}$ denotes the 
scalar product in $\cH$ (linear in the second entry). If $T$ is a linear operator 
mapping (a subspace of) a Hilbert space into another, $\dom(T)$ denotes the
domain of $T$. The spectrum and resolvent set of a closed linear operator 
in $\cH$ will be denoted by $\sigma(\cdot)$ and $\rho(\cdot)$, respectively. The closure of 
a closable operator $S$ in $\cH$ is denoted by $\ol S$. 

The Banach spaces of bounded and compact linear operators on $\cH$ are
denoted by $\cB(\cH)$ and $\cB_\infty(\cH)$, respectively.   

The symbol $I_m$, $m \in \bbN$, represents the identity operator in $\bbC^m$. 
The set of $m \times m$ matrices with complex-valued (resp., real-valued) entries is 
abbreviated by $\bbC^{m \times m}$ (resp., $\bbR^{m \times m}$), and similarly, 
$L^s ((c,d); dx)^{m \times m}$ (resp., $L^s_{loc} ((c,d); dx)^{m \times m}$) denotes the set 
of $m \times m$ matrices with entries in $L^s((c,d); dx)$ (resp., $L^s_{loc}((c,d); dx)$), where 
$s>0$ and $a \leq c < d \leq b$. For notational simplicity, $I$ represents the identity operator in $L^2((a,b);rdx)$ and also in $L^2((a,b); rdx; \bbC^m)$. 

Finally, $\bbC_+$ (resp., $\bbC_-$) denotes the open complex upper (resp., lower) half-plane, and we will use the abbreviation ``a.e.'' for ``Lebesgue almost everywhere.''

%%%%%%%%%%%%%%%%%%%%%%%%%%%%%%
%%%%%%%%%%%%%%%%%%%%%%%%%%%%%%
\section{Basic Facts on Scalar Principal Solutions} \lb{s2}
%%%%%%%%%%%%%%%%%%%%%%%%%%%%%%
%%%%%%%%%%%%%%%%%%%%%%%%%%%%%%

In this preparatory section we recall some of the basic facts on oscillation theory 
with particular emphasis on principal solutions, a notion originally due to Leighton and Morse 
\cite{LM36}, in connection with scalar Sturm--Liouville operators on arbitrary open intervals 
$(a,b) \subseteq \bbR$.  

We start by summarizing a few key results in the one-dimensional
scalar case, whose extension to the matrix-valued context we are
particularly interested in.

Our basic hypothesis in this section will be the following (however, we emphasize that 
all results in this section have been proved under more general conditions on the 
coefficients $p, q$, and for more general differential expressions $\ell$, in \cite{EGNT13}).

%%%%%%%%%%%%%
\begin{hypothesis} \lb{h2.1} 
Let $-\infty\leq a<b\leq\infty$ and suppose that $p,q,r$ are
$($Lebesgue$\,)$ measurable on $(a,b)$, and that 
\begin{align}
\begin{split} 
& p>0, r>0 \text{ a.e.\ on $(a,b)$, $q$ is real-valued},     \\
& 1/p, q, r \in L^1_{loc} ((a,b); dx).      \lb{2.1}
\end{split} 
\end{align}
\end{hypothesis}
%%%%%%%%%%%%%

Given Hypothesis \ref{h2.1}, we consider the differential expression
\begin{equation}
\ell =\f{1}{r}\bigg(-\f{d}{dx}p\f{d}{dx}+q\bigg), \quad 
-\infty\leq a<x<b\leq\infty,   \lb{2.2}
\end{equation}
and define
the minimal operator $T_{min}$ and maximal operator $T_{max}$ in $L^2((a,b);rdx)$ 
associated with $\ell$ by 
\begin{align}
& T_{min}u=\ell u,   \no \\
& u\in \dom (T_{min}) = \big\{v \in L^2((a,b);rdx)\,\big|\, v, 
pv' \in AC_{loc} ((a,b));     \lb{2.5} \\
& \hspace*{3cm}  \supp\,(v)\subset (a,b) \, \text{compact}; \, \ell v \in L^2((a,b);rdx)\big\},  \no \\
& T_{max}u=\ell u,    \lb{2.6} \\ 
& u\in \dom (T_{max})= \big\{v \in L^2((a,b);rdx)\,\big|\, v, 
pv'\in AC_{loc} ((a,b)); \, \ell v \in L^2((a,b);rdx)\big\},  \no 
\end{align}
respectively. Here $AC_{loc}((a,b))$ denotes the set of locally absolutely continuous
functions on $(a,b)$.  

Then $T_{min}$ is densely defined and 
\cite[p.\ 64, 88]{Na68}
\begin{equation}
{T_{min}}^*=T_{max}, \quad T_{max}^* = \ol{T_{min}}. \lb{2.7}
\end{equation}

%%%%%%%%%%%%%%
\begin{remark}\lb{r2.2} 
$(i)$ In obvious notation, we will occasionally write $[p(x_0) u'(x_0)]$ for the quasi-derivative 
$p u'|_{x = x_0}$. \\[.5mm] 
$(ii)$ In the following we will frequently invoke 
solutions $u(z,\cdot\,)$ of $\ell u = z u$ for some $z\in\bbC$. Such 
solutions are always assumed to be distributional solutions, that is, we
tacitly assume 
\begin{equation}
u(z,\cdot\,), \, p(\,\cdot\,)u'(z,\cdot\,)\in AC_{loc} ((a,b)) \lb{2.7a}
\end{equation}
in such a case. 
\end{remark}
%%%%%%%%%%%%%%

%%%%%%%%%%%%%%
\begin{lemma} [cf., e.g., \cite{GZ91}] \lb{l2.3} 
Assume Hypothesis \ref{h2.1}. \\[1mm]
$(i)$ Suppose $\ell u =\lambda u$ for some $\lambda\in\bbR$ 
with $u(\lambda,\cdot \,) \geq 0$ $($$u(\lambda,\cdot \,)\not\equiv 0$$)$ on $(a,b)$. Then 
$u(\lambda,\cdot \,) > 0$ on 
$(a,b)$. \\[1mm]
$(ii)$ $($Harnack's inequality$\,)$. Let $\cK\subset (a,b)$ be compact and
$\lambda\in\bbR$.  Then there exists a $C_{\cK,\lambda} >0$ such that
for all solutions $u(\lambda,\cdot \,)\geq 0$ satisfying $\ell u = \lambda u$, one has
\begin{equation}
\sup_{x\in\cK}(u(\lambda,x))\leq C_{\cK,\lambda} \inf_{x\in\cK}(u(\lambda,x)).    \lb{2.8}
\end{equation}
\end{lemma}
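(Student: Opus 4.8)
The plan is to establish part~(i) by a direct ODE uniqueness/sign argument, and then to derive Harnack's inequality (ii) from (i) by a compactness argument over the two-dimensional solution space. For part~(i), I would rewrite $\ell u = \lambda u$ as the first-order linear system $u' = p^{-1}[pu']$, $[pu']' = (q - \lambda r)u$ for the (locally absolutely continuous) pair $(u, pu')$; its coefficient matrix has entries in $L^1_{loc}((a,b);dx)$ by Hypothesis~\ref{h2.1}, so Carath\'eodory uniqueness applies: if $u(x_0) = 0$ and $[p(x_0)u'(x_0)] = 0$ for some $x_0 \in (a,b)$, then $u \equiv 0$. Hence, assuming $u \geq 0$, $u \not\equiv 0$, at any would-be zero $x_0$ of $u$ the quasi-derivative $c := [p(x_0)u'(x_0)]$ is nonzero, and since $pu'$ is continuous there is $\delta > 0$ with $[p(t)u'(t)]$ of constant sign $\sgn(c)$ on $(x_0 - \delta, x_0 + \delta)$. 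Integrating $u(x) = \int_{x_0}^x p(t)^{-1}[p(t)u'(t)]\,dt$ (using $1/p \in L^1_{loc}((a,b);dx)$ and $p > 0$ a.e.) then gives $u(x) < 0$ for $x \in (x_0 - \delta, x_0)$ when $c > 0$, resp.\ for $x \in (x_0, x_0 + \delta)$ when $c < 0$, contradicting $u \geq 0$. Thus $u$ has no zero in $(a,b)$, and being continuous and $\geq 0$ it is $> 0$ throughout.

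For part~(ii), I would fix $\lambda \in \bbR$ and a compact $\cK \subset (a,b)$ (the inequality being trivial for $u \equiv 0$). Picking any $x_* \in (a,b)$, the map $u \mapsto (u(x_*), [p(x_*)u'(x_*)])$ is a linear isomorphism from the real solution space of $\ell u = \lambda u$ onto $\bbR^2$, under which the convex cone of nonnegative solutions maps to a closed convex cone in $\bbR^2$; let $\Sigma$ be its (compact) intersection with the unit circle, and for $\sigma \in \Sigma$ let $u_\sigma$ be the corresponding solution. Each $u_\sigma$ is $\geq 0$ and $\not\equiv 0$, hence $u_\sigma > 0$ on $(a,b)$ by part~(i), so $\inf_{x \in \cK} u_\sigma(x) > 0$. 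By continuous (indeed linear) dependence of solutions on their Cauchy data, uniformly on $\cK$ — valid for Carath\'eodory systems under Hypothesis~\ref{h2.1} via a Gronwall bound — the maps $\sigma \mapsto \sup_{\cK} u_\sigma$ and $\sigma \mapsto \inf_{\cK} u_\sigma$ are continuous and strictly positive on the compact set $\Sigma$, so $F(\sigma) := (\sup_{\cK} u_\sigma)/(\inf_{\cK} u_\sigma)$ attains a finite maximum $C_{\cK,\lambda} > 0$. Since every nonzero nonnegative solution is a positive multiple of some $u_\sigma$ and $F$ is scale invariant, \eqref{2.8} follows.

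The ODE prerequisites (existence, uniqueness, and continuous dependence for the first-order system with $L^1_{loc}$ coefficients) are entirely standard and would simply be quoted. The one point requiring genuine care is the sign argument in part~(i): a zero $x_0$ of a nonnegative solution need \emph{not} be a zero of the quasi-derivative $pu'$, so one cannot argue as for an interior minimum of a $C^1$ function; instead the contradiction must be extracted by one-sided integration of $u' = p^{-1}[pu']$ on the side of $x_0$ dictated by $\sgn([p(x_0)u'(x_0)])$, using $1/p \in L^1_{loc}$ and $p>0$ a.e.\ to keep the resulting integral strictly signed. The remainder is bookkeeping.
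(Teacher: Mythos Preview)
Your proof is correct. The paper itself does not supply a proof of Lemma~\ref{l2.3}; it simply records the statement with a reference to \cite{GZ91} (and later remarks that all of Lemma~\ref{l2.3} through Theorem~\ref{t2.9} is treated in detail there and in \cite{EGNT13}). So there is no ``paper's own proof'' to compare against.

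That said, your argument is exactly the standard one. For part~(i), the reduction to the first-order Carath\'eodory system and the one-sided integration of $u' = p^{-1}[pu']$ near a putative zero is the right way to handle the sign contradiction under Hypothesis~\ref{h2.1}; you correctly noted that one cannot argue via $u'(x_0)=0$ at an interior minimum since only $pu'$, not $u'$, is continuous. For part~(ii), your normalization-plus-compactness argument on the two-dimensional solution space is precisely how such Harnack inequalities are proved in the ODE setting (this is essentially the approach in \cite{GZ91}): closedness of the nonnegative cone follows from uniform-on-compacta convergence, strict positivity of $\inf_{\cK} u_\sigma$ from part~(i), and continuity of the ratio from linear dependence on Cauchy data. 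One minor remark: you should explicitly note that if the cone of nonnegative solutions is $\{0\}$ (i.e., $\Sigma=\emptyset$), the inequality is vacuous, so the compactness argument only needs to run when $\Sigma\neq\emptyset$; you implicitly covered this by disposing of $u\equiv 0$ first, but it is worth one sentence.
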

%%%%%%%%%%%%%%

%%%%%%%%%%%%%%
\begin{definition} \lb{d2.4}
Assume Hypothesis \ref{h2.1}. \\[1mm] 
$(i)$ Fix $c\in (a,b)$. Then $\ell$ is
called {\it nonoscillatory} near $a$ $($resp., $b$$)$ for some $\lambda\in\bbR$
if and only if every solution $u(\lambda,\cdot \,)$ of 
$\ell u = \lambda u$ has finitely many
zeros in $(a,c)$ $($resp., $(c,b)$$)$. Otherwise, $\ell$ is called {\it oscillatory}
near $a$ $($resp., $b$$)$. \\[1mm] 
$(ii)$ Let $\lambda_0 \in \bbR$. Then $T_{min}$ is bounded from below by $\lambda_0$, 
and one writes $T_{min} \geq \lambda_0 I$, if 
\begin{equation} 
(u, [T_{min} - \lambda_0 I]u)_{L^2((a,b);rdx)}\geq 0, \quad u \in \dom(T_{min}).
\end{equation}
\end{definition}
%%%%%%%%%%%%%%

The following is a key result. 

%%%%%%%%%%%%%%
\begin{theorem} [\cite{Ha82}, \cite{Ka78}, \cite{Re51}, 
\cite{Sc01}] \lb{t2.5} 
Assume Hypothesis \ref{h2.1}. Then the following assertions are
equivalent: \\[1mm] 
$(i)$ $T_{min}$ $($and hence any symmetric extension of $T_{min}\,)$
is bounded from below. \\[1mm] 
$(ii)$ There exists a $\lambda_0\in\bbR$ such that $\ell$ is
nonoscillatory near $a$ and $b$ for all $\lambda < \lambda_0$. \\[1mm]
$(iii)$ For fixed $c \in (a,b)$, there exists a $\lambda_0\in\bbR$ such that for all
$\lambda<\lambda_0$, $\ell u = \lambda u$ has solutions
$u_a(\lambda,\cdot\,)>0$,
$\hat u_a(\lambda,\cdot\,)>0$ in a neighborhood $(a,c]$ of $a$, and solutions
$u_b(\lambda,\cdot\,)>0$, $\hat u_b(\lambda,\cdot\,)>0$ in a neighborhood $[c,b)$ of
$b$, such that 
\begin{align}
&W(u_a (\lambda,\cdot\,),\hat u_a (\lambda,\cdot\,))=1,
\quad u_a (\lambda,x)=\oh(\hat u_a (\lambda,x))
\text{ as $x\downarrow a$,} \lb{2.9} \\
&W(u_b (\lambda,\cdot\,),\hat u_b (\lambda,\cdot\,))\,=1,
\quad u_b (\lambda,x)\,=\oh(\hat u_b (\lambda,x))
\text{ as $x\uparrow b$,} \lb{2.9a} \\
&\int_a^c dx \, p(x)^{-1}u_a(\lambda,x)^{-2}=\int_c^b dx \, 
p(x)^{-1}u_b(\lambda,x)^{-2}=\infty,  \lb{2.10} \\
&\int_a^c dx \, p(x)^{-1}{\hat u_a(\lambda,x)}^{-2}<\infty, \quad 
\int_c^b dx \, p(x)^{-1}{\hat u_b(\lambda,x)}^{-2}<\infty. \lb{2.11}
\end{align}
\end{theorem}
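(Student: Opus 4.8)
The plan is to route all three equivalences through a single pivotal lemma: for fixed $\lambda\in\bbR$, the operator inequality $T_{min}\geq\lambda I$ is equivalent to disconjugacy of $\ell-\lambda$ on $(a,b)$ (every nontrivial solution of $\ell u=\lambda u$ has at most one zero in $(a,b)$), and these are in turn equivalent to the existence of a solution $u(\lambda,\cdot\,)$ of $\ell u=\lambda u$ with $u(\lambda,\cdot\,)>0$ throughout $(a,b)$. I would prove ``positive solution $\Rightarrow$ $T_{min}\geq\lambda I$'' by the ground-state (Jacobi) substitution $f=u(\lambda,\cdot\,)g$ for $f\in\dom(T_{min})$: integration by parts using $[p u']'=(q-\lambda r)u$ collapses $(f,[T_{min}-\lambda I]f)_{L^2((a,b);rdx)}$ to $\int_a^b dx\, p(x)u(\lambda,x)^2|g'(x)|^2\geq0$, and since the boundary terms produced by these integrations by parts vanish also on any symmetric extension of $T_{min}$, the same computation yields the parenthetical statement in $(i)$. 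I would prove ``$T_{min}\geq\lambda I$ $\Rightarrow$ disconjugacy'' by the classical Jacobi argument: a solution with two zeros $x_1<x_2$, truncated to $[x_1,x_2]$, is a nontrivial null vector of the quadratic form, which closedness of the form would force into $\dom(T_{min})$ --- impossible because its quasi-derivative $pu'$ then has jumps at $x_1,x_2$. Finally, ``disconjugacy $\Rightarrow$ positive solution'' comes from realizing the principal solution at $a$ as a limit of solutions vanishing at points $x_n\downarrow a$ (disconjugacy keeps these nonnegative on $(x_n,b)$), the limit being strictly positive by Lemma \ref{l2.3}$(i)$.

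Granting the lemma, $(i)\Rightarrow(ii)$ is immediate: $T_{min}\geq\lambda_0 I$ forces $T_{min}-\lambda I\geq(\lambda_0-\lambda)I\geq0$, hence a globally positive solution of $\ell u=\lambda u$ for every $\lambda<\lambda_0$, and Sturm's separation theorem then limits every solution to at most one zero in $(a,b)$ --- in particular $\ell$ is nonoscillatory near $a$ and near $b$. For $(i)\Rightarrow(iii)$ the same globally positive solution, restricted to $[c,b)$, is positive there; replacing it by $v(x)\int_x^b dx'\, p(x')^{-1}v(x')^{-2}$ when this integral converges produces the principal solution $u_b(\lambda,\cdot\,)$, and reduction of order yields the nonprincipal partner $\hat u_b(\lambda,x)=u_b(\lambda,x)\big(1+\int_c^x dx'\, p(x')^{-1}u_b(\lambda,x')^{-2}\big)$, for which $W(u_b(\lambda,\cdot\,),\hat u_b(\lambda,\cdot\,))=1$, $\int_c^b dx\, p(x)^{-1}\hat u_b(\lambda,x)^{-2}<\infty$, and $u_b(\lambda,x)=\oh(\hat u_b(\lambda,x))$ as $x\uparrow b$ all follow by direct computation; the endpoint $a$ is treated symmetrically, which supplies \eqref{2.9}--\eqref{2.11}. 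Since $(iii)$ trivially implies $(ii)$ (a positive solution near an endpoint is nonoscillatory there), everything now reduces to the single implication $(ii)\Rightarrow(i)$.

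For $(ii)\Rightarrow(i)$ the natural strategy is a localization/descent argument. Nonoscillation near $b$ for all $\lambda<\lambda_0$ should yield, once $\lambda$ is taken sufficiently negative, a positive solution of $\ell u=\lambda u$ on a neighborhood $[c_b,b)$, hence --- by the ground-state substitution localized at $b$ --- the bound $\int_{c_b}^b dx\,\big(p|f'|^2+(q-\lambda r)|f|^2\big)\geq0$ for all $f$ supported in $(c_b,b)$; symmetrically one gets such a bound on a neighborhood $(a,c_a]$ of $a$; on the remaining compact interval the form is bounded below for trivial reasons; and a partition of unity then glues the three estimates into a lower bound for the quadratic form of $T_{min}$ on $C_0^\infty((a,b))$, i.e., into semiboundedness. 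I expect this to be the main obstacle: under the bare $L^1_{loc}$ hypotheses on $p$ and $q$, neither the localization error $\sum_j p|\chi_j'|^2$ nor the estimate on the compact middle interval is automatically controlled (the coefficient $p$ may be unbounded and $q$ locally unbounded), so the gluing demands the delicate low-regularity bookkeeping carried out in \cite{Ha82, Ka78, Re51, Sc01}. An equivalent alternative route is a monotone argument in $\lambda$: track how the first zeros on either side of an interior point, for the Neumann-normalized solution of $\ell u=\lambda u$, recede toward $a$ and $b$ as $\lambda\downarrow-\infty$, and show that both reach the endpoints at some finite $\lambda$; the difficulty then resurfaces as the need to rule out a zero ``getting stuck'' in the interior, which is governed by the same low-regularity estimate.
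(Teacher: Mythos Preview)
The paper does not prove this theorem; it is stated with attribution to \cite{Ha82}, \cite{Ka78}, \cite{Re51}, \cite{Sc01} and no argument appears in the text. The only related content is the remark following Theorem~\ref{t2.8} that the implication ``positive solution $\Rightarrow T_{min}\geq\lambda_0 I$'' goes through Jacobi's factorization identity \eqref{2.18}, which is precisely your ground-state substitution. So there is nothing to compare against beyond noting that your pivotal lemma is the paper's Theorem~\ref{t2.8}, and your route to one half of it matches the single hint the paper gives.

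Your outline follows the standard approach in the cited references and is essentially correct. Two points of execution deserve tightening. First, your justification of the parenthetical ``(and hence any symmetric extension of $T_{min}$)'' via vanishing boundary terms is not right: functions in the domain of a general symmetric extension need not have compact support, so the Jacobi integration by parts does not close up. The correct reason is that $\overline{T_{min}}$ has finite deficiency indices (at most $2$), so any self-adjoint extension differs from the Friedrichs extension by a finite-rank resolvent perturbation, which preserves semiboundedness (with a possibly smaller lower bound). Second, in ``$T_{min}\geq\lambda I\Rightarrow$ disconjugacy'' the truncated solution lies in the \emph{form} domain, not in $\dom(T_{min})$, and a null vector of a nonnegative form is not automatically forced into the operator domain; the clean contradiction is obtained by slightly enlarging the interval $[x_1,x_2]$ to push the Dirichlet ground-state energy strictly below $\lambda$. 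You are right that $(ii)\Rightarrow(i)$ is the delicate step under bare $L^1_{loc}$ hypotheses; the monotone-in-$\lambda$ argument you sketch as an alternative is close to what \cite{Re51} and \cite{Ka78} actually carry out.
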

%%%%%%%%%%%%%%

Here
\begin{equation}
W(u,v)(x) = u(x)(pv')(x) - (pu')(x)v(x), \quad x\in (a,b),    \lb{2.11a}
\end{equation}
denotes the Wronskian of $u$ and $v$, assuming $u, (pu'), v, (pv') \in C((a,b))$. In 
particular, if $\ell u_j = z_j u_j$, $z_j \in \bbC$, then 
\begin{equation}
\f{d}{dx} W(u_1(z_1,x), u_2(z_2,x)) = (z_1 - z_2) r(x) u_1(z_1,x) u_2(z_2,x), \quad x \in (a,b).   \lb{2.11b} 
\end{equation}

%%%%%%%%%%%%%% 
\begin{definition} \lb{d2.6}
Assume Hypothesis \ref{h2.1} and let $\lambda\in\bbR$. Then
$u_a(\lambda,\cdot\,)$ $($resp., $u_b(\lambda,\cdot\,)$$)$ in Theorem
\ref{t2.5}\,$(iii)$ is called a {\it principal} $($or {\it minimal}\,$)$
solution of $\ell u=\lambda u$ at $a$ $($resp., $b$$)$. A solution 
$\ti u_a(\lambda,\cdot\,)$ $($resp., $\ti u_b(\lambda,\cdot\,)$$)$ of $\ell
u=\lambda u$ linearly independent of $u_a(\lambda,\cdot\,)$ $($resp.,
$u_b(\lambda,\cdot\,)$$)$ is called {\it nonprincipal} at $a$ $($resp., $b$$)$.
\end{definition}
%%%%%%%%%%%%%%

Principal and nonprincipal solutions are well-defined due to  
Lemma \ref{l2.7}\,$(i)$ below. 

%%%%%%%%%%%%%%%
\begin{lemma} [\cite{Ha82}] \lb{l2.7} Assume Hypothesis \ref{h2.1}. \\[1mm]
$(i)$ $u_a(\lambda,\cdot\,)$ and $u_b(\lambda,\cdot\,)$ in Theorem
\ref{t2.5}\,$(iii)$ are unique up to constant multiples. Moreover,
$u_a(\lambda,\cdot\,)$ and $u_b(\lambda,\cdot\,)$ are minimal solutions of
$\ell u=\lambda u$ in the sense that 
\begin{align}
u(\lambda,x)^{-1} u_a(\lambda,x)&=\oh(1) \text{ as $x\downarrow a$,} 
\lb{2.12} \\ 
u(\lambda,x)^{-1} u_b(\lambda,x)&=\oh(1) \text{ as $x\uparrow b$,} \lb{2.12a}
\end{align}
for any other solution $u(\lambda,\cdot\,)$ of $\ell u=\lambda u$
$($which is positive near $a$, resp., $b$$)$ with
$W(u_a(\lambda,\cdot\,),u(\lambda,\cdot\,))\neq 0$, respectively, 
$W(u_b(\lambda,\cdot\,),u(\lambda,\cdot\,))\neq 0$. \\[1mm]
$(ii)$ Let $u(\lambda,\cdot\,)$ be any positive solution of $\ell u=\lambda
u$ near $a$ $($resp., $b$$)$. Then for $c_1>a$ $($resp., $c_2<b$$)$
sufficiently close to $a$ $($resp., $b$$)$, 
\begin{align}
\hat u_a(\lambda,x)&=u(\lambda,x)\int_x^{c_1}dx' \, 
p(x')^{-1}u(\lambda,x')^{-2} \lb{2.13} \\
\bigg(\text{resp., }\hat u_b(\lambda,x)&=u(\lambda,x)\int^x_{c_2}dx' \, 
p(x')^{-1}u(\lambda,x')^{-2}\bigg) \lb{2.14} 
\end{align} 
is a nonprincipal solution of $\ell u=\lambda u$ at $a$ $($resp.,
$b$$)$. If $\hat u_a(\lambda,\cdot\,)$ $($resp., $\hat
u_b(\lambda,\cdot\,))$ is a nonprincipal solution of $\ell u=\lambda u$
at $a$ $($resp., $b$$)$ then 
\begin{align}
u_a(\lambda,x)&=\hat u_a(\lambda,x)\int_a^{x}dx' \, 
p(x')^{-1}{\hat u_a(\lambda,x')}^{-2} \lb{2.15} \\
\bigg(\text{resp., } u_b(\lambda,x)&=\hat u_b(\lambda,x)\int^b_{x}dx' \, 
p(x')^{-1}{\hat u_b(\lambda,x')}^{-2}\bigg) \lb{2.16} 
\end{align} 
is principal at $a$ $($resp., $b$$)$.
\end{lemma}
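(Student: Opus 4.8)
The plan is to deduce every assertion from one tool --- the classical reduction-of-order (Liouville) identity --- together with the dichotomy it forces on the endpoint behavior of integrals $\int_{c_0}^b dx\, p(x)^{-1}u(\lambda,x)^{-2}$. I will argue at the endpoint $b$ only; the endpoint $a$ is entirely symmetric, with the integrals $\int_x^{c_1}$ and $\int_a^x$ replacing $\int_{c_2}^x$ and $\int_x^b$. Fix $\lambda$ for which the principal solution $u_b(\lambda,\cdot\,)$ of Theorem \ref{t2.5}\,$(iii)$ exists, so that $u_b(\lambda,\cdot\,)>0$ on $[c,b)$ and $\int_c^b dx\, p(x)^{-1}u_b(\lambda,x)^{-2}=\infty$ by \eqref{2.10}. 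The tool: if $u(\lambda,\cdot\,)$ solves $\ell u=\lambda u$ and is positive on some $[c_0,b)$ and $v(\lambda,\cdot\,)$ is any solution of $\ell v=\lambda v$, then, since $\tfrac{d}{dx}\big(v(\lambda,\cdot\,)/u(\lambda,\cdot\,)\big)=W(u(\lambda,\cdot\,),v(\lambda,\cdot\,))\,p(\,\cdot\,)^{-1}u(\lambda,\cdot\,)^{-2}$ and $W(u(\lambda,\cdot\,),v(\lambda,\cdot\,))$ is constant on $(a,b)$ by \eqref{2.11b}, one gets $v(\lambda,x)=u(\lambda,x)\big[C+W(u(\lambda,\cdot\,),v(\lambda,\cdot\,))\int_{c_0}^x dx'\, p(x')^{-1}u(\lambda,x')^{-2}\big]$ on $[c_0,b)$ for a suitable constant $C$. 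Taking $u=u_b(\lambda,\cdot\,)$, writing $H(x):=\int_c^x dx'\, p(x')^{-1}u_b(\lambda,x')^{-2}$ (so $H(x)\to\infty$ as $x\uparrow b$), and using the elementary antiderivative $\int H'(x)(C+kH(x))^{-2}\,dx=-[k(C+kH(x))]^{-1}$ gives the dichotomy I use throughout: a solution $u(\lambda,\cdot\,)$ that is positive on $[c_0,b)$ has $\int_{c_0}^b dx\, p(x)^{-1}u(\lambda,x)^{-2}=\infty$ exactly when $W(u_b(\lambda,\cdot\,),u(\lambda,\cdot\,))=0$, i.e., when $u(\lambda,\cdot\,)$ is a constant multiple of $u_b(\lambda,\cdot\,)$, and the integral is finite otherwise.

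For part $(i)$: if $\ti u_b(\lambda,\cdot\,)$ is another solution meeting the requirements of Theorem \ref{t2.5}\,$(iii)$ at $b$ (positive near $b$ and $\int_c^b dx\, p(x)^{-1}\ti u_b(\lambda,x)^{-2}=\infty$), the dichotomy forces $W(u_b(\lambda,\cdot\,),\ti u_b(\lambda,\cdot\,))=0$, so $\ti u_b(\lambda,\cdot\,)$ is a constant multiple of $u_b(\lambda,\cdot\,)$; this is the claimed uniqueness. The minimality relation \eqref{2.12a} drops out of the reduction-of-order formula itself: if $u(\lambda,\cdot\,)$ is positive near $b$ with $k:=W(u_b(\lambda,\cdot\,),u(\lambda,\cdot\,))\neq 0$, then $u(\lambda,x)/u_b(\lambda,x)=C+kH(x)\to\pm\infty$, hence $u(\lambda,x)^{-1}u_b(\lambda,x)=(C+kH(x))^{-1}=\oh(1)$ as $x\uparrow b$.

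For part $(ii)$: fix $c_2\in[c,b)$ and set $G(x):=\int_{c_2}^x dx'\, p(x')^{-1}u(\lambda,x')^{-2}$. A direct differentiation of $\hat u_b(\lambda,x)=u(\lambda,x)G(x)$ shows $\hat u_b(\lambda,\cdot\,)$ solves $\ell u=\lambda u$ on $[c_2,b)$, with $W(u(\lambda,\cdot\,),\hat u_b(\lambda,\cdot\,))=1$ and, crucially, $W(u_b(\lambda,\cdot\,),\hat u_b(\lambda,\cdot\,))=W(u_b(\lambda,\cdot\,),u(\lambda,\cdot\,))\,G(x)+u_b(\lambda,x)/u(\lambda,x)$ on $[c_2,b)$; since the left-hand side is constant, evaluating at $x=c_2$ (where $G(c_2)=0$) gives $W(u_b(\lambda,\cdot\,),\hat u_b(\lambda,\cdot\,))=u_b(\lambda,c_2)/u(\lambda,c_2)\neq 0$, so $\hat u_b(\lambda,\cdot\,)$ is linearly independent of $u_b(\lambda,\cdot\,)$, i.e., nonprincipal at $b$, proving \eqref{2.14}. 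Conversely, let $\hat u_b(\lambda,\cdot\,)$ be any nonprincipal solution; then $k:=W(u_b(\lambda,\cdot\,),\hat u_b(\lambda,\cdot\,))\neq 0$ and reduction of order gives $\hat u_b(\lambda,x)=u_b(\lambda,x)[C+kH(x)]$, which, because $H(x)\to\infty$, is eventually of one sign near $b$; after replacing $\hat u_b(\lambda,\cdot\,)$ by its negative if necessary we may assume $\hat u_b(\lambda,\cdot\,)>0$ near $b$, forcing $k>0$ and $C+kH(x)>0$ there. The dichotomy then gives $\int_x^b dx'\, p(x')^{-1}\hat u_b(\lambda,x')^{-2}<\infty$, and the antiderivative above evaluates it to $[k(C+kH(x))]^{-1}$; hence $\hat u_b(\lambda,x)\int_x^b dx'\, p(x')^{-1}\hat u_b(\lambda,x')^{-2}=u_b(\lambda,x)(C+kH(x))[k(C+kH(x))]^{-1}=u_b(\lambda,x)/k$, a positive constant multiple of $u_b(\lambda,\cdot\,)$, hence principal at $b$, which is \eqref{2.16}.

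I do not expect a genuinely hard step: once Theorem \ref{t2.5} supplies the principal solution $u_b(\lambda,\cdot\,)$ and the divergence \eqref{2.10}, everything reduces to reduction-of-order bookkeeping. The points demanding care are (a) keeping all solutions in play positive on one common interval $[c_0,b)$ near $b$ so that quotients are defined and unambiguously signed --- this is the role of choosing the base point $c_2$ (or $c_1$) close enough to the endpoint --- and (b) confirming that each endpoint integral converges or diverges precisely as stated, which is dispatched uniformly by the single antiderivative $\int H'(x)(C+kH(x))^{-2}\,dx=-[k(C+kH(x))]^{-1}$ together with $H(x)\to\infty$.
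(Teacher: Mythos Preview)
Your proof is correct. The paper does not actually supply its own proof of this lemma --- it simply attributes the result to Hartman \cite{Ha82} and moves on --- so there is no in-paper argument to compare against. That said, your approach via the reduction-of-order identity $(v/u)' = W(u,v)\,p^{-1}u^{-2}$ and the resulting convergence/divergence dichotomy for $\int p^{-1}u^{-2}$ is precisely the classical argument one finds in Hartman's Chapter~XI, so you have reconstructed the intended proof. The bookkeeping is clean: the uniqueness in $(i)$ follows immediately from the dichotomy, minimality \eqref{2.12a} from $H(x)\to\infty$, and in $(ii)$ your evaluation of $W(u_b,\hat u_b)$ at $x=c_2$ to get $u_b(c_2)/u(c_2)\neq 0$ is the right way to certify nonprincipality, while the explicit antiderivative $-[k(C+kH)]^{-1}$ closes \eqref{2.16} exactly. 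One cosmetic remark: in \eqref{2.16} you obtain $u_b/k$ rather than $u_b$ itself, but since the lemma only asserts the expression is \emph{a} principal solution (and part $(i)$ has just shown these are determined only up to constant multiples), this is exactly what is required.
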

%%%%%%%%%%%%

The following two theorems describe a fundamental link between spectral
theory and non-oscillation results.

%%%%%%%%%%%%%% 
\begin{theorem} [\cite{Ha82}] \lb{t2.8} 
Assume Hypothesis \ref{h2.1} and let $\lambda_0\in\bbR$. Then the
following assertions are equivalent: \\[1mm]
$(i)$ $T_{min} \geq \lambda_0I$. \\[1mm]
$(ii)$ There exists a positive $($distributional\,$)$ solution $u>0$ of
$\ell v=\lambda_0 v$ on $(a,b)$.
\end{theorem}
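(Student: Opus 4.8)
The plan is to prove the two implications separately; essentially all the work lies in $(i)\Rightarrow(ii)$. For $(ii)\Rightarrow(i)$ I would use a ground-state (Picone-type) substitution. Let $u>0$ solve $\ell v=\lambda_0 v$ on $(a,b)$ and let $f\in\dom(T_{min})$ be real-valued. Put $h:=f/u$; since $\supp(f)$ is a compact subset of $(a,b)$ and $u>0$ there, $h$ is compactly supported in $(a,b)$ and $pu^2h' = u(pf')-f(pu')\in AC_{loc}((a,b))$. Using $-(pu')'+qu-\lambda_0 r u=0$ one obtains the pointwise identity
\[
-(pf')' + qf - \lambda_0 r f = -\frac{1}{u}\big(pu^2h'\big)' \quad\text{a.e.\ on }(a,b),
\]
so, after one integration by parts (the boundary terms vanishing as $h$ is $0$ near $a$ and $b$),
\[
\big(f,(T_{min}-\lambda_0 I)f\big)_{L^2((a,b);rdx)} = \int_a^b f\big[-(pf')'+qf-\lambda_0 rf\big]\,dx = \int_a^b p(x)\,u(x)^2\,h'(x)^2\,dx \geq 0 .
\]
A complex $f\in\dom(T_{min})$ is split into $\Re f$ and $\Im f$ (both again in $\dom(T_{min})$ since $q,r$ are real-valued), and the symmetry of $T_{min}-\lambda_0 I$ reduces matters to the real case. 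Hence $T_{min}\geq\lambda_0 I$.

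For $(i)\Rightarrow(ii)$ I would proceed in two steps, the first being to show that $T_{min}\geq\lambda_0 I$ forces $\ell-\lambda_0$ to be disconjugate on $(a,b)$, i.e., no nontrivial solution of $\ell u=\lambda_0 u$ has two zeros in $(a,b)$. If $u$ were such a solution with zeros at $\alpha<\beta$ in $(a,b)$, then $f_0:=u\,\chi_{[\alpha,\beta]}$ (extended by $0$) is continuous, compactly supported in $(a,b)$, and lies in the form domain of the Friedrichs extension of the (now semibounded) operator $T_{min}$; integrating by parts gives $\int_a^b\big(p|f_0'|^2+q|f_0|^2-\lambda_0 r|f_0|^2\big)\,dx = \big[u\,(pu')\big]_\alpha^\beta = 0$. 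Since $T_{min}\geq\lambda_0 I$ makes this (nonnegative) form vanish precisely at a minimizer, a first variation against arbitrary $\psi\in C_0^\infty((a,b))$ shows $f_0$ solves $\ell v=\lambda_0 v$ distributionally on all of $(a,b)$; but $f_0$ vanishes on $(a,\alpha)$, so $f_0\equiv 0$ by the uniqueness theorem for the initial value problem — a contradiction.

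In the second step I would manufacture the positive solution from disconjugacy. Fix $c\in(a,b)$ and let $\theta,\phi$ solve $\ell u=\lambda_0 u$ with $\theta(c)=1$, $(p\theta')(c)=0$, $\phi(c)=0$, $(p\phi')(c)=1$; disconjugacy gives $\phi>0$ on $(c,b)$ and $\phi<0$ on $(a,c)$. For $s\in(c,b)$ set $u_s:=\theta+m_s\phi$ with $m_s:=-\theta(s)/\phi(s)$; then $u_s(c)=1$, $u_s(s)=0$, and since $s$ is (by disconjugacy) the only zero of $u_s$ in $(a,b)$, $u_s>0$ on $(a,s)$ and $(pu_s')(s)<0$. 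The Wronskian of two solutions of $\ell u=\lambda_0 u$ being constant (cf.\ \eqref{2.11b}), evaluating $W(u_s,u_{s'})$ at $c$ and at $s$ for $c<s<s'$ yields $m_{s'}-m_s = -(pu_s')(s)\,u_{s'}(s)>0$, so $s\mapsto m_s$ increases; the same computation with the left-endpoint solutions $v_t:=\theta+n_t\phi$, $n_t:=-\theta(t)/\phi(t)$ ($t\in(a,c)$, positive on $(t,b)$), gives $m_s<n_t$ for all such $s,t$. Hence $m_b:=\lim_{s\uparrow b}m_s$ is finite, $u_s\to u_b:=\theta+m_b\phi$ uniformly on compact subsets of $(a,b)$, and $u_b\geq 0$ there; since $u_b(c)=1\neq 0$, Lemma~\ref{l2.3}$(i)$ upgrades this to $u_b>0$ on $(a,b)$, the solution required in $(ii)$.

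The hard part will be the first step of $(i)\Rightarrow(ii)$: under the weak regularity of Hypothesis~\ref{h2.1} one cannot feed $u\,\chi_{[\alpha,\beta]}$ directly into the quadratic form of $T_{min}$ itself (its quasi-derivative jumps at $\alpha$ and $\beta$), so the argument must run through the form domain of the Friedrichs extension together with the uniqueness theorem for $\ell v=\lambda_0 v$. Granting that, the monotone-limit construction of $u_b$ is routine, but it does rely essentially on disconjugacy holding on the whole open interval $(a,b)$, not merely on compact subintervals.
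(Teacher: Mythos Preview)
Your proposal is correct and follows essentially the same route as the paper, which simply refers to Hartman's monograph and singles out Jacobi's factorization identity \eqref{2.18} for $(ii)\Rightarrow(i)$---exactly your Picone/ground-state substitution. Your $(i)\Rightarrow(ii)$ via disconjugacy plus the monotone Wronskian limit $u_s\to u_b$ is the standard argument in Hartman's Theorems~XI.6.1--XI.6.2, so there is no substantive difference in approach.
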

%%%%%%%%%%%%%%

For the proof of Theorem \ref{t2.8} one notes that Theorems XI.6.1 and XI.6.2 and Corollary
XI.6.1 in Hartman's monograph \cite{Ha82} extend to our more general
hypotheses on $p,q,r$ without modifications. In particular,  
item $(ii)$ implies item $(i)$ by Jacobi's factorization identity 
\begin{align}
\begin{split} 
& -(pg')'+h^{-1}(ph')'g=-h^{-1}(ph^2(g/h)')', \\
& \quad 0<h, \, ph' \in AC_{loc}((a,b)), \; g  \in \dom(T_{min}).   \lb{2.18}
\end{split} 
\end{align}

%%%%%%%%%%%%%%
\begin{theorem} [Dunford--Schwartz \cite{DS88}, Theorem~XIII.7.40, 
\cite{EGNT13}, Section~11] \lb{t2.9}  ${}$ \\
Suppose Hypothesis \ref{h2.1}. Then the following assertions hold: \\[1mm] 
$(i)$ $T_{min}$ is not bounded from below if and only if for all
$\lambda\in\bbR$, every solution $u(\lambda,\cdot \,)$ of $\ell u = \lambda u$ has infinitely
many zeros on $(a,b)$. \\[1mm]
$(ii)$ If $T_{min}$ is bounded from below and
$\mu_0=\inf(\sigma_{ess}(T))$ for some self-adjoint extension $T$ of
$T_{min}$, then, for $\lambda>\mu_0$, every solution $u(\lambda,\cdot \,)$ of $\ell u=\lambda u$ 
has infinitely many zeros on $(a,b)$, while, for $\lambda<\mu_0$, no 
solution $u(\lambda,\cdot \,)$ of $\ell u = \lambda u$ has infinitely many zeros on $(a,b)$.
\end{theorem}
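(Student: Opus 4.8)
The plan is to reduce both parts to the oscillation theory already recorded in Theorems~\ref{t2.5} and~\ref{t2.8}, to the classical Sturm separation and comparison theorems, and to the standard interior-point decomposition that localizes the essential spectrum near each endpoint. Two preliminary observations will be used throughout. First, by the Sturm separation theorem, if one nontrivial solution of $\ell u=\lambda u$ has infinitely many zeros in $(a,b)$ then so does every solution, and — since the zeros of a nontrivial solution are isolated in $(a,b)$ — these zeros must accumulate at $a$ or at $b$; hence, for fixed $\lambda$, the statement ``every solution of $\ell u=\lambda u$ has infinitely many zeros in $(a,b)$'' is equivalent to ``$\ell$ is oscillatory near $a$ or near $b$ at $\lambda$''. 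Second, by the Sturm comparison theorem, nonoscillation near an endpoint propagates to smaller spectral parameters: if $\ell$ is nonoscillatory near $b$ (resp.\ near $a$) at $\lambda_*$, then it is nonoscillatory near $b$ (resp.\ near $a$) at every $\lambda\le\lambda_*$.

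\textbf{Part $(i)$.} For the ``$\Rightarrow$'' direction of the contrapositive: if $T_{min}$ is bounded from below, Theorem~\ref{t2.5} supplies a $\lambda_0\in\bbR$ with $\ell$ nonoscillatory near both $a$ and $b$ for all $\lambda<\lambda_0$, so for any $\lambda_*<\lambda_0$ not every solution of $\ell u=\lambda_* u$ has infinitely many zeros in $(a,b)$. Conversely, suppose $T_{min}$ is not bounded from below but, for some $\lambda_*$, some solution of $\ell u=\lambda_* u$ has only finitely many zeros in $(a,b)$; then $\ell$ is nonoscillatory near both endpoints at $\lambda_*$, hence (comparison theorem) at every $\lambda\le\lambda_*$, and Theorem~\ref{t2.5}\,$(ii)\Rightarrow(i)$ would force $T_{min}$ to be bounded from below, a contradiction. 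This settles $(i)$.

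\textbf{Part $(ii)$.} I would first reduce to behavior near a single endpoint. Assume $T_{min}$ bounded from below and fix a self-adjoint extension $T$; since the deficiency indices of $T_{min}$ are at most $(2,2)$, any two self-adjoint extensions differ by a finite-rank resolvent perturbation, so $\sigma_{ess}(T)$, and hence $\mu_0=\inf(\sigma_{ess}(T))$, is independent of $T$. Fix $c\in(a,b)$, let $T_{(a,c)}^D$, $T_{(c,b)}^D$ be the Friedrichs realizations of $\ell$ on $L^2((a,c);rdx)$ and $L^2((c,b);rdx)$ with a Dirichlet condition imposed at $c$; by the standard decomposition principle the resolvents of $T$ and of $T_{(a,c)}^D\oplus T_{(c,b)}^D$ differ by an operator of rank at most two, so $\sigma_{ess}(T)=\sigma_{ess}(T_{(a,c)}^D)\cup\sigma_{ess}(T_{(c,b)}^D)$ and $\mu_0=\min\{\mu_0^a,\mu_0^b\}$, with $\mu_0^a:=\inf(\sigma_{ess}(T_{(a,c)}^D))$, $\mu_0^b:=\inf(\sigma_{ess}(T_{(c,b)}^D))$ (the same reasoning shows $\mu_0^a,\mu_0^b$ do not depend on $c$). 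By the preliminary equivalence it then suffices to prove the one-sided statements: $\ell$ is nonoscillatory near $b$ at $\lambda$ whenever $\lambda<\mu_0^b$, and oscillatory near $b$ at $\lambda$ whenever $\lambda>\mu_0^b$ (and symmetrically at $a$), since for $\lambda>\mu_0$ at least one endpoint is then oscillatory so every solution of $\ell u=\lambda u$ has infinitely many zeros in $(a,b)$, while for $\lambda<\mu_0$ both endpoints are nonoscillatory so no solution does. For the first one-sided statement: if $\ell$ were oscillatory near $b$ at some $\lambda<\mu_0^b$, pick a real solution $\phi$ of $\ell u=\lambda u$ with zeros $c<t_1<t_2<\cdots\uparrow b$; extending each $\phi|_{[t_k,t_{k+1}]}$ by zero yields functions $\phi_k$ in the form domain of $T_{(c,b)}^D$ with pairwise disjoint supports and, after an integration by parts using $\ell\phi=\lambda\phi$ and $\phi(t_k)=\phi(t_{k+1})=0$, with $\mathfrak{t}_b[\phi_k]=\lambda\|\phi_k\|^2$, where $\mathfrak{t}_b$ denotes the form of $T_{(c,b)}^D$ and the norm is that of $L^2((c,b);rdx)$; hence the $m$-dimensional linear span of $\phi_1,\dots,\phi_m$ carries the quadratic form $\lambda\|\cdot\|^2$, so by the min--max principle the $m$-th min--max value of $T_{(c,b)}^D$ is $\le\lambda$ for every $m$, forcing $\mu_0^b=\inf(\sigma_{ess}(T_{(c,b)}^D))\le\lambda$, a contradiction. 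For the second: if $\ell$ is nonoscillatory near $b$ at $\lambda$, then some solution of $\ell u=\lambda u$ is positive on a neighborhood $(c',b)$ of $b$, so by Theorem~\ref{t2.8} applied on $(c',b)$ the associated minimal operator — and therefore its Friedrichs extension $T_{(c',b)}^D$ — satisfies $T_{(c',b)}^D\ge\lambda I$, whence $\mu_0^b=\inf(\sigma_{ess}(T_{(c',b)}^D))\ge\inf(\sigma(T_{(c',b)}^D))\ge\lambda$; thus $\lambda>\mu_0^b$ forces $\ell$ oscillatory near $b$. The symmetric argument handles the endpoint $a$, and combining with $\mu_0=\min\{\mu_0^a,\mu_0^b\}$ completes $(ii)$.

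\textbf{Main obstacle.} The step demanding the most care is the one-sided dichotomy together with the bookkeeping attached to the auxiliary operators: one must verify that the interior-Dirichlet decomposition is genuinely a finite-rank perturbation in the resolvent sense (so that the essential spectra add as claimed), that $\sigma_{ess}(T_{(c,b)}^D)=\sigma_{ess}(T_{(c',b)}^D)$ (so that $\mu_0^b$ is an intrinsic quantity attached to the endpoint $b$), and that the min--max characterization of $\inf(\sigma_{ess}(\cdot))$ applies to the semibounded form $\mathfrak{t}_b$ also in the limit-circle case at $b$. Each of these is standard but must be assembled carefully, and the behavior exactly at the threshold $\lambda=\mu_0^b$ is genuinely indeterminate — which is why the statement of $(ii)$ excludes it. For a fully detailed alternative treatment one may follow Dunford--Schwartz \cite{DS88}, Theorem~XIII.7.40, or \cite{EGNT13}, Section~11.
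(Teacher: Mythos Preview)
The paper does not supply its own proof of Theorem~\ref{t2.9}; it is recorded as a classical result with attribution to Dunford--Schwartz \cite{DS88}, Theorem~XIII.7.40, and to \cite{EGNT13}, Section~11, and is followed only by a one-sentence remark on its interpretation. Your argument is therefore not being compared against an in-text proof but against the cited literature.

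That said, your reconstruction is correct and follows the standard route one finds in those sources: part~$(i)$ is an immediate consequence of the equivalence in Theorem~\ref{t2.5} combined with Sturm separation and comparison, and part~$(ii)$ proceeds via Glazman's decomposition principle (splitting at an interior point $c$ to localize the essential spectrum at the two endpoints), after which the one-sided oscillation dichotomy at each endpoint is obtained from the variational min--max characterization in one direction and from Theorem~\ref{t2.8} in the other. Your closing paragraph correctly flags the technical points that need verification---that the interior splitting yields a finite-rank resolvent perturbation, that $\mu_0^b$ is independent of the cut point, and that the min--max principle applies to the form $\mathfrak{t}_b$ also in the limit-circle case---and these are indeed handled in \cite{EGNT13}. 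One minor remark: the resolvent difference between $T$ and $T_{(a,c)}^D\oplus T_{(c,b)}^D$ is finite-rank, but the precise bound on the rank depends on the boundary behavior at $a$ and $b$; ``finite rank'' is all you need and all you should claim.
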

%%%%%%%%%%%%%

Thus, the existence of positive solutions on $(a,b)$ can be used to characterize 
$\inf(\sigma(T))$ while the existence of nonoscillatory solutions can be used to characterize
$\inf(\sigma_{ess}(T))$. Without going into further details at this point, we note that under 
appropriate assumptions on the coefficients, these characterizations extend to elliptic partial
differential operators. We also note that eigenvalue counts in essential spectral gaps in terms 
of (renormalized) oscillation theory in terms of zeros of Wronskians, rather than zeros of 
eigenfunctions, was established in \cite{GST96}. For additional work in this direction we 
refer to \cite{KT08}--\cite{KT09}.  

In order to set up the connection between principal and Weyl--Titchmarsh solutions, we next recall 
Weyl's definition of the limit point property of $\ell$ at the endpoint $a$ (resp., $b$).

%%%%%%%%%%%%%% 
\begin{definition} \lb{d2.10}
Assume Hypothesis \ref{h2.1} and let $z\in \bbC \backslash \bbR$. Then $\ell$ is said 
to be in the {\it limit point case $($l.p.c.$)$ at $a$ $($resp., $b$$)$} if for some $($and hence for 
all\,$)$ $c \in (a,b)$, there exists a unique solution $($up to constant 
multiples\,$)$ $\psi_-(z, \cdot \,)$ $($resp., $\psi_+(z, \cdot \, )$$)$ of 
$\ell u = z u$ such that 
\begin{equation}
\psi_- (z, \cdot \,) \in L^2((a,c);rdx) \, \text{ \big(resp., $\psi_+ (z, \cdot \,) \in L^2((c,b);rdx)$\big)}. 
\lb{2.21} 
\end{equation} 
\end{definition}
%%%%%%%%%%%%%% 

The constants permitted in Definition \ref{d2.10} (while of course $x$-independent) are 
generally $z$-dependent. 

One notes that $L^2$-solutions $u_{\pm}(z,\cdot \,)$ of $\ell u = z u$ in a neighborhood of 
$a$ and $b$ always exist. What singles out the limit point case for $\ell$ at $a$ or $b$ is the 
uniqueness (up to constant multiples) of the $L^2$-solution $\psi_-(z,\cdot \,)$, respectively, 
$\psi_+(z,\cdot \,)$ in Definition \ref{d2.10}. 

Any solution of $\ell u = z u$ satisfying the square 
integrability in \eqref{2.21} in a neighborhood of $a$ (resp., $b$), independent of whether it 
is unique up to constant multiples or not, is called a {\it Weyl--Titchmarsh solution} of 
$\ell u = z u$ near $a$ (resp., $b$). 

We continue with the fact that nonoscillatory behavior at one end point plus a simple condition 
on $r/p$ implies the limit point property at that endpoint: 

%%%%%%%%%%% 
\begin{lemma} [Hartman \cite{Ha48}, see also \cite{EGNT13}, Section~11, \cite{Ge93}, 
\cite{Re76}, \cite{Re51}] \lb{l2.11} ${}$ \\
Assume Hypothesis \ref{h2.1}, let $c \in (a,b)$, and suppose that for some $\lambda_0\in\bbR$,
$\ell-\lambda_0$ is nonoscillatory near $d \in \{a,b\}$. Then, if 
\begin{equation}
\bigg|\int_c^d dx\, [r(x)/p(x)]^{1/2}\bigg| =\infty,
\end{equation}
$\ell$ is in the limit point case at $d$. 
\end{lemma}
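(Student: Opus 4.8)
The plan is to prove the limit point case at $d$ (say $d = b$ for concreteness) by contradiction: assume $\ell$ is in the limit circle case at $b$, which means \emph{every} solution of $\ell u = z u$ lies in $L^2((c,b); rdx)$ for every $z \in \bbC$; in particular this holds for $z = \lambda_0$. First I would invoke Theorem \ref{t2.5} (or rather the nonoscillation hypothesis near $b$ directly) to produce a pair of solutions $u_b(\lambda_0, \cdot\,) > 0$ and $\hat u_b(\lambda_0, \cdot\,) > 0$ on a neighborhood $[c,b)$ of $b$ with $W(u_b, \hat u_b) = 1$, related by the reduction-of-order formula in Lemma \ref{l2.7}\,$(ii)$, namely $\hat u_b(\lambda_0, x) = u_b(\lambda_0, x) \int_c^x dx' \, p(x')^{-1} u_b(\lambda_0, x')^{-2}$. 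Here I am using that nonoscillation of $\ell - \lambda_0$ near $b$ gives a positive solution near $b$, and Lemma \ref{l2.7}\,$(ii)$ then manufactures a nonprincipal partner.

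The heart of the argument is a lower bound forcing at least one of $u_b, \hat u_b$ out of $L^2((c,b); rdx)$. The key analytic input is the elementary inequality $u_b \hat u_b \geq $ (a constant times) $\int_c^x [r/p]^{1/2}$ near $b$, which one obtains by writing, for $c \le x < b$,
\begin{equation}
\int_c^x dx'\, [r(x')/p(x')]^{1/2} = \int_c^x dx' \, \big[r(x')^{1/2} u_b(\lambda_0,x')\big] \cdot \big[p(x')^{-1/2} u_b(\lambda_0,x')^{-1}\big],
\end{equation}
and applying Cauchy--Schwarz to split this as
\begin{equation}
\bigg(\int_c^x dx'\, r(x') u_b(\lambda_0,x')^2\bigg)^{1/2} \bigg(\int_c^x dx'\, p(x')^{-1} u_b(\lambda_0,x')^{-2}\bigg)^{1/2}.
\end{equation}
The second factor is exactly $[\hat u_b(\lambda_0,x)/u_b(\lambda_0,x)]^{1/2}$ by the reduction-of-order formula, so squaring yields
\begin{equation}
\bigg(\int_c^x dx'\, [r(x')/p(x')]^{1/2}\bigg)^2 \le \bigg(\int_c^x dx'\, r(x') u_b(\lambda_0,x')^2\bigg) \cdot \frac{\hat u_b(\lambda_0,x)}{u_b(\lambda_0,x)}.
\end{equation}
Now the hypothesis $\big|\int_c^b dx\, [r/p]^{1/2}\big| = \infty$ forces the left side to $\infty$ as $x \uparrow b$. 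If $u_b(\lambda_0, \cdot\,) \in L^2((c,b); rdx)$, the first factor on the right stays bounded, hence $\hat u_b(\lambda_0, x)/u_b(\lambda_0,x) \to \infty$; combined with $u_b \in L^2$ this does not immediately contradict $\hat u_b \in L^2$, so I would instead apply the Cauchy--Schwarz step symmetrically or feed the divergence back: multiply through and use that $\hat u_b \in L^2((c,b);rdx)$ too to bound $\int_c^x r \hat u_b^2$, obtaining a bound on $\int_c^x [r/p]^{1/2}$ by $\big(\int_c^x r \hat u_b^2\big)^{1/2}\big(\int_c^x p^{-1}\hat u_b^{-2} \cdot (\hat u_b^2/\hat u_b^2)\big)$ — the cleanest route is to note $\int_c^x [r/p]^{1/2} \le \big(\int_c^x r\, u_b \hat u_b \cdot \text{something}\big)$; more robustly, apply Cauchy--Schwarz to the product $r^{1/2}(u_b\hat u_b)^{1/2} \cdot p^{-1/2}(u_b \hat u_b)^{-1/2}$ using $W(u_b,\hat u_b) = 1 \Rightarrow p^{-1} = (u_b \hat u_b' - \hat u_b u_b')/(u_b\hat u_b) \cdot$(sign), together with the AM--GM bound $u_b \hat u_b \le \tfrac12(u_b^2 + \hat u_b^2)$, to get
\begin{equation}
\bigg(\int_c^x dx'\, [r(x')/p(x')]^{1/2}\bigg)^2 \le C \int_c^x dx'\, r(x')\big[u_b(\lambda_0,x')^2 + \hat u_b(\lambda_0,x')^2\big],
\end{equation}
whose right side is bounded if \emph{both} $u_b, \hat u_b \in L^2((c,b);rdx)$ — contradiction with the divergence of the left side. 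Hence not both solutions can be in $L^2$ near $b$, i.e.\ $\ell$ is in the limit point case at $b$.

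The main obstacle is making the Cauchy--Schwarz step genuinely produce a contradiction rather than merely an asymptotic statement: one must arrange the integrand so that $L^2$-membership of the relevant solution(s) caps the right-hand side uniformly in $x$. Getting the bookkeeping right with $W(u_b,\hat u_b)=1$ and the reduction-of-order identity — so that $p^{-1}$ or $p^{-1}(u_b\hat u_b)^{-1}$ is expressed cleanly — is the delicate point; the AM--GM step $u_b\hat u_b \le \tfrac12(u_b^2+\hat u_b^2)$ is what lets a single Cauchy--Schwarz application reach both square-integrability hypotheses simultaneously. The endpoint $a$ case is identical after reflecting the interval. Everything else (positivity of $u_b, \hat u_b$, local absolute continuity, the reduction-of-order formula) is supplied verbatim by Lemma \ref{l2.7} and Theorem \ref{t2.5} under Hypothesis \ref{h2.1}.
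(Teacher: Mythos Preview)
The paper itself does not prove this lemma; it only remarks that Hartman's argument ``is based on an application of (non)principal solutions of $\ell u = \lambda u$.'' Your strategy is in that spirit, but the final Cauchy--Schwarz step contains a genuine gap.

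When you split $[r/p]^{1/2} = [r\, u_b \hat u_b]^{1/2} \cdot [p\, u_b \hat u_b]^{-1/2}$ and apply Cauchy--Schwarz, obtaining your claimed bound $\big(\int_c^x [r/p]^{1/2}\big)^2 \le C \int_c^x r(u_b^2 + \hat u_b^2)$ would require the second factor $\int_c^x (p\, u_b \hat u_b)^{-1}\,dx'$ to stay bounded. But from $W(u_b, \hat u_b) = 1$ one computes $(p\, u_b \hat u_b)^{-1} = \big(\ln(\hat u_b/u_b)\big)'$, so for any $c' \in (c,b)$,
\[
\int_{c'}^x \frac{dx'}{p(x')\, u_b(\lambda_0,x')\, \hat u_b(\lambda_0,x')} = \ln\frac{\hat u_b(\lambda_0,x)}{u_b(\lambda_0,x)} - \ln\frac{\hat u_b(\lambda_0,c')}{u_b(\lambda_0,c')} \xrightarrow[x\uparrow b]{} \infty,
\]
since $u_b$ is principal and hence $\hat u_b/u_b \to \infty$ by Lemma~\ref{l2.7}\,$(i)$. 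The second Cauchy--Schwarz factor therefore diverges and cannot be absorbed into a constant $C$; your displayed inequality fails.

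The repair is to run Cauchy--Schwarz with the \emph{nonprincipal} solution alone rather than the product: write $[r/p]^{1/2} = [r\, \hat u_b^2]^{1/2} \cdot [p\, \hat u_b^2]^{-1/2}$, so that
\[
\bigg(\int_c^x dx'\,[r(x')/p(x')]^{1/2}\bigg)^2 \le \bigg(\int_c^x dx'\, r(x')\, \hat u_b(\lambda_0,x')^2\bigg) \bigg(\int_c^x dx'\, p(x')^{-1} \hat u_b(\lambda_0,x')^{-2}\bigg).
\]
The second factor is bounded by $\int_c^b p^{-1} \hat u_b^{-2} < \infty$, which is precisely \eqref{2.11}. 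Hence if $\hat u_b \in L^2((c,b); r\,dx)$ the right-hand side stays bounded as $x\uparrow b$, contradicting the hypothesis; thus $\hat u_b \notin L^2((c,b); r\,dx)$ and $\ell$ is in the limit point case at $b$. The idea you are missing is that it is the \emph{finiteness} of $\int p^{-1} \hat u_b^{-2}$ for a nonprincipal solution---not any symmetric combination involving $u_b \hat u_b$---that makes the Cauchy--Schwarz estimate close.
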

%%%%%%%%%%% 

Hartman's elegant proof of Lemma \ref{l2.11} in \cite{Ha48} is based on an application of 
(non)principal solutions of $\ell u = \lambda u$. 

In this context we also mention the following limit point result due to Kurss \cite{Ku67} (for 
the special case $r=1$): If for some  
$c \in (a,b)$, $q(x) \geq q_0(x)$ for a.e.~$x \in (a,c]$ (resp., $x \in [c,b)$), and the reference coefficient $q_0$ gives rise to a differential expression $\ell_0 = r^{-1}[-(d/dx) p (d/dx) + q_0]$ being in the limit point case and nonoscillatory at $a$ (resp., $b$), then $\ell = r^{-1}[-(d/dx) p (d/dx) + q]$ is in the limit point case at $a$ (resp., $b$). Kurss uses a Sturm comparison theorem to show that 
if for some $\lambda \in \bbR$, $\ell u = \lambda u$ has a solution $u_0 > 0$ on $(a,c]$ (resp., $[c,b)$) dominating a 
solution $v_0 > 0$ of $\ell_0 v_0 = \lambda v_0$ on $(a,c]$ (resp., $[c,b)$) such that 
$v_0 \notin L^2((a,c);dx)$ (resp., $v_0 \notin L^2((c,b);dx)$) 
in the sense that 
\begin{equation} 
u_0 \geq v_0 > 0 \, \text{ on $(a,c]$ (resp., $[c,b)$).} 
\end{equation} 
Thus, increasing the potential increases the non-$L^2$-solution, in particular, 
$u_0 \notin L^2((a,c);dx)$ (resp., $u_0 \notin L^2((c,b);dx)$). We are indebted to Hubert Kalf 
\cite{Ka15} for kindly pointing out to us the relevance of \cite{Ku67} and for a detailed discussion 
of the general case $r \neq 1$. 

%%%%%%%%%%%%%
\begin{remark} \lb{r2.12}
Assuming $\ell$ to be nonoscillatory near $a$ (resp., $b$) for some $\lambda_a \in \bbR$ 
(resp., $\lambda_b \in \bbR$), and in the limit point case at $a$ (resp., $b$), one 
recalls that $\psi_-$ (resp., $\psi_+$) in \eqref{2.21} analytically extends to $z < \lambda_a$ 
(resp., $z < \lambda_b$).   
In particular, for fixed $x \in (a,b)$, $\psi_- (\, \cdot \,, x)$ (resp., 
$\psi_+ (\, \cdot \,, x)$) is analytic in $\bbC \backslash [\lambda_a,\infty)$ (resp., 
$\bbC \backslash [\lambda_b,\infty)$). For more details in this context we refer to the comments following \cite[Proposition~1.1]{GST96}.
\end{remark} 
%%%%%%%%%%%%%

Next, we fix a reference point $x_0 \in (a,b)$, and introduce the normalized 
solutions $\phi(z, \cdot \, , x_0)$ and $\theta(z, \cdot \, , x_0)$ of $\ell u = z u$ 
by 
\begin{align}
\begin{split} 
& \phi(z, x_0 , x_0) = 0, \quad [p(x) \phi'(z, x , x_0)]_{x=x_0} = 1, \\
& \theta(z, x_0 , x_0) = 1, \quad [p(x) \theta'(z, x , x_0)]_{x=x_0} = 0,   \lb{2.22} 
\end{split}
\end{align}
with prime ${}'$ denoting $\partial/\partial x$, one infers (from the $z$-independence of the 
initial conditions in \eqref{2.22}) that for fixed $x \in (a,b)$, $\phi(\, \cdot \,, x,x_0)$ and 
$\theta(\, \cdot \,, x,x_0)$ are entire with respect to $z \in \bbC$ and that 
\begin{equation}
W (\theta(z, \cdot \, , x_0), \phi(z, \cdot \, , x_0)) = 1, \quad z \in \bbC, \; x_0 \in (a,b). 
\end{equation}  

Consequently, if $u_{\pm}(z, \cdot \,)$ denote {\it any} nontrivial square integrable solutions of 
$\ell u = z u$ in a neighborhood of $a$ and $b$, that is, for some 
(and hence for all) $c \in (a,b)$, 
\begin{equation}
u_+ (z, \cdot \,) \in L^2((c,b);rdx), \quad u_- (z, \cdot \,) \in L^2((a,c);rdx), 
\lb{2.26} 
\end{equation} 
one obtains $u_{\pm} (z, x_0) \neq 0$, and 
\begin{equation}
u_{\pm} (z, x) u_{\pm} (z, x_0)^{-1} = \theta(z, x, x_0) + \phi(z, x, x_0) m_{\pm}(z,x_0), \quad 
z \in \bbC\backslash\bbR, \; x, x_0 \in (a,b),    \lb{2.27}
\end{equation}
for some coefficients $m_{\pm}(\cdot \,,x_0)$, the Weyl--Titchmarsh functions associated with $\ell$.

The function $m_- (z,x_0)$ (resp., $m_+ (z,x_0)$) is uniquely determined  
if and only if $\ell$ is in the limit point case at $a$ (resp., $b$). In this case 
$u_- (z, \cdot \,)$ (resp., $u_+ (z, \cdot \,)$) coincides up to $z$-dependent constant multiples with 
$\psi_- (z, \cdot \,)$ (resp., $\psi_+ (z, \cdot \,)$) in \eqref{2.21}. 

Moreover, 
$\pm m_{\pm}(\, \cdot \, , x_0)$ are Nevanlinna--Herglotz functions, that is, 
for all $x_0 \in (a,b)$, 
\begin{equation}
m_{\pm}(\, \cdot \, , x_0) \, \text{ are analytic in $\bbC \backslash \bbR$,}
\end{equation} 
and 
\begin{equation}
\pm \Im(m_{\pm}(z,x_0)) > 0, \quad z \in \bbC_+. 
\end{equation} 
In addition, for all $x_0 \in (a,b)$, $m_{\pm}(\, \cdot \, , x_0)$ satisfy 
\begin{equation}
m_{\pm}(z,x_0) = \ol{m_{\pm} ({\ol z},x_0)}, \quad z \in \bbC_+. 
\end{equation}
Finally, one also infers for all $z \in \bbC \backslash \bbR$, $x_0 \in (a,b)$, 
\begin{align}
& W(u_+ (z, \cdot \,), u_- (z, \cdot \,)) = [m_-(z,x_0) - m_+(z,x_0)] u_+ (z, x_0) u_- (z, x_0),     \\ 
& m_{\pm}(z,x_0) = [p(x_0) u_{\pm}^{\prime} (z, x_0)] / u_{\pm} (z, x_0). 
\end{align}

Given these preparations we can finally state the main result of this section which identifies 
principal and Weyl--Titchmarsh solutions at an endpoint where $\ell$ is nonoscillatory and in 
the limit point case:

%%%%%%%%%%%%
\begin{theorem} \lb{t2.13} 
Assume Hypothesis \ref{h2.1}. \\
$(i)$ If $\ell$ is nonoscillatory and in the limit point case at $a$, then there exists 
$\lambda_a \in \bbR$, such that for all $\lambda < \lambda_a$, $x,x_0 \in (a,b)$, with $x, x_0$ 
to the left of the first zero of $\psi_- (\lambda, \cdot \,), u_a(\lambda,\cdot \,)$ $($if any\,$)$,   
\begin{equation}
\psi_-(\lambda,x) \psi_- (\lambda, x_0)^{-1} = u_a(\lambda,x) u_a(\lambda,x_0)^{-1}, \lb{2.35}
\end{equation}
that is, $\psi_-(\lambda,\cdot \,)$ and $u_a(\lambda, \cdot \,)$, $\lambda < \lambda_a$ 
are constant multiples of each other. \\[1mm]
$(ii)$ If $\ell$ is nonoscillatory and in the limit point case at $b$, then there exists 
$\lambda_b \in \bbR$, such that for all $\lambda < \lambda_b$, $x,x_0 \in (a,b)$, with $x, x_0$ 
to the right of the last zero of $\psi_+ (\lambda, \cdot \,), u_b(\lambda,\cdot \,)$ $($if any\,$)$,   
\begin{equation}
\psi_+(\lambda,x) \psi_+ (\lambda, x_0)^{-1} = u_b(\lambda,x) u_b(\lambda,x_0)^{-1},  \lb{2.34}
\end{equation}
that is, $\psi_+(\lambda,\cdot \,)$ and $u_b(\lambda, \cdot \,)$, $\lambda < \lambda_b$, 
are constant multiples of each other. 
\end{theorem}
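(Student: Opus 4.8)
The plan is to prove part $(ii)$ and note that part $(i)$ follows by the obvious reflection $x \mapsto a+b-x$ (or by symmetric reasoning). The two hypotheses at $b$ are: $\ell$ is nonoscillatory at $b$, and $\ell$ is in the limit point case at $b$. First I would invoke Theorem \ref{t2.5}: since $\ell$ nonoscillatory near $b$ combined with (by Theorem \ref{t2.8} and \ref{t2.5}) lower semiboundedness produces the threshold, there exists $\lambda_b \in \bbR$ such that for every $\lambda < \lambda_b$ the equation $\ell u = \lambda u$ has, near $b$, a principal solution $u_b(\lambda,\cdot\,) > 0$ and a linearly independent nonprincipal solution $\hat u_b(\lambda,\cdot\,) > 0$ on some $[c,b)$, normalized by $W(u_b,\hat u_b) = 1$, with $\int_c^b p^{-1} u_b^{-2}\,dx = \infty$ and $\int_c^b p^{-1}\hat u_b^{-2}\,dx < \infty$ as in \eqref{2.10}, \eqref{2.11}. (Shrinking $\lambda_b$ if necessary, one also arranges via Remark \ref{r2.12} that the Weyl solution $\psi_+(\lambda,\cdot\,)$ extends analytically to $\lambda < \lambda_b$.)

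The heart of the argument is to show $\psi_+(\lambda,\cdot\,)$ is, up to a constant, the principal solution $u_b(\lambda,\cdot\,)$. Since $\{u_b(\lambda,\cdot\,), \hat u_b(\lambda,\cdot\,)\}$ is a basis of the solution space, write $\psi_+(\lambda,\cdot\,) = \alpha\, u_b(\lambda,\cdot\,) + \beta\, \hat u_b(\lambda,\cdot\,)$ for constants $\alpha,\beta \in \bbC$ (depending on $\lambda$). I would show $\beta = 0$. The key input is the $L^2$-property \eqref{1.4}: $\psi_+(\lambda,\cdot\,) \in L^2((c,b);r\,dx)$. On the other hand, the nonprincipal solution $\hat u_b$ is \emph{not} in $L^2((c,b);r\,dx)$ near $b$ — this is exactly where the limit point hypothesis is used, for in the limit point case at $b$ the $L^2((c,b);r\,dx)$-solutions of $\ell u = \lambda u$ form a one-dimensional space, and that space is spanned by $u_b(\lambda,\cdot\,)$ (one must first check $u_b(\lambda,\cdot\,) \in L^2$ near $b$: this follows because in the limit point case some nontrivial solution is $L^2$ near $b$, and if it had a nonzero $\hat u_b$-component it would, together with $\psi_+$, give two linearly independent $L^2$-solutions, contradicting limit point — so the $L^2$-solution is a multiple of $\psi_+$, and one shows this common solution is principal). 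Concretely: if $\beta \neq 0$, then near $b$, $\psi_+ = \hat u_b(\beta + \alpha\, u_b/\hat u_b)$; since $u_b/\hat u_b \to 0$ as $x\uparrow b$ by \eqref{2.12a} (the minimality in Lemma \ref{l2.7}\,$(i)$), $\psi_+$ is asymptotically comparable to $\hat u_b$, hence by the Harnack-type control (Lemma \ref{l2.3}\,$(ii)$, after reducing to positive real solutions) $|\psi_+|^2 \gtrsim |\hat u_b|^2$ near $b$; then $\int^b r |\psi_+|^2\,dx \geq \operatorname{const}\int^b r\, \hat u_b^2\, dx = \infty$, since the nonprincipal solution fails to be square integrable near $b$ (this last fact being the standard characterization of the limit point case: were $\hat u_b \in L^2$ near $b$, both $u_b$ and $\hat u_b$ would be $L^2$ near $b$, contradicting the limit point assumption). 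This contradiction forces $\beta = 0$, so $\psi_+(\lambda,\cdot\,) = \alpha\, u_b(\lambda,\cdot\,)$ with $\alpha \neq 0$ (as $\psi_+ \not\equiv 0$).

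Having $\psi_+(\lambda,\cdot\,) = \alpha(\lambda)\, u_b(\lambda,\cdot\,)$, equation \eqref{2.34} is immediate: for $x, x_0$ beyond the last zero of either solution (equivalently, on an interval $[c',b)$ where both are nonvanishing — note $u_b > 0$ there by construction, and $\psi_+$ differs from it by a constant), one has $\psi_+(\lambda,x)\psi_+(\lambda,x_0)^{-1} = \alpha\, u_b(\lambda,x)(\alpha\, u_b(\lambda,x_0))^{-1} = u_b(\lambda,x) u_b(\lambda,x_0)^{-1}$. The main obstacle is the middle step — rigorously justifying that the nonprincipal solution $\hat u_b$ is not square integrable near $b$ under the limit point hypothesis, and cleanly handling the passage between the complex Weyl solution and the real positive (non)principal solutions $u_b, \hat u_b$ when $\lambda < \lambda_b$ is real; here one uses that for real $\lambda < \lambda_b$ the analytically continued $\psi_+(\lambda,\cdot\,)$ may be taken real (Remark \ref{r2.12}) and of one sign near $b$, so that Lemma \ref{l2.3} and the minimality relation \eqref{2.12a} apply directly.
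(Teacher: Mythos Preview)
Your argument is correct and rests on the same two ingredients as the paper's proof: the minimality property \eqref{2.12a} of the principal solution (Lemma \ref{l2.7}\,$(i)$) and the uniqueness of the $L^2$-solution in the limit point case. However, the paper's route is more direct and avoids the detour through $\hat u_b$. Instead of expanding $\psi_+$ in the basis $\{u_b,\hat u_b\}$ and then arguing that $\hat u_b \notin L^2((c,b);r\,dx)$, the paper simply assumes $\psi_+(\lambda,\cdot\,)$ is \emph{not} principal (equivalently, $W(\psi_+,u_b)\neq 0$) and applies \eqref{2.12a} directly with $u=\psi_+$: this gives $|u_b(\lambda,x)| \leq C_+\,|\psi_+(\lambda,x)|$ near $b$, whence $u_b(\lambda,\cdot\,)\in L^2((c,b);r\,dx)$. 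One then has two linearly independent $L^2$-solutions $\psi_+$ and $u_b$, contradicting the limit point hypothesis. Note that your argument ``$\hat u_b\in L^2 \Rightarrow u_b\in L^2$ by $u_b=o(\hat u_b)$'' is exactly this step with $\hat u_b$ in place of $\psi_+$, so your proof in effect contains the paper's argument as a substep after an unnecessary preliminary comparison of $\psi_+$ with $\hat u_b$; the invocation of Harnack's inequality (Lemma \ref{l2.3}\,$(ii)$) is likewise not needed, since the pointwise bound already follows from $u_b/\hat u_b\to 0$.
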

%%%%%%%%%%%%
\begin{proof}
It suffices to consider the case of $\psi_+$ and $u_b$. Then, if $\psi_+$ is a nonprincipal 
solution of $\ell u = \lambda u$, 
Lemma \ref{l2.7}\,$(i)$ implies the existence of $C_+ > 0$ and $c \in (a,b)$, such that for all 
$\lambda < \lambda_b$ and for all $x \in (c,b)$,  
\begin{equation}
|u_b(\lambda, x)| \leq C_+ |\psi_+(\lambda, x)|. 
\end{equation} 
Thus, $u_b(\lambda, \cdot \,) \in L^2((c,b); r dx)$. But since by hypothesis $\psi_+$ and $u_b$ 
are linearly independent, $W(\psi_+(\lambda, \cdot \,), u_b(\lambda, \cdot \,)) \neq 0$, this 
contradicts the limit point hypothesis at $b$ which yields precisely one 
$L^2((c,b); r dx)$-solution up to constant (generally, $\lambda$-dependent) multiples. 
\end{proof}
%%%%%%%%%%%%

In particular, if $T_{min}$ is bounded from below by $\lambda_0 \in \bbR$ and essentially self-adjoint, 
then for all $\lambda < \lambda_0$, principal and Weyl--Titchmarsh solutions at an endpoint 
coincide up to constant ($\lambda$-dependent) multiples.   
 
We briefly follow up with the connection between Green's functions and principal solutions for 
Sturm--Liouville operators, illustrating once more the relevance of principal solutions.

%%%%%%%%%%%%%
\begin{lemma} \lb{l2.14}
Assume Hypothesis \ref{h2.1} and suppose that $T_{min} \geq \lambda_0 I$ for some 
$\lambda_0 \in \bbR$. In addition, asume that $\ell$ is in the limit point case at $a$ and $b$. 
Then 
\begin{equation}
\ol{T_{min}} = T_{max} := T  
\end{equation} 
is the unique self-adjoint extension of $T_{min}$ in $L^2((a,b);rdx)$ and for any 
$x_0 \in (a,b)$, 
\begin{align}
\begin{split} 
0 < G (\lambda,x,x')&=\bigg(\int_{x_0}^b dt \, p(t)^{-1}
u_a(\lambda,t)^{-2}\bigg)u_b(\lambda,x_0)^{-1}u_a(\lambda,x_0)\times 
\\
&\quad \times \begin{cases} u_a(\lambda,x)u_b(\lambda,x'), &
a < x\leq x' < b, \\ u_a(\lambda,x')u_b(\lambda,x), &
a < x' \leq x < b, \end{cases} \quad  \lambda < \lambda_0,   \lb{2.28}
\end{split} 
\end{align}
is the positive Green's function of $T$. Here we abbreviated   
\begin{equation}
G(z,x,x') = (T - z I)^{-1}(x,x'), \quad  x,x' \in (a,b), \; z \in \bbC \backslash [\lambda_0, \infty).   \lb{2.29}
\end{equation}
\end{lemma}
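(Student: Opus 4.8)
The plan is to derive the Green's function formula from the standard resolvent construction for a limit-point/limit-point Sturm--Liouville operator, combined with the identification of Weyl--Titchmarsh solutions with principal solutions furnished by Theorem \ref{t2.13}. First I would invoke Theorem \ref{t2.5} (equivalence of semiboundedness with nonoscillation near both endpoints below some $\lambda_0$) together with Lemma \ref{l2.11} only tangentially; the limit point hypothesis at $a$ and $b$ is assumed outright, so $\ell$ has exactly one $L^2$-solution near each endpoint up to constant multiples, and the classical Weyl--von Neumann theory gives that $\ol{T_{min}} = T_{max} =: T$ is the unique self-adjoint extension. Thus the resolvent $(T - zI)^{-1}$ for $z \in \bbC \backslash [\lambda_0,\infty)$ is an integral operator with the usual Green's function
\begin{equation}
G(z,x,x') = W(\psi_-(z,\cdot\,),\psi_+(z,\cdot\,))^{-1} \times
\begin{cases} \psi_-(z,x)\psi_+(z,x'), & a < x \leq x' < b, \\ \psi_-(z,x')\psi_+(z,x), & a < x' \leq x < b, \end{cases}
\end{equation}
where $\psi_\pm(z,\cdot\,)$ are the Weyl--Titchmarsh solutions of \eqref{2.21}.

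Next I would specialize to real $\lambda < \lambda_0$. By Remark \ref{r2.12} the solutions $\psi_\pm(z,\cdot\,)$ extend analytically to $z < \lambda_0$ (nonoscillation near both endpoints below $\lambda_0$ holds by Theorem \ref{t2.5}\,$(ii)$), and by Theorem \ref{t2.13} these extensions are constant multiples of the principal solutions $u_a(\lambda,\cdot\,)$ and $u_b(\lambda,\cdot\,)$ respectively. Since $G(z,x,x')$ is a ratio in which $\psi_-$ appears homogeneously of degree one in numerator and denominator (likewise $\psi_+$), the arbitrary constant multiples cancel, so I may replace $\psi_\pm(\lambda,\cdot\,)$ by $u_a(\lambda,\cdot\,)$ and $u_b(\lambda,\cdot\,)$ throughout, obtaining
\begin{equation}
G(\lambda,x,x') = W(u_a(\lambda,\cdot\,),u_b(\lambda,\cdot\,))^{-1} \times
\begin{cases} u_a(\lambda,x)u_b(\lambda,x'), & a < x \leq x' < b, \\ u_a(\lambda,x')u_b(\lambda,x), & a < x' \leq x < b. \end{cases}
\end{equation}
It then remains to evaluate the Wronskian constant $W(u_a(\lambda,\cdot\,),u_b(\lambda,\cdot\,))$ and show it equals the reciprocal of the prefactor in \eqref{2.28}. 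For this I would use Lemma \ref{l2.7}\,$(ii)$: writing a nonprincipal solution $\hat u_a(\lambda,x) = u_a(\lambda,x)\int_{x_0}^x dt\,p(t)^{-1}u_a(\lambda,t)^{-2}$ and noting $W(u_a,\hat u_a) = 1$ by the standard computation, one sees that $u_b(\lambda,\cdot\,)$, being principal at $b$, must be a constant multiple of the principal-at-$b$ representative; but more directly, by \eqref{2.16} and the reduction-of-order formula, $u_b$ and $u_a$ are linearly independent (one principal at $a$ only, the other principal at $b$ only, and they cannot coincide since $T_{min}$ is bounded below), so the Wronskian is a nonzero constant whose value I extract by evaluating at $x_0$ and using the explicit integral representation of $\hat u_a$; a short calculation identifies $W(u_a(\lambda,\cdot\,),u_b(\lambda,\cdot\,)) = u_a(\lambda,x_0) u_b(\lambda,x_0)^{-1} \big(\int_{x_0}^b dt\, p(t)^{-1} u_a(\lambda,t)^{-2}\big)^{-1}$ (the integral is finite because $u_a$ is nonprincipal at $b$ — indeed \eqref{2.11} applies to $\hat u_b$, and $u_a$ is not principal at $b$, so $\int^b p^{-1}u_a^{-2} < \infty$).

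Finally I would address positivity. By Theorem \ref{t2.5}\,$(iii)$ and Theorem \ref{t2.8}, for $\lambda < \lambda_0$ both $u_a(\lambda,\cdot\,)$ and $u_b(\lambda,\cdot\,)$ can be chosen strictly positive on $(a,b)$ (a positive solution of $\ell v = \lambda_0 v$ exists, hence nonoscillation forces the principal solutions to be sign-definite, and Lemma \ref{l2.3}\,$(i)$ upgrades nonnegativity to strict positivity). The Wronskian constant, and hence the prefactor $\big(\int_{x_0}^b p^{-1}u_a^{-2}\big) u_b(\lambda,x_0)^{-1}u_a(\lambda,x_0)$, is then manifestly positive, so $G(\lambda,x,x') > 0$. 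I expect the main obstacle to be the bookkeeping in the Wronskian evaluation — making the finiteness of $\int_{x_0}^b p^{-1}u_a^{-2}$ explicit (this is exactly the statement that $u_a$, being principal at $a$, is \emph{nonprincipal} at $b$ when $T_{min} \geq \lambda_0 I$, which needs the nonoscillation at $b$ below $\lambda_0$ from Theorem \ref{t2.5}), and then pinning down the constant so that it matches \eqref{2.28} exactly rather than up to a spurious factor. Everything else is either cited (unique self-adjointness, the resolvent kernel formula) or a routine reduction-of-order manipulation.
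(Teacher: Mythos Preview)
The paper does not actually supply a proof of Lemma \ref{l2.14}; it states the result and defers to \cite{GZ91} and \cite{EGNT13} for the details. Interestingly, the paper's logic runs in the opposite direction from yours: it takes the principal-solution form \eqref{2.28} as known from those references, and then remarks (immediately after the lemma) that Theorem \ref{t2.13} allows one to pass to the Weyl--Titchmarsh form. Your route --- start from the standard resolvent kernel in terms of $\psi_\pm$, invoke Theorem \ref{t2.13} to swap in $u_a, u_b$, then compute the Wronskian via reduction of order --- is the natural way to reconstruct the result from scratch and is correct in outline.

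One concrete bookkeeping point (precisely the obstacle you anticipated): with the paper's convention $W(u,v) = u(pv') - (pu')v$, the Green's function prefactor is $W(\psi_+,\psi_-)^{-1}$, not $W(\psi_-,\psi_+)^{-1}$ --- see the display immediately following Lemma \ref{l2.14}. Correspondingly, writing $u_b(\lambda,x) = C\, u_a(\lambda,x)\int_x^b p^{-1}u_a^{-2}$ and evaluating at $x_0$ gives
\[
W(u_b,u_a) = C = u_b(\lambda,x_0)\, u_a(\lambda,x_0)^{-1} \bigg(\int_{x_0}^b dt\, p(t)^{-1}u_a(\lambda,t)^{-2}\bigg)^{-1},
\]
whose reciprocal is exactly the prefactor in \eqref{2.28}. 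Your stated value of $W(u_a,u_b)$ has both the sign and the ratio $u_a/u_b$ inverted; fixing this also repairs the positivity argument, since $W(u_b,u_a)>0$ directly. The remaining ingredients --- that $u_a$ cannot be principal at $b$ (else $u_a$ and $u_b$ would be linearly dependent, forcing an $L^2$ eigenfunction below $\lambda_0$), hence $\int_{x_0}^b p^{-1}u_a^{-2}<\infty$, and that $u_a,u_b$ are globally positive on $(a,b)$ for $\lambda<\lambda_0$ --- you have identified correctly.
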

%%%%%%%%%%%%%

As a consequence of Theorem \ref{t2.13}, $u_a(\lambda,\cdot \,)$ and $u_b(\lambda,\cdot \,)$ 
in the Green's function representation \eqref{2.28} 
can be replaced by $\psi_-(z,\cdot \,)$ and $\psi_+(z,\cdot \,)$. More precisely, an additional 
analytic continuation with respect to $z \in \bbC \backslash [\lambda_0, \infty)$ yields 
\begin{align}
& G (z,x,x') = \f{1}{W(\psi_+(z,\cdot \,), \psi_-(z,\cdot \,))} \begin{cases} \psi_-(z,x) \psi_+(z,x'), &
a < x\leq x' < b, \\ 
\psi_+(z,x) \psi_-(z,x'), & a < x' \leq x < b, \end{cases}    \no \\[1mm] 
& \hspace*{8.4cm} z \in \bbC \backslash [\lambda_0, \infty),   
\end{align}
where for all $z \in \bbC \backslash [\lambda_0, \infty)$, $x_0 \in (a,b)$, 
\begin{align}
& W(\psi_+ (z, \cdot \,), \psi_- (z, \cdot \,)) = [m_-(z,x_0) - m_+(z,x_0)] \psi_+ (z, x_0) \psi_- (z, x_0),     \\ 
& m_{\pm}(z,x_0) = [p(x_0) \psi_{\pm}^{\prime} (z, x_0)] / \psi_{\pm} (z, x_0). 
\end{align} 

The material in Lemma \ref{l2.3}--Theorem \ref{t2.9}, Lemma \ref{l2.11}, and Lemma \ref{l2.14} 
(and considerably more) is discussed in great detail in \cite{GZ91} (with special emphasis on 
the Friedrichs extension $T_F$ of $T_{min}$), and under more general conditions on $\ell$ and 
its coefficients in \cite{EGNT13}. 

We conclude this section by recalling a known formula for $m_- (\, \cdot \, , x_0)$ (resp., 
$m_+ (\, \cdot \, , x_0)$) whenever $\ell$ is in the limit point case and nonoscillatory at 
$a$ (resp., $b$): Assuming the limit point case of $\ell$ at $a$ (resp., $b$), it is well-known that 
\begin{align}
\begin{split} 
& m_-(z, x_0) = - \lim_{x \downarrow a} \theta(z,x,x_0)/\phi(z,x,x_0), 
\quad z \in \bbC\backslash\bbR,    \lb{2.42} \\
& (\text{resp., } \, m_+(z, x_0) = - \lim_{x \uparrow b} \theta(z,x,x_0)/\phi(z,x,x_0), \quad z \in \bbC\backslash\bbR).
\end{split}
\end{align}
Next, fix $z \in \bbC$ and suppose that $v(z, \cdot \,, x_0)$ satisfies $\ell u = z u$ and 
$v(z,x) \neq 0$ for $x \in [x_0,b)$, then clearly $w(z, \cdot \,, x_0)$ defined by 
\begin{equation}
w(z,x) = v(z,x,x_0) \bigg[C_1 + C_2 \int_{x_0}^x dx' \, p(x')^{-1} v(z,x',x_0)^{-2}\bigg], \quad 
x \in [x_0,b),    \lb{2.43} 
\end{equation}
is a solution of $\ell u = z u$ satisfying $W(v(z, \cdot \, ,x_0), w(z, \cdot \, ,x_0)) = C_2$. 
An elementary application of these facts to $\phi(z, \cdot \, , x_0)$ and $\theta(z, \cdot \, ,x_0)$, 
taking into account that $\theta(z, x, x_0) \neq 0$, $z \in \bbC\backslash\bbR$, $x, x_0 \in (a,b)$, yields
\begin{equation}
\phi(z,x, x_0) = \theta(z,x,x_0) \int_{x_0}^x dx' \, p(x')^{-1} \theta(z,x',x_0)^{-2}, \quad 
z\in \bbC\backslash\bbR, \; x, x_0 \in (a,b).    \lb{2.44}   
\end{equation}
Insertion of \eqref{2.44} into \eqref{2.42} yields the interesting formula,
\begin{equation}
m_+(z, x_0) = - \lim_{x \uparrow b} \bigg[\int_{x_0}^x dx' \, p(x')^{-1} \theta(z,x',x_0)^{-2}\bigg]^{-1}, 
\quad z \in \bbC\backslash\bbR.     \lb{2.45} 
\end{equation}
If in addition, $\ell$ is nonoscillatory at $b$, analytic continuation of both sides in \eqref{2.45} 
with respect to $z$ permits one to extend \eqref{2.45} to all $z \in \bbC \backslash [\lambda_b, \infty)$, 
with $\lambda_b$ as in Theorem \ref{t2.13}\,$(ii)$. We also note that for $\lambda < \lambda_b$, 
the expression 
\begin{equation}
\bigg[\int_{x_0}^x dx' \, p(x')^{-1} \theta(\lambda,x',x_0)^{-2}\bigg]^{-1}, \quad \lambda < \lambda_b, 
\lb{2.46} 
\end{equation}
is strictly monotonically decreasing with respect to $x$ and hence the existence of the limit of 
the integral in \eqref{2.46} as $x \uparrow b$ is guaranteed and one obtains
\begin{equation}
m_+(\lambda, x_0) = - \bigg[\int_{x_0}^b dx' \, p(x')^{-1} \theta(\lambda,x',x_0)^{-2}\bigg]^{-1}, \quad 
\lambda < \lambda_b,     \lb{2.47} 
\end{equation}
with $\int_{x_0}^b dx' \, \cdots$ in \eqref{2.47} representing a Lebesgue integral.  

We first found \eqref{2.47} mentioned without proof in a paper by 
Kotani \cite{Ko85}. Kotani kindly alerted us to a paper by Kac and Krein \cite{KK74}, where 
such a formula is discussed near the end of their section 2, but the precise history of \eqref{2.47} 
is unknown to us at this point. We will provide a detailed derivation of \eqref{2.45}, \eqref{2.47} in 
the matrix-valued context in Section \ref{s3}. 

Next, replacing $\phi (z, \cdot \, ,x_0), \theta (z, \cdot \, , x_0)$ satisfying $\ell u = z u$ and 
\eqref{2.22} by the more general $\phi_{\alpha} (z, \cdot \, ,x_0), \theta_{\alpha} (z, \cdot \, , x_0)$ 
satisfying $\ell u = z u$ and 
\begin{align}
\begin{split} 
& \phi_{\alpha}(z, x_0 , x_0) = - \sin(\alpha), 
\quad [p(x) \phi_{\alpha}'(z, x , x_0)]|_{x=x_0} = \cos(\alpha), \\
& \theta_{\alpha}(z, x_0 , x_0) = \cos(\alpha), 
\quad [p(x) \theta_{\alpha}'(z, x_0 , x_0)]|_{x=x_0} = \sin(\alpha),   \lb{2.48} 
\end{split}
\end{align}
for some $\alpha \in [0, \pi)$, and hence replacing \eqref{2.27} by
\begin{align}
\begin{split} 
u_{\pm,\alpha} (z, x, x_0) = \theta_{\alpha} (z, x, x_0) 
+ \phi_{\alpha}(z, x, x_0) m_{\pm,\alpha}(z,x_0),&      \\
z \in \bbC\backslash\bbR, \; x, x_0 \in (a,b),&      \lb{2.49}
\end{split} 
\end{align}
for appropriate Weyl--Titchmarsh coefficients 
$m_{\pm, \alpha}(\, \cdot \,,x_0)$, one obtains along the lines leading to \eqref{2.44} for $z \in \bbC \backslash \bbR$, 
\begin{align}
\phi_{\alpha}(z,x,x_0) &= - \tan(\alpha) \theta_{\alpha}(z,x,x_0) +  \theta_{\alpha}(z,x,x_0)
\int_{x_0}^x \f{dx'}{p(x') \theta_{\alpha}(z,x',x_0)^2},    \\
& \hspace*{6.55cm} \alpha \in [0, \pi) \backslash \{\pi/2\},    \no \\
\theta_{\alpha}(z,x,x_0) &= - \cot(\alpha) \phi_{\alpha} (z,x,x_0) 
- \phi_{\alpha} (z,x,x_0) 
\int_{x_0}^x \f{dx'}{p(x') \phi_{\alpha}(z,x',x_0)^2},    \\
& \hspace*{7.3cm} \quad \alpha \in (0, \pi),   \no 
\end{align}
and using 
\begin{equation}
m_{+,\alpha}(z, x_0) = - \lim_{x \uparrow b} \theta_{\alpha}(z,x,x_0)/\phi_{\alpha}(z,x,x_0), \quad z \in \bbC\backslash\bbR, 
\end{equation}
one now obtains 
\begin{align}
& m_{+,\alpha}(z) = \begin{cases}
\bigg[\tan(\alpha) - \lim_{x \uparrow b}\int_{x_0}^x 
dx' \, p(x')^{-1} \theta_{\alpha}(z,x',x_0)^{-2}\bigg]^{-1}, \hspace*{-1mm} 
& \alpha \in [0,\pi) \backslash \{\pi/2\},   \\[2mm] 
\cot(\alpha) + \lim_{x \uparrow b} \int_{x_0}^x dx' \, p(x')^{-1} 
\phi_{\alpha}(z,x',x_0)^{-2}, & \alpha \in (0,\pi), 
\end{cases}     \no \\
& \hspace*{9.6cm} z \in \bbC \backslash \bbR,      \lb{2.50} 
\end{align} 
whenever $\ell$ is in the limit point case at $b$ (and similarly for $z < \lambda_{b,\alpha}$ for an appropriate $\lambda_{b,\alpha} \in \bbR$, and a Lebesgue integral $\int_{x_0}^b dx' \cdots$ if $\ell$ is also nonoscillatory at $b$). 

Replacing $\lim_{x \uparrow b}\int_{x_0}^x dx' \, \cdots$ by 
$- \lim_{x \downarrow a} \int_x^{x_0} dx' \, \cdots$, all formulas for 
$m_+(\, \cdot \, ,x_0)$ and $m_{+, \alpha}(\, \cdot \, ,x_0)$ immediately extend to  
$m_-(\, \cdot \, ,x_0)$ and $m_{-, \alpha}(\, \cdot \, ,x_0)$, assuming $\ell$ to be in the limit point case at $a$ (and analogously if $\ell$ is also nonoscillatory at 
$a$).

%%%%%%%%%%%%%%%%%%%%%%%%%%%%%%
%%%%%%%%%%%%%%%%%%%%%%%%%%%%%%
\section{Matrix-Valued Principal Solutions}  \lb{s3} 
%%%%%%%%%%%%%%%%%%%%%%%%%%%%%%
%%%%%%%%%%%%%%%%%%%%%%%%%%%%%%

This section is devoted to an extension of some of the basic results on principal 
solutions of the previous Section \ref{s2} to those associated with matrix-valued 
singular Sturm--Liouville operators. 

Matrix oscillation theory relevant to this paper originated with Hartman \cite{Ha57} and 
Reid \cite{Re58}. The literature on oscillation theory for systems of differential equations 
is so rich by now that we cannot possibly offer a comprehensive list of references. Hence, 
we restrict ourselves primarily to a number of monographs by Coppel \cite[Ch.\ 2]{Co71}, 
Hartman \cite[Sects.\ X.10, X.11]{Ha82}, Hille \cite[Sect.\ 9.6]{Hi69}, \cite{Hi76}, 
Kratz \cite[Chs.\ 4, 7]{Kr95}, Reid \cite[Ch.\ VII]{Re71}, \cite[Ch.\ V]{Re80}, 
Rofe-Beketov and Kholkin \cite[Chs.\ 1--4]{RK05},
and a few additional such as \cite{AL78a}, \cite{AS81}, \cite{BEM87}, \cite{Et69}, 
\cite{FJN03}, \cite{FJN03a}, \cite{FJN06}, \cite{FJNN11}, \cite{GJ89}, \cite{Ja64}, 
\cite{Re64}, \cite{RH77}, and \cite{Ya92}. 

The basic Weyl--Titchmarsh theory and general spectral theory for matrix-valued singular 
Sturm--Liouville operators as well as the more general case of singular Hamiltonian systems 
has been derived in detail by Hinton and Shaw \cite{HS81}--\cite{HS86} 
(we also refer to \cite[Ch.\ 10]{At64}, \cite{CG01}, \cite{CG02}, \cite{CG03}, \cite{CGHL00}, 
\cite{FJN03a}, \cite{GKM00}, \cite{GT00}, \cite[Sect.\ 10.7]{Hi69}, \cite{HS93}, 
\cite{HS97}, \cite{HS98}, \cite{Jo87}, \cite{JNO00}, \cite{KS88}, \cite{Kr89a}, \cite{Kr89b}, \cite{KR74}, 
\cite{LM03}, \cite{Or76}, \cite{Re96}, \cite{Sa92}, \cite{We87} for pertinent spectral results in this connection). 

In the following we take these developments for granted and only focus on the required changes 
in Section \ref{s2} in connection with principal solutions which are implied by inherent noncommutativity issues due to the matrix-valued setting.  

The basic assumptions for this section then read as follows:

%%%%%%%%%%
\begin{hypothesis} \lb{h3.1}
Let $-\infty\leq a<b\leq\infty$ and suppose that $P,Q,R \in \bbC^{m \times m}$, $m \in \bbN$, have 
$($Lebesgue$\,)$ measurable entries on $(a,b)$, and that 
\begin{align}
\begin{split} 
& P>0, R>0 \text{ a.e.\ on $(a,b)$, $Q=Q^*$ is self-adjoint},     \\
& P^{-1}, Q, R \in L^1_{loc} ((a,b); dx)^{m \times m}.      \lb{3.1}
\end{split} 
\end{align}
\end{hypothesis}
%%%%%%%%%%%%%

In addition, we introduce the Hilbert space of $\bbC^m$-valued elements,
\begin{align}
\begin{split}
& L^2((a,b); R dx; \bbC^m) = \bigg\{U = (U_1,\dots,U_m)^\top, \text{$U_k$ (Lebesgue) measurable,}  
 \\
& \hspace*{3.8cm} 1\leq k \leq m  \, \bigg| \, 
 \int_{(a,b)} dx \, (U(x), R(x) U(x))_{\bbC^m} < \infty\bigg\},      \lb{3.2} 
 \end{split} 
\end{align}
with associated scalar product
\begin{align}
\begin{split} 
& (U,V)_{L^2((a,b); R dx; \bbC^m)} = \int_{(a,b)} dx \, (U(x), R(x) V(x))_{\bbC^m}, \\
& \hspace*{4.05cm} U, V \in L^2((a,b); R dx; \bbC^m).   
\end{split} 
\end{align} 

Here ${(\dots)}^{\top}$ indicates a column vector in $\bbC^m$ and $(\, \cdot \, , \cdot \,)_{\bbC^m}$ represents the standard scalar product in $\bbC^m$, that is,
\begin{equation}
(w_1,w_2)_{\bbC^m} = \sum_{k=1}^m \ol{w_{1,k}} w_{2,k}, \quad 
w_j = (w_{j,1},\dots,w_{j,m})^{\top} \in \bbC^m, \; j = 1,2. 
\end{equation}

Given Hypothesis \ref{h3.1}, we consider the differential expression
\begin{equation}
L = R^{-1}\bigg(-\f{d}{dx}P\f{d}{dx} + Q\bigg), \quad 
-\infty\leq a<x<b\leq\infty,   \lb{3.4}
\end{equation}
and once more define the minimal operator $T_{min}$ and maximal operator 
$T_{max}$ in $L^2((a,b);R dx; \bbC^m)$ associated with $L$ by 
\begin{align}
& T_{min}u=L u,   \no \\
& u\in \dom (T_{min}) = \big\{v \in L^2((a,b);R dx; \bbC^m)\,\big|\, v, 
Pv' \in AC_{loc} ((a,b))^{m \times 1};     \lb{3.5} \\
& \hspace*{3cm}  \supp\,(v)\subset (a,b) \, \text{compact}; \, L v \in L^2((a,b);Rdx; \bbC^m)\big\},  \no \\
& T_{max}u=L u,    \no \\ 
& u\in \dom (T_{max})= \big\{v \in L^2((a,b);R dx; \bbC^m)\,\big|\, v, 
Pv'\in AC_{loc} ((a,b))^{m \times 1};    \lb{3.6}  \\ 
& \hspace*{3.05cm} L v \in L^2((a,b);R dx; \bbC^m)\big\},  \no 
\end{align}
respectively. Here $AC_{loc}((a,b))^{m \times n}$ denotes the set of $m \times n$ matrices,  
$m,n \in \bbN$, with locally absolutely continuous entries on $(a,b)$ (we will use the analogous 
in connection with $C((a,b))^{m \times n}$ below).  

Again, $T_{min}$ is densely defined and 
\begin{equation}
{T_{min}}^*=T_{max}, \quad T_{max}^* = \ol{T_{min}}. \lb{3.7}
\end{equation}

In the following, matrix-valued solutions $U(z,\cdot\,)$ of $L U = z U$ for some $z\in\bbC$, 
are always assumed to be distributional solutions, in addition, we either assume the vector-valued 
\begin{equation}
u(z,\cdot\,), \, Pu'(z,\cdot\,)\in AC_{loc} ((a,b))^{m \times 1},     \lb{3.8}
\end{equation}
or the $m \times m$ matrix-valued case 
\begin{equation}
U(z,\cdot\,), \, PU'(z,\cdot\,)\in AC_{loc} ((a,b))^{m \times m},    \lb{3.9}
\end{equation}
in this context. In fact, assuming 
$U, (PU)', V, (PV)' \in C((a,b))^{m \times m}$, one introduces the matrix-valued Wronskian of 
$u$ and $v$ by 
\begin{equation}
W(U,V)(x) = U(x)(PV)'(x) - (PU)'(x)V(x), \quad x\in (a,b),    \lb{3.10}
\end{equation}
and if $U_j$ are $m \times m$ matrix solutions of $L U_j = z_j U_j$, $z_j \in \bbC$, then 
\begin{equation}
\f{d}{dx}W(U_1(\ol{z_1},x)^*, U_2(z_2,x)) = (z_1 - z_2) U_1(\ol{z_1},x)^* R(x) U_2(z_2,x), 
\quad x \in (a,b).   \lb{3.11} 
\end{equation}

%%%%%%%%%%%%%% 
\begin{definition} \lb{d3.2}
Assume Hypothesis \ref{h3.1} and let $z\in \bbC \backslash \bbR$. Then $L$ is said 
to be in the {\it limit point case $($l.p.c.$)$ at $a$ $($resp., $b$$)$} if for some $($and hence for 
all\,$)$ $c \in (a,b)$, there exists a unique invertible $m \times m$ matrix-valued solution 
$($up to constant multiples by right multiplication with invertible $m \times m$ matrices\,$)$ 
$\Psi_- (z, \cdot \,)$ $($resp., $\Psi_+ (z, \cdot \, )$$)$ of $L U = z U$ such that the $m \times m$ 
matrices 
\begin{equation}
\int_a^c dx \,  \Psi_- (z,x)^* R(x) \Psi_- (z,x)    
\, \text{ \bigg(resp., $\int_c^b dx \,  \Psi_+ (z,x)^* R(x) \Psi_+ (z,x)$\bigg)}
\lb{3.12} 
\end{equation} 
exist. 
\end{definition}
%%%%%%%%%%%%%% 

Again, the constant invertible $m \times m$ matrices permitted in connection with right multiplication in Definition \ref{d3.2} (while of course $x$-independent) are generally $z$-dependent. 

Given the analogy to the scalar case $m=1$, any solution of $L U = z U$ satisfying the square 
integrability condition \eqref{3.12} in a neighborhood of $a$ (resp., $b$), independent of 
uniqueness up to right multiplication by constant invertible matrices, will be called a 
($m \times m$ matrix-valued) Weyl--Titchmarsh solution of $L U = z U$ near $a$ (resp., $b$). 

%%%%%%%%%%%%%
\begin{remark}\lb{r3.3}
Assuming there exists a $\lambda_a \in \bbR$ (resp., $\lambda_b\in \bbR$) for which $(u,[T_{min} - \lambda_a I] u)_{L^2((a,b); R dx; \bbC^m)}\geq 0$ for all $u \in \dom(T_{min})$ with $u=0$ in a neigborhood of $b$ (resp., $(u,[T_{min} - \lambda_b I] u)_{L^2((a,b); R dx; \bbC^m)}\geq 0$ for all $u \in \dom(T_{min})$ with $u=0$ in a neigborhood of $a$) and that $L$ is in the limit point case at $a$ (resp., $b$), then 
$\Psi_-$ (resp., $\Psi_+$) in \eqref{3.12} analytically extends to $z < \lambda_a$ (resp., $z < \lambda_b$). In particular, for fixed $x \in (a,b)$, $\Psi_- (\, \cdot \,, x)$ (resp., $\Psi_+ (\, \cdot \,, x)$) is analytic in 
$\bbC \backslash [\lambda_a,\infty)$ (resp., $\bbC \backslash [\lambda_b,\infty)$) (cf.\ the analogous 
Remark \ref{r2.12} in the scalar context). 
\end{remark} 
%%%%%%%%%%%%%

Next, we turn to a brief summary of the principal facts of Weyl--Titchmarsh theory in the present 
matrix-valued context. Again, we fix a reference point $x_0 \in (a,b)$, and introduce the normalized 
$m \times m$ matrix-valued solutions $\Phi(z, \cdot \, , x_0)$ and $\Theta(z, \cdot \, , x_0)$ 
of $L U = z U$ by 
\begin{align}
\begin{split} 
& \Phi(z, x_0 , x_0) = 0, \quad [P(x) \Phi'(z, x , x_0)]|_{x=x_0} = I_m, \\
& \Theta(z, x_0 , x_0) = I_m, \quad [P(x) \Theta'(z, x , x_0)]|_{x=x_0} = 0,   \lb{3.13} 
\end{split}
\end{align}
and note again that for fixed $x \in (a,b)$, $\Phi(\, \cdot \,, x,x_0)$ and 
$\Theta(\, \cdot \,, x,x_0)$ are entire with respect to $z \in \bbC$. Moreover, one verifies 
(cf., e.g., \cite[Sect.\ 2]{CG02}, \cite[Sect.\ 2]{GWZ13}) that for any $z \in \bbC$, $x_0 \in (a,b)$, 
\begin{align}
W (\Theta({\ol z}, \cdot \, , x_0)^*, \Phi(z, \cdot \, , x_0)) = I_m,    \lb{3.14} \\
W (\Phi({\ol z}, \cdot \, , x_0)^*, \Theta(z, \cdot \, , x_0)) = I_m,   \lb{3.14a} \\
W (\Phi({\ol z}, \cdot \, , x_0)^*, \Phi(z, \cdot \, , x_0)) = 0,       \lb{3.14b} \\
W (\Theta({\ol z}, \cdot \, , x_0)^*, \Theta(z, \cdot \, , x_0)) = 0,    \lb{3.14c}
\end{align}  
as well as, 
\begin{align}
& \Phi(z,x, x_0) \Theta({\ol z},x,x_0)^* - \Theta(z,x,x_0) \Phi({\ol z},x, x_0)^* = 0,    \lb{3.14d} \\
& [P(x) \Phi'(z,x, x_0)] [P(x) \Theta'({\ol z},x,x_0)]^* - [P(x) \Theta'(z,x,x_0)] [P(x) \Phi'({\ol z},x, x_0)]^* 
= 0,      \lb{3.14e} \\
& [P(x) \Phi'(z,x, x_0)]  \Theta({\ol z},x,x_0)^* - [P(x) \Theta'(z,x,x_0)] \Phi({\ol z},x, x_0)^* = I_m,  
\lb{3.14f} \\
&  \Theta(z,x,x_0) [P(x) \Phi'({\ol z},x, x_0)]^* - \Phi(z,x, x_0) [P(x) \Theta'({\ol z},x,x_0)]^* = I_m.   
\lb{3.14g} \\
\end{align}

Consequently, if $U_{\pm}(z, \cdot \,)$ denote {\it any} invertible square integrable $m \times m$ 
matrix-valued solutions of $L U = z U$ in a neighborhood of $a$ and $b$ in the sense that for some 
(and hence for all) $c \in (a,b)$, the $m \times m$ matrices 
\begin{equation}
\int_a^c dx \,  U_- (z,x)^* R(x) U_- (z,x), \quad \int_c^b dx \,  U_+ (z,x)^* R(x) U_+ (z,x), 
\lb{3.15} 
\end{equation} 
exist, one obtains 
\begin{equation}
U_{\pm} (z, x) U_{\pm} (z, x_0)^{-1} = \Theta(z, x, x_0) + \Phi(z, x, x_0) M_{\pm}(z,x_0), \quad 
z \in \bbC\backslash\bbR, \; x, x_0 \in (a,b),   \lb{3.16}
\end{equation}
for some $m \times m$ matrix-valued coefficients $M_{\pm}(z,x_0)\in \bbC^{m \times m}$, the 
Weyl--Titchmarsh matrices associated with $L$.

Again, the matrix $M_- (z,x_0)$ (resp., $M_+ (z,x_0)$) is uniquely determined if and only if $L$ 
is in the limit 
point case at $a$ (resp., $b$). In this case $U_- (z, \cdot \,)$ (resp., $U_+ (z, \cdot \,)$) 
coincides up to right multiplication by $z$-dependent constant matrices with $\Psi_- (z, \cdot \,)$ 
(resp., $\Psi_+ (z,x_0)$) in \eqref{3.12}. 

Moreover, 
$\pm M_{\pm}(\, \cdot \, , x_0)$ are $m \times m$ Nevanlinna--Herglotz matrices, that is, 
for all $x_0 \in (a,b)$, 
\begin{equation}
M_{\pm}(\, \cdot \, , x_0) \, \text{ are analytic in $\bbC \backslash \bbR$,}   \quad 
{\rm rank} \, (M_{\pm}(z , x_0)) =m, \; z \in \bbC_+,     \lb{3.17}
\end{equation} 
and 
\begin{equation}
\pm \Im(M_{\pm}(z,x_0)) > 0, \quad z \in \bbC_+.     \lb{3.18} 
\end{equation} 
(Here, in obvious notation, $\Im (M) = (2i)^{-1} (M - M^*)$, $M \in \bbC^{m \times m}$.) 
In addition, for all $x_0 \in (a,b)$, $M_{\pm}(\, \cdot \, , x_0)$ satisfy  
\begin{equation}
M_{\pm}(z,x_0) = M_{\pm} ({\ol z},x_0)^*, \quad z \in \bbC_+.      \lb{3.19} 
\end{equation}
Finally, one also infers for all $z \in \bbC \backslash \bbR$, $x_0 \in (a,b)$, 
\begin{align}
& W(U_+ ({\ol z}, \cdot \,)^*, U_- (z, \cdot \,)) 
= U_+ ({\ol z}, x_0)^* [M_-(z,x_0) - M_+(z,x_0)] U_- (z, x_0),    \lb{3.20} \\ 
& M_{\pm}(z,x_0) = P(x_0) U_{\pm}^{\prime} (z, x_0) U_{\pm} (z, x_0)^{-1}.   \lb{3.21}
\end{align}

Unraveling the crucial identities \eqref{3.19} and \eqref{3.21} results in the fundamental fact 
\begin{equation}
U_{\pm}({\ol z}, x)^* [P(x) U_{\pm}'(z,x)] = [P(x) U_{\pm}'({\ol z}, x)]^* U_{\pm}(z,x), \quad 
z \in \bbC \backslash \bbR,     \lb{3.22}
\end{equation} 
for $x \in (a,b)$. In particular,
\begin{equation}
W(U_{\pm} ({\ol z}, \cdot \,)^*, U_{\pm} (z, \cdot \,)) = 0.    \lb{3.23} 
\end{equation}

In other words, invertible square integrable $m \times m$ matrix-valued 
solutions of $L U = z U$ in the sense of \eqref{3.15} closely resemble {\it prepared} solutions in 
the sense of Hartman \cite{Ha57}. We use the term ``closely resemble'' as Hartman avoids the use 
of a complex spectral parameter and focuses on $z=0$ instead. The term ``prepared'' did not stick 
as one finds also the notions of {\it conjoined, isotropic}, and {\it self-conjugate} 
solutions in the literature in connection with the property \eqref{3.22} (resp., \eqref{3.23}). Be 
that as it may, isolating property \eqref{3.22} was definitely a crucial step in the spectral analysis 
of systems of differential equations as the following observations will demonstrate. 

%%%%%%%%%%
\begin{definition} \lb{d3.3}
Assume Hypothesis \ref{h3.1}, let $\lambda \in \bbR$, and suppose that $U(\lambda, \cdot \,)$ 
is an $m \times m$ matrix-valued solution of $L U = \lambda U$. Then $U(\lambda, \cdot \,)$ is 
called {\it self-conjugate} if 
\begin{equation}
W(U(\lambda, \cdot \,)^*, U(\lambda, \cdot \,)) = 0,     \lb{3.24}  
\end{equation}
equivalently, if 
\begin{equation}
U(\lambda, x)^* [P(x) U'(\lambda,x)] = [P(x) U'(\lambda, x)]^* U(\lambda,x), \quad 
x \in (a,b).      \lb{3.25}
\end{equation} 
\end{definition} 
%%%%%%%%%%

That is, $U(\lambda, x)^* [P(x) U'(\lambda,x)]$ is self-adjoint in \eqref{3.25} for all $x \in (a,b)$,  
and this is why we thought it most natural to follow those who adopted the term ``self-conjugate'' in 
connection with Definition \ref{d3.3}. While we 
could have extended Definition \ref{d3.3} immediately to $\lambda \in \bbC$ along the lines of 
\eqref{3.22}, \eqref{3.23}, we are eventually aiming at principal matrix-valued solutions which are 
typically considered for $\lambda \in \bbR$. 

Next, let $V_- (z,\cdot \,, c)$ (resp., $V_-+ (z,\cdot \,, c)$) be $m \times m$ matrix-valued solutions of 
$L U = z U$, invertible on the interval $(a,c]$ (resp., $[c,b)$) for some $c \in (a,b)$, satisfying 
property \eqref{3.22} on $(a,c]$ (resp., $[c,b)$). In particular,  
\begin{equation}
W(V_{\pm} ({\ol z}, \cdot \, ,c)^*, V_{\pm} (z, \cdot \, ,c)) = 0.   \lb{3.26} 
\end{equation}
We introduce 
\begin{align}
\begin{split} 
& W_-(z,x,c) = V_-(z,x,c) \bigg[C_{-,1}   \\
& \quad - \int_x^c dx' V_+(z,x',c)^{-1} P(x')^{-1} \big[V_+({\ol z}, x',c)^{-1}\big]^* C_{-,2}\bigg],    
\quad x \in (a,c],     \lb{3.28} 
\end{split} 
\end{align} 
and 
\begin{align}
\begin{split}
& W_+(z,x,c) = V_+(z,x,c) \bigg[C_{+,1}   \\
& \quad + \int_c^x dx' V_+(z,x',c)^{-1} P(x')^{-1} \big[V_+({\ol z}, x',c)^{-1}\big]^* C_{+,2}\bigg],     
\quad x \in [c, b),     \lb{3.27} 
\end{split} 
\end{align} 
where $C_{\pm,j} \in \bbC^{m \times m}$, $j=1,2$. Then straightforward computations yield
\begin{align}
& L W_- (z, \cdot \, , c) = z W_- (z, \cdot \, , c) \, \text{ on } \, (a,c],    \lb{3.30} \\
& L W_+ (z, \cdot \, , c) = z W_+ (z, \cdot \, , c) \, \text{ on } \, [c,b),    \lb{3.29} \\
& W(V_{\pm} ({\ol z}, \cdot \, ,c)^*, W_{\pm} (z, \cdot \, ,c)) = C_{\pm, 2},    \lb{3.31} \\
& W(W_{\pm} ({\ol z}, \cdot \, ,c)^*, W_{\pm} (z, \cdot \, ,c)) 
= C_{\pm, 1}^* C_{\pm, 2} - C_{\pm, 2}^* C_{\pm, 1}.     \lb{3.32} 
\end{align}

At this point we introduce the notion of matrix-valued principal solutions of $L U = \lambda U$, 
$\lambda \in \bbR$.

%%%%%%%%%%%%
\begin{definition} \lb{d3.4}
Assume Hypothesis \ref{h3.1} and let $\lambda \in \bbR$. \\ 
$(i)$ Suppose that $U_a(\lambda, \cdot \,)$ is a self-conjugate 
solution of $L U = \lambda U$ that is invertible on $(a,c]$ 
for some $c \in (a,b)$. Then $U_a(\lambda,\cdot \,)$ is called a {\it principal solution} 
of $L U = \lambda U$ at $a$ if 
\begin{equation}
\lim_{x \downarrow a} \bigg[\int_x^c dx' \, 
U_a(\lambda,x')^{-1} P(x')^{-1} \big[U_a(\lambda,x')^{-1}\big]^*\bigg]^{-1} = 0.
\end{equation}   
$(ii)$ Suppose that $U_b(\lambda, \cdot \,)$ is a self-conjugate 
solution of $L U = \lambda U$ that is invertible on $[c,b)$ 
for some $c \in (a,b)$. Then $U_b(\lambda,\cdot \,)$ is called a {\it principal solution} 
of $L U = \lambda U$ at $b$ if 
\begin{equation}
\lim_{x \uparrow b} \bigg[\int_c^x dx' \, 
U_b(\lambda,x')^{-1} P(x')^{-1} \big[U_b(\lambda,x')^{-1}\big]^*\bigg]^{-1} = 0.
\end{equation}   
\end{definition} 
%%%%%%%%%%%%

Principal solutions, if they exist, are unique up to right multiplication with invertible constant 
$m \times m$ matrices:

%%%%%%%%%%%%
\begin{lemma} \lb{l3.5}
Assume Hypothesis \ref{h3.1} and let $\lambda \in \bbR$. Then if a principal solution 
$U_a(\lambda,\cdot \,)$ at $a$ $($resp., $U_b(\lambda,\cdot \,)$ at $b$$)$ of 
$L U = \lambda U$ exists, 
it is unique up to right multiplication with an invertible $($generally, $\lambda$-dependent\,$)$ constant $m \times m$ matrix.
\end{lemma}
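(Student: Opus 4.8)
The plan is to reduce to the endpoint $b$ (the argument at $a$ is identical after reflecting the interval) and to show that any two principal solutions of $LU=\lambda U$ at $b$ differ by right multiplication by a constant invertible matrix. So suppose $U_b(\lambda,\cdot\,)$ and $\wti U_b(\lambda,\cdot\,)$ are both principal at $b$ and both invertible on a common interval $[c,b)$ (pass to the larger of the two intervals on which each is invertible). Setting $F(x):=U_b(\lambda,x)^{-1}\wti U_b(\lambda,x)$ on $[c,b)$, so that $\wti U_b=U_b F$, I would first use that both solve $LU=\lambda U$ with the \emph{same} real parameter: by \eqref{3.11} the Wronskian $K:=W(U_b(\lambda,\cdot\,)^*,\wti U_b(\lambda,\cdot\,))$ is a constant $m\times m$ matrix, and expanding \eqref{3.10} for $\wti U_b=U_b F$ while invoking the self-conjugacy \eqref{3.25} of $U_b(\lambda,\cdot\,)$ gives $U_b(\lambda,x)^*P(x)U_b(\lambda,x)\,F'(x)=K$, hence the matrix variation-of-constants representation
\[
F(x)=F(c)+G(x)K,\qquad G(x):=\int_c^x dx'\,U_b(\lambda,x')^{-1}P(x')^{-1}[U_b(\lambda,x')^{-1}]^*,\quad x\in[c,b)
\]
(cf.\ \eqref{3.27}). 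The self-conjugacy \eqref{3.24} of $\wti U_b(\lambda,\cdot\,)$ forces $F(c)^*K=K^*F(c)$; replacing $U_b(\lambda,\cdot\,)$ by $U_b(\lambda,\cdot\,)F(c)$, which is again self-conjugate, invertible on $[c,b)$ and principal at $b$ (and for which $K$ is replaced by the self-adjoint matrix $F(c)^*K$), I may and do assume $F(c)=I_m$ and $K=K^*$. The lemma then amounts to the claim $K=0$, since in that case $F\equiv I_m$.

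Two remarks fix the setting. First, $U_b(\lambda,\cdot\,)$ being principal at $b$ is exactly the statement that $G(x)>0$ for $x\in(c,b)$ and $G(x)^{-1}\to 0$ as $x\uparrow b$, i.e.\ the smallest eigenvalue of $G(x)$ tends to $+\infty$. Second, on $[c,b)$ one has $\wti U_b(\lambda,\cdot\,)=U_b(\lambda,\cdot\,)(I_m+GK)$ with $I_m+GK$ invertible, and therefore $\wti U_b(\lambda,x)^{-1}P(x)^{-1}[\wti U_b(\lambda,x)^{-1}]^*=(I_m+G(x)K)^{-1}G'(x)(I_m+K G(x))^{-1}$, with $G'(x):=U_b(\lambda,x)^{-1}P(x)^{-1}[U_b(\lambda,x)^{-1}]^*$.

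The core of the proof is to exclude $K\neq 0$. Assuming $\rank(K)=r\ge 1$, I would decompose $\bbC^m=\ker(K)\oplus\ran(K)=:V_0\oplus V_1$ orthogonally — admissible precisely because $K=K^*$ — so that in this splitting $K=\diag(0,K_1)$ with $K_1$ invertible and self-adjoint on $V_1$, and $I_m+G(x)K$ becomes block upper triangular with lower-right block $I_{V_1}+G_{11}(x)K_1$, where $G_{11}(x)$ is the compression of $G(x)$ to $V_1$. Reading off the $V_1$-block of $\wti G(x):=\int_{c_0}^x dx'\,\wti U_b(\lambda,x')^{-1}P(x')^{-1}[\wti U_b(\lambda,x')^{-1}]^*$ for a fixed $c_0\in(c,b)$, and telescoping via the matrix identity $(I_{V_1}+G_{11}K_1)^{-1}G_{11}'(I_{V_1}+K_1G_{11})^{-1}=\frac{d}{dx}(G_{11}^{-1}+K_1)^{-1}$ — valid since $G_{11}(x)$, as a compression of the positive matrix $G(x)$, is invertible for $x>c$ with smallest eigenvalue at least that of $G(x)$, and since $I_{V_1}+G_{11}(x)K_1$ is invertible on $[c,b)$ — I obtain
\[
[\wti G(x)]_{11}=(G_{11}(x)^{-1}+K_1)^{-1}-(G_{11}(c_0)^{-1}+K_1)^{-1}\ \to\ K_1^{-1}-(G_{11}(c_0)^{-1}+K_1)^{-1}\quad\text{as } x\uparrow b,
\]
a finite limit, because $G_{11}(x)^{-1}\to 0$. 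Consequently $(v,\wti G(x)\,v)_{\bbC^m}=(v,[\wti G(x)]_{11}\,v)_{\bbC^m}$ stays bounded for any unit vector $v\in V_1$, so the smallest eigenvalue of the positive matrix $\wti G(x)$ stays bounded and $\wti G(x)^{-1}$ does not tend to $0$ as $x\uparrow b$; this contradicts Definition \ref{d3.4}\,$(ii)$ applied to $\wti U_b(\lambda,\cdot\,)$. Hence $K=0$, whence $F\equiv F(c)$ and $\wti U_b(\lambda,\cdot\,)=U_b(\lambda,\cdot\,)C$ with $C=U_b(\lambda,c)^{-1}\wti U_b(\lambda,c)$ constant and invertible.

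The only genuine obstacle I anticipate is the case $0<\rank(K)<m$: in contrast to the scalar Lemma \ref{l2.7} one cannot simply divide by $K$, and one must exploit the block-triangular structure above, which in turn relies on the self-adjointness of $K$ distilled from the self-conjugacy of $\wti U_b(\lambda,\cdot\,)$ after the normalization $F(c)=I_m$. Everything else — the Wronskian computations producing $F(x)=F(c)+G(x)K$, and the telescoping bookkeeping — runs parallel to the scalar developments of Section \ref{s2}.
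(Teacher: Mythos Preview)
Your proof is correct. The paper itself does not supply an argument for Lemma~\ref{l3.5}; it simply refers to \cite[Theorem~2.3]{Co71} and \cite[Theorem~10.5\,$(ii)$]{Ha82}. What you have written is a self-contained version of the classical variation-of-constants reduction underlying those references: the representation $\wti U_b = U_b\bigl(F(c) + GK\bigr)$ with constant Wronskian $K$ is exactly the matrix analogue of \eqref{2.43}, and your normalization step (replacing $U_b$ by $U_b F(c)$ so that $K$ becomes self-adjoint) is precisely what makes the orthogonal block decomposition on $\ker(K)\oplus\ran(K)$ available. The telescoping identity $(I+G_{11}K_1)^{-1}G_{11}'(I+K_1G_{11})^{-1}=\tfrac{d}{dx}(G_{11}^{-1}+K_1)^{-1}$ then yields boundedness of the $V_1$-compression of $\wti G$, contradicting principality of $\wti U_b$. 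This is essentially the mechanism behind Coppel's argument as well, though the bookkeeping there is organized somewhat differently (Coppel works with the limit of $U_b^{-1}\wti U_b\,G^{-1}$ rather than isolating a compressed block). One small point worth making explicit for a reader: the invertibility of $G_{11}(c_0)^{-1}+K_1$, needed for the limit to be well defined, follows from that of $I_{V_1}+G_{11}(c_0)K_1$, which is the lower-right diagonal block of the invertible block-triangular matrix $I_m+G(c_0)K$; you use this implicitly and it is correct.
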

%%%%%%%%%%%%

This follows from \cite[Theorem\ 2.3]{Co71}, or \cite[Theorem\ 10.5\,$(ii)$]{Ha82}. 

%%%%%%%%%%%%
\begin{lemma} \lb{l3.6}
Assume Hypothesis \ref{h3.1} and let $\lambda \in \bbR$. Suppose that $U_0(\lambda,\cdot \,)$ 
is a self-conjugate solution of $L U = \lambda U$ that is invertible on $(a,c]$ $($resp., $[c,b)$$)$ 
and let $V(\lambda,\cdot \,)$ be any $m \times m$ matrix-valued solution of $L U = \lambda U$. 
Then $U_0(\lambda,\cdot \,)$ is a principal solution at $b$ $($resp., $a$$)$ and 
\begin{equation}
W(U_0(\lambda, \cdot \,)^*, V(\lambda,\cdot \,)) \, \text{ is invertible,}      \lb{3.35}
\end{equation}
if and only if $V(\lambda,\cdot \,)$ is invertible near $a$ $($resp., $b$$)$ and 
\begin{equation}
\lim_{x \downarrow a} V(\lambda,x)^{-1} U_0(\lambda,x) = 0 \; 
\text{ $($resp., $\lim_{x \uparrow b} V(\lambda,x)^{-1} U_0(\lambda,x) = 0$$)$.}  \lb{3.36} 
\end{equation}
If \eqref{3.36} holds, then, for appropriate $c_a, c_b \in (a,b)$,
\begin{align}
& \bigg[\lim_{x \downarrow a} \int_x^{c_a} dx' \, V(\lambda,x')^{-1} P(x')^{-1} 
\big[V(\lambda,x')^{-1}\big]^*\bigg]^{-1}  \, 
\text{ exists and is invertible}      \lb{3.37} \\ 
& \bigg(\text{resp., $\lim_{x \uparrow b} \bigg[\int_{c_b}^x dx' \, V(\lambda,x')^{-1} P(x')^{-1} \big[V(\lambda,x')^{-1}\big]^* \bigg]^{-1}$ exists and is invertible}\bigg).      \no 
\end{align}
\end{lemma}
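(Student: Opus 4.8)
The plan is to prove the matrix version along the lines of the scalar argument, exploiting the factorization identities \eqref{3.28}, \eqref{3.27} that produce second ``linearly independent'' self-conjugate solutions from a given one. I will only write the statement at $b$; the statement at $a$ is entirely analogous after reflection. First I would observe that, since $V(\lambda,\cdot\,)$ and $U_0(\lambda,\cdot\,)$ both solve $LU=\lambda U$ and $U_0$ is invertible on $[c,b)$, we may write $V(\lambda,x) = U_0(\lambda,x)\big[C_1 + C_2\int_c^x dx'\,U_0(\lambda,x')^{-1}P(x')^{-1}[U_0(\lambda,x')^{-1}]^*\big]$ on $[c,b)$ for suitable constant matrices $C_1,C_2\in\bbC^{m\times m}$; this is exactly the content of \eqref{3.27} with $V_+ = U_0$, and one identifies $C_2$ via \eqref{3.31} as $C_2 = W(U_0(\lambda,\cdot\,)^*, V(\lambda,\cdot\,))$ (using that $\lambda$ is real, so $\ol\lambda = \lambda$). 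Thus the hypothesis \eqref{3.35} says precisely that $C_2$ is invertible.

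Next I would handle the ``only if'' direction. Assume $U_0(\lambda,\cdot\,)$ is principal at $b$ and $C_2 = W(U_0(\lambda,\cdot\,)^*,V(\lambda,\cdot\,))$ is invertible. Abbreviating $J(x) = \int_c^x dx'\,U_0(\lambda,x')^{-1}P(x')^{-1}[U_0(\lambda,x')^{-1}]^*$, the principal-solution hypothesis \eqref{1.7} says $J(x)^{-1}\to 0$, i.e. $J(x)$ is eventually invertible with $\|J(x)^{-1}\|\to 0$; equivalently the smallest eigenvalue of the (self-adjoint, monotone increasing) matrix $J(x)$ tends to $+\infty$. Then $V(\lambda,x) = U_0(\lambda,x)[C_1 + C_2 J(x)] = U_0(\lambda,x)C_2[C_2^{-1}C_1 + J(x)]$, and since $C_2^{-1}C_1 + J(x)$ is invertible for $x$ near $b$ (dominated by the growing $J(x)$) and $U_0(\lambda,x), C_2$ are invertible, $V(\lambda,x)$ is invertible near $b$. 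Moreover $V(\lambda,x)^{-1}U_0(\lambda,x) = [C_2^{-1}C_1 + J(x)]^{-1}C_2^{-1} \to 0$ because $[C_2^{-1}C_1+J(x)]^{-1}\to 0$. This gives \eqref{3.36}. For \eqref{3.37} one computes $V(\lambda,x')^{-1}P(x')^{-1}[V(\lambda,x')^{-1}]^*$ in terms of the derivative of a Schur-complement-type expression and shows that $\int_{c_b}^x V(\lambda,x')^{-1}P(x')^{-1}[V(\lambda,x')^{-1}]^*\,dx'$ converges to an invertible limit as $x\uparrow b$; the key computation is that $\frac{d}{dx}\big([C_2^{-1}C_1+J(x)]^{-1}\big) = -[C_2^{-1}C_1+J(x)]^{-1} J'(x)[C_2^{-1}C_1+J(x)]^{-1}$, and $J'(x) = U_0(\lambda,x)^{-1}P(x)^{-1}[U_0(\lambda,x)^{-1}]^*$, so the integrand telescopes (after conjugating by the constant $C_2$) to the negative derivative of $(C_2[C_2^{-1}C_1+J(x)]C_2^*)^{-1}$; integrating from $c_b$ to $x$ and letting $x\uparrow b$, the contribution at $b$ vanishes by \eqref{3.36}, leaving an invertible limit equal to the value at $c_b$.

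For the ``if'' direction I would run this in reverse: assume $V(\lambda,\cdot\,)$ is invertible near $b$ and $V(\lambda,x)^{-1}U_0(\lambda,x)\to 0$. From the representation $V(\lambda,x)^{-1}U_0(\lambda,x) = [C_1+C_2 J(x)]^{-1}$, the hypothesis forces $[C_1 + C_2 J(x)]^{-1}\to 0$, hence $C_2 J(x)$ (and therefore $J(x)$) must be eventually invertible with $\|J(x)^{-1}\|\to 0$, which is exactly the statement that $U_0(\lambda,\cdot\,)$ is a principal solution at $b$ in the sense of \eqref{1.7}; and invertibility of $J(x)$ for large $x$ together with $J(x)^{-1}\to 0$ also forces $C_2$ to be invertible (if $C_2$ had a kernel, $[C_1+C_2 J(x)]^{-1}$ restricted appropriately could not vanish). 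That $C_2 = W(U_0(\lambda,\cdot\,)^*,V(\lambda,\cdot\,))$ being invertible is \eqref{3.35}.

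\textbf{Main obstacle.} The genuinely noncommutative point — and where I expect to spend the most care — is the matrix monotonicity bookkeeping: turning ``$J(x)^{-1}\to 0$'' into usable statements about $[C_1+C_2 J(x)]^{-1}$ and about convergence of $\int V^{-1}P^{-1}(V^{-1})^*$, since $C_1, C_2$ need not commute with $J(x)$ and $J(x)$ is only monotone as a quadratic form. The clean way around this is the substitution $V = U_0 C_2[\,C_2^{-1}C_1 + J(x)\,]$, which reduces everything to the single self-adjoint increasing matrix $K(x) := \Re(C_2^{-1}C_1) + J(x)$ (the skew-adjoint part of $C_2^{-1}C_1$ being harmless because $J$ dominates), and to the algebraic identity $\frac{d}{dx}K(x)^{-1} = -K(x)^{-1}J'(x)K(x)^{-1}$; this is the step I would want to get exactly right, invoking \cite[Theorem\ 2.3]{Co71} or \cite[Theorem\ 10.5]{Ha82} as in Lemma \ref{l3.5} wherever a uniqueness or invertibility fact about self-conjugate solutions is needed to justify the representation \eqref{3.27} and the identification of $C_2$ with the Wronskian.
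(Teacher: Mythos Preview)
The paper does not give a proof at all; it simply cites \cite[Proposition\ 2.4]{Co71} and \cite[Theorem\ 10.5\,$(iii)$]{Ha82}. Your proposal is essentially an attempt to reconstruct that argument, and the overall strategy (express $V$ via the reduction-of-order formula \eqref{3.27}, identify the Wronskian with $C_2$ via \eqref{3.31}, and exploit monotonicity of $J(x)$) is the right one.

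Two points of care. First, a minor bookkeeping slip: matching \eqref{3.27} gives $V=U_0[C_1+J C_2]$, not $U_0[C_1+C_2 J]$; the subsequent factorizations should be $V=U_0[C_1C_2^{-1}+J]C_2$, so $V^{-1}U_0=C_2^{-1}[C_1C_2^{-1}+J]^{-1}$. The equivalence argument survives this reordering unchanged.

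Second, and this is the genuine issue you flagged as the ``main obstacle'': your telescoping identity for \eqref{3.37} does \emph{not} hold as written unless $V$ is itself self-conjugate. With $B(x)=C_1+J(x)C_2$ one has
\[
V^{-1}P^{-1}(V^{-1})^{*}=B^{-1}J'\,(B^{*})^{-1},
\]
and your proposed antiderivative $-\big(C_2[C_2^{-1}C_1+J]C_2^{*}\big)^{-1}$ differentiates to $B^{-1}J'\,B^{-1}$ up to conjugation, not $B^{-1}J'(B^{*})^{-1}$; these coincide only when $C_1C_2^{-1}$ is self-adjoint, i.e.\ (by \eqref{3.32}) when $V$ is self-conjugate. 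Splitting off the skew part of $C_2^{-1}C_1$ does not repair this, since the skew part sits \emph{inside} the inverse. The clean fix, which is what Coppel actually does, is to reverse roles once you know $V$ is invertible near $b$: write $U_0=V A$ with $A=V^{-1}U_0$, compute $W(V^{*},U_0)=-C_2^{*}$, and obtain $A(x)=A(c_b)-\int_{c_b}^{x}V^{-1}P^{-1}(V^{*})^{-1}\,dx'\cdot C_2^{*}$ in the self-conjugate case, from which $A(x)\to 0$ immediately gives the finite invertible limit $A(c_b)(C_2^{*})^{-1}$ for the integral. For general $V$ an additional first-order term involving $W(V^{*},V)$ appears, and you should either restrict to self-conjugate $V$ (which suffices for all applications in the paper) or follow the cited references verbatim for the general case.
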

%%%%%%%%%%%%

Again, this follows from \cite[Proposition\ 2.4]{Co71}, or \cite[Theorem\ 10.5\,$(iii)$]{Ha82}. 

%%%%%%%%%%%%
\begin{definition} \lb{d3.7}
Assume Hypothesis \ref{h3.1}. The equation $Lu = zu$, $z \in \bbC$, is called {\it disconjugate} 
on the interval $J \subseteq (a,b)$ if every nontrivial $($ie., not identically vanishing\,$)$ solution 
$v$ of $Lu = zu$ vanishes at most once on $J$. 
\end{definition} 
%%%%%%%%%%%%

It is well known (cf., e.g., \cite[Sect.\ 2.1]{Co71}, \cite[Theorem\ XI.10.1]{Ha82}) that $Lu = zu$, 
$z\in\bbC$, is disconjugate on $J \subseteq (a,b)$ if and only if
\begin{align}
\begin{split} 
& \text{for any $x_j \in J$ and any $\eta_j \in \bbC^m$, $Lu = zu$ has a unique solution 
$v(z, \cdot \,)$}   \\
& \quad \text{satisfying $v(z,x_j) = \eta_j$, $j=1,2$.} 
\end{split} 
\end{align}
Equivalently, $Lu = zu$, $z\in\bbC$, is disconjugate on $J \subseteq (a,b)$ if and only if
\begin{align}
\begin{split} 
& \text{for any $x_j \in J$, $j=1,2$, $x_1 \neq x_2$, $Lu = zu$ has a no nontrivial solution}   \\
& \quad \text{$v(z, \cdot \,)$ satisfying $v(z,x_j) = 0$, $j=1,2$.} 
\end{split} 
\end{align}

We also recall the following useful result:

%%%%%%%%%%%%
\begin{theorem}  [\cite{Co71}, Sect.\ 2.1--2.2, 
\cite{Ha82}, Theorem\ XI.10.2] \lb{t3.8}  Assume Hypothesis \ref{h3.1}. \\
$(i)$ If $J$ is a closed half-line $($i.e., $J=[c,b)$ or $J=(a,c]$ for some $c \in (a,b)$$)$, then  
$Lu =z u$, $z\in\bbC$, is disconjugate on $J$ if and only if there exists a self-conjugate solution 
$U(z, \cdot \,)$ of $LU =z U$ such that $U(z,\cdot\,)$ is invertible on the interior of $J$. \\[1mm]
$(ii)$ If $J$ is a closed bounded subinterval of $(a,b)$ or $J \subseteq (a,b)$ is an open interval, then  
$Lu =z u$, $z\in\bbC$, is disconjugate on $J$ if and only if there exists a self-conjugate solution 
$U(z, \cdot \,)$ of $LU =z U$ such that $U(z,\cdot\,)$ is invertible on $J$. 
\end{theorem}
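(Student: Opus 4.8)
The plan is to reduce the statement to a real spectral parameter $z=\lambda\in\bbR$ and then to run, in both parts, the familiar two-step scheme: ``a self-conjugate invertible solution produces disconjugacy'' and ``disconjugacy produces a self-conjugate invertible solution''. The reduction is cheap: if $z\in\bbC\backslash\bbR$, disconjugacy of $Lu=zu$ on any subinterval is automatic — should a nontrivial $v$ vanish at $x_1<x_2$, one integration by parts (legitimate since $v(x_1)=v(x_2)=0$) using $Q=Q^*$ shows that $z\int_{x_1}^{x_2}(v(x),R(x)v(x))_{\bbC^m}\,dx$ is real, forcing $v\equiv 0$ — and an invertible solution obeying the self-conjugacy relation \eqref{3.22} (the natural off-real-axis substitute for \eqref{3.24}, cf.\ the discussion preceding Definition \ref{d3.3}) is produced by a suitable normalization of $\Theta(z,\cdot\,,x_0)+\Phi(z,\cdot\,,x_0)M$. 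So from here on I fix $\lambda\in\bbR$.

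For the implication ``existence of $U(\lambda,\cdot\,)$ $\Rightarrow$ disconjugacy on $J$'', I would start from a reduction-of-order representation: specializing \eqref{3.27}, \eqref{3.28} to $z=\ol z=\lambda$ with the given self-conjugate data $U(\lambda,\cdot\,)$ and using \eqref{3.25}, \eqref{3.29}, \eqref{3.31}, \eqref{3.32}, the $2m$ columns of $\big(U(\lambda,\cdot\,),\,U(\lambda,\cdot\,)K(\cdot\,)\big)$, with
\[
K(x)=\int_{x_*}^{x}dx'\,U(\lambda,x')^{-1}P(x')^{-1}\big[U(\lambda,x')^{-1}\big]^{*}\qquad(x_*\in\text{int}(J)\ \text{fixed}),
\]
form a basis of the $2m$-dimensional solution space, so that every vector solution $v$ of $Lu=\lambda u$ is of the form $v(x)=U(\lambda,x)[\alpha+K(x)\beta]$, $\alpha,\beta\in\bbC^{m}$, on the interior of $J$. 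If such a $v$ vanishes at two interior points $x_1<x_2$, invertibility of $U(\lambda,x_j)$ gives $\alpha+K(x_j)\beta=0$, so $\big(K(x_2)-K(x_1)\big)\beta=0$; but $K(x_2)-K(x_1)$ is strictly positive definite because $P^{-1}>0$ and $U(\lambda,\cdot\,)^{-1}$ is nonsingular on $[x_1,x_2]$, forcing $\beta=\alpha=0$ and $v\equiv 0$. The remaining case — one of the two zeros sitting at an endpoint of $J$ (the point $c$ in part~$(i)$, an endpoint of $[\alpha,\beta]$ in part~$(ii)$) — I would settle by the Sturmian separation theorem for self-conjugate (conjoined) solutions together with a limiting argument as one approaches that endpoint, as in \cite[Sect.\ 2.1--2.2]{Co71}, \cite[Theorem\ XI.10.2]{Ha82}.

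For the converse, ``disconjugacy on $J$ $\Rightarrow$ existence of $U(\lambda,\cdot\,)$'', the half-line case $J=[c,b)$ (part~$(i)$) is clean: I would take $U(\lambda,\cdot\,)=\Phi(\lambda,\cdot\,,c)$, which is self-conjugate by \eqref{3.14b} at $z=\ol z=\lambda$, and invertible on $(c,b)$ — a zero of $\det_{\bbC^m}\Phi(\lambda,\cdot\,,c)$ at some $x_2\in(c,b)$ would yield a nontrivial solution $\Phi(\lambda,\cdot\,,c)\eta$ (with $\eta\in\ker\Phi(\lambda,x_2,c)\backslash\{0\}$) vanishing at $c$ and $x_2$, contradicting disconjugacy on $[c,b)$; the case $J=(a,c]$ is symmetric. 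For $J=[\alpha,\beta]$ compact (part~$(ii)$) I would use that disconjugacy on a compact interval is an open condition — the first conjugate point of a point depends continuously on the interval — to pass to disconjugacy on $[\alpha-\e,\beta]$ for some $\e>0$ and then take $\Phi(\lambda,\cdot\,,\alpha-\e)$, which by the previous step is self-conjugate and invertible on all of $[\alpha,\beta]$. For $J$ open I would exhaust $J$ by compact subintervals $J_n\uparrow J$, obtain self-conjugate $U_n$ invertible on $J_n$, normalize them at a fixed $x_0\in J$, and use local a priori bounds on solutions of $Lu=\lambda u$ to extract a convergent subsequence; the limit is a nontrivial self-conjugate solution that is invertible throughout $J$, since a singularity at a point of $J$ would, via the Sturmian separation property, contradict disconjugacy there.

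I expect the main obstacle to be exactly this last point — upgrading ``invertible on the interior'' to ``invertible up to, or on all of, $J$''. It is where one must combine the openness of disconjugacy on compact intervals, a normal-families compactness argument in the open case, and — most delicately — the Sturmian separation theorem for conjoined bases to rule out degeneration of the limiting self-conjugate solution; none of this is visible in the scalar theory of Section~\ref{s2}, where positivity of a single scalar solution does all the work. A secondary point, requiring the matrix Wronskian identities \eqref{3.31}, \eqref{3.32} rather than a naive count, is verifying that the reduction-of-order pair $\big(U(\lambda,\cdot\,),U(\lambda,\cdot\,)K(\cdot\,)\big)$ really spans all vector solutions and not merely an $m$-dimensional subspace.
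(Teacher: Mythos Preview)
The paper does not give its own proof of this theorem; it is stated with citations to Coppel \cite[Sect.\ 2.1--2.2]{Co71} and Hartman \cite[Theorem~XI.10.2]{Ha82} and used as a black box. Your sketch follows essentially the classical route in those references: the reduction-of-order (Picone/Jacobi) argument via $v(x)=U(\lambda,x)[\alpha+K(x)\beta]$ and positive-definiteness of $K(x_2)-K(x_1)$ for the direction ``invertible self-conjugate solution $\Rightarrow$ disconjugacy'', and the choice $\Phi(\lambda,\cdot\,,c)$ together with openness of disconjugacy on compact intervals and a normal-families/compactness step for the converse. Your assessment of where the real work lies --- handling endpoint zeros and ruling out degeneration of the limiting self-conjugate solution in the open-interval case --- is accurate and matches what Coppel does.

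One small comment on your off-real-axis reduction: for $z\in\bbC\backslash\bbR$ the notion ``self-conjugate'' in the sense of Definition~\ref{d3.3} (which requires $\lambda\in\bbR$ and \eqref{3.24}) is not literally applicable, and the paper's statement of Theorem~\ref{t3.8} is slightly loose on this point. Your suggested fix --- replacing \eqref{3.24} by the off-axis analogue \eqref{3.22}/\eqref{3.23} --- is the correct reading, and indeed $\Theta(z,\cdot\,,x_0)$ already provides such a solution invertible everywhere (cf.\ \eqref{3.14c} and the argument around \eqref{3.43}), so no $M$-shift is needed.
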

%%%%%%%%%%%%

Next, we derive the analog of \eqref{2.44}--\eqref{2.47} in the present matrix-valued context.

Combining \eqref{3.13}--\eqref{3.14c} and \eqref{3.26}--\eqref{3.28} yields the analog of 
\eqref{2.44}, 
\begin{align}
\begin{split}
\Phi(z,x,x_0) = \Theta(z,x,x_0) \int_{x_0}^x dx' \, \Theta(z,x',x_0)^{-1} P(x')^{-1} 
\big[\Theta({\ol z},x',x_0)^{-1}\big]^*,&   \\
z \in \bbC\backslash\bbR, \; x, x_0 \in (a,b).     \lb{3.43}
\end{split} 
\end{align}
Moreover, assuming that $L$ is in the limit point case at $a$ (resp., $b$), it is known 
(cf., \cite{HS81}) that the analog of \eqref{2.42} also holds in the form, 
\begin{align}
\begin{split} 
& M_-(z,x_0) = - \lim_{x \downarrow a} \Phi(z,x,x_0)^{-1} \Theta(z,x,x_0), \quad 
z \in \bbC \backslash \bbR, \\
& \big(\text{resp.,} M_+(z,x_0) = - \lim_{x \uparrow b} \Phi(z,x,x_0)^{-1} \Theta(z,x,x_0), \,  
\quad z \in \bbC \backslash \bbR\big).    \lb{3.44}
\end{split} 
\end{align}

Hence, we obtain the following formulas for $M_{\pm}(\,\cdot\, , x_0)$, the analogs of \eqref{2.45} and 
\eqref{2.47}:

%%%%%%%%%%%%
\begin{theorem} \lb{t3.9}
Assume Hypothesis \ref{h3.1} and suppose that $L$ is in the limit point case at $a$ 
$($resp., $b$$)$. Then
\begin{align}
& M_-(z,x_0) = \lim_{x \downarrow a} \bigg[\int_x^{x_0} dx' \, 
\Theta(z,x',x_0)^{-1} P(x')^{-1} \big[\Theta({\ol z},x',x_0)^{-1}\big]^*\bigg]^{-1},  
\quad z \in \bbC \backslash \bbR,  \lb{3.45} \\
& \bigg(\text{resp., } \, M_+(z,x_0) = - \lim_{x \uparrow b} \bigg[\int_{x_0}^x dx' \, \Theta(z,x',x_0)^{-1} P(x')^{-1} 
\big[\Theta({\ol z},x',x_0)^{-1}\big]^*\bigg]^{-1},     \no \\
& \hspace*{9.5cm} z \in \bbC \backslash \bbR\bigg).   \lb{3.46}
\end{align}
If, in addition, $Lu = \lambda_0 u$ is disconjugate on $(a,x_0]$ $($resp., $[x_0,b)$$)$ for some 
$\lambda_a \in \bbR$ $($resp., $\lambda_b \in \bbR$$)$, then 
\begin{align}
& M_-(\lambda,x_0) = \bigg[\int_a^{x_0} dx' \, \Theta(\lambda,x',x_0)^{-1} P(x')^{-1} 
\big[\Theta(\lambda,x',x_0)^{-1}\big]^*\bigg]^{-1}, \quad \lambda < \lambda_a,     \lb{3.47} \\
& \bigg(\text{resp., } \,  M_+(\lambda,x_0) = - \bigg[\int_{x_0}^b dx' \, \Theta(\lambda,x',x_0)^{-1} P(x')^{-1} 
\big[\Theta(\lambda,x',x_0)^{-1}\big]^*\bigg]^{-1},    \no\\ 
& \hspace*{9.5cm} \lambda < \lambda_b\bigg),     \lb{3.48}
\end{align} 
and 
\begin{align} 
& \big(\xi, M_- (\lambda,x_0)^{-1} \eta\big)_{\bbC^m} = \int_a^{x_0} dx' \, 
\big(\xi, \Theta(\lambda,x',x_0)^{-1} P(x')^{-1} 
\big[\Theta(\lambda,x',x_0)^{-1}\big]^* \eta\big)_{\bbC^m},   \no \\ 
& \hspace*{8.2cm} \lambda < \lambda_a, \; \xi, \eta \in \bbC^m,    \lb{3.49} \\
& \bigg(\text{resp., } \, \big(\xi, M_+ (\lambda,x_0)^{-1} \eta\big)_{\bbC^m} = - \int_{x_0}^b dx' \, 
\big(\xi, \Theta(\lambda,x',x_0)^{-1} P(x')^{-1}    \no \\ 
& \hspace*{3.5cm} \times \big[\Theta(\lambda,x',x_0)^{-1}\big]^* \eta\big)_{\bbC^m},   \quad 
\lambda < \lambda_b, \; \xi, \eta \in \bbC^m\bigg),    \lb{3.50}
\end{align}  
exists as a Lebesgue integral. 
\end{theorem}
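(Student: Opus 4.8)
The plan is to derive \eqref{3.45}--\eqref{3.46} by inserting the Riccati-type factorization \eqref{3.43} into the limiting expressions \eqref{3.44}, and then to upgrade to \eqref{3.47}--\eqref{3.50} using disconjugacy together with a monotonicity argument. It suffices to treat the endpoint $b$; the endpoint $a$ follows by the symmetric substitution of $-\lim_{x \downarrow a}\int_x^{x_0} dx'\,\cdots$ for $\lim_{x \uparrow b}\int_{x_0}^x dx'\,\cdots$. First, from \eqref{3.43} with $x$ and $x_0$ beyond the last zero of $\det_{\bbC^m}(\Theta(z,\cdot\,,x_0))$ (recall $\Theta(z,x,x_0)$ is invertible for $z\in\bbC\backslash\bbR$ because $\Theta(\ol z,\cdot\,,x_0)^*$ would otherwise produce a nontrivial $L^2$ solution contradicting the l.p.c., or directly from \eqref{3.14d}--\eqref{3.14g}), one has
\begin{equation}
\Phi(z,x,x_0)^{-1}\Theta(z,x,x_0) = \bigg[\int_{x_0}^x dx'\,\Theta(z,x',x_0)^{-1}P(x')^{-1}\big[\Theta(\ol z,x',x_0)^{-1}\big]^*\bigg]^{-1}. \no
\end{equation}
Taking $x \uparrow b$ and invoking \eqref{3.44} yields \eqref{3.46} at once; since the integrand's matrix $\int_{x_0}^x dx'\,\Theta(z,x',x_0)^{-1}P(x')^{-1}[\Theta(\ol z,x',x_0)^{-1}]^*$ is invertible for every $x$ close to $b$ (it equals $\Theta(z,x,x_0)^{-1}\Phi(z,x,x_0)$, a product of invertibles once $x$ is beyond the last zero of $\det_{\bbC^m}(\Phi(z,\cdot\,,x_0))$, which by the l.p.c. argument is finite), the limit on the right is well defined.

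For \eqref{3.48}, assume $Lu=\lambda_b u$ is disconjugate on $[x_0,b)$. By Theorem \ref{t3.8}\,$(i)$ there exists a self-conjugate solution of $LU=\lambda_b U$ invertible on $(x_0,b)$; a standard perturbation/continuity argument in the spectral parameter (or Theorem \ref{t2.5}-type reasoning in the matrix setting, using that disconjugacy is an open condition) produces $\lambda_b$ as in Theorem \ref{t3.10}\,/\,Remark \ref{r3.3} so that $\Theta(\lambda,\cdot\,,x_0)$ itself is invertible on $[x_0,b)$ for all $\lambda<\lambda_b$. For such real $\lambda$ the matrix
\begin{equation}
A(x) := \int_{x_0}^x dx'\,\Theta(\lambda,x',x_0)^{-1}P(x')^{-1}\big[\Theta(\lambda,x',x_0)^{-1}\big]^* \no
\end{equation}
is \emph{positive definite and strictly increasing} in $x$ (its $x$-derivative is $\Theta(\lambda,x,x_0)^{-1}P(x)^{-1}[\Theta(\lambda,x,x_0)^{-1}]^*>0$), hence $A(x)^{-1}$ is strictly decreasing and positive, so $\lim_{x\uparrow b}A(x)^{-1}$ exists. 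The real case of \eqref{3.44} (legitimate by Remark \ref{r3.3}, extending $M_+$ analytically to $\lambda<\lambda_b$) then gives $M_+(\lambda,x_0) = -\lim_{x\uparrow b}A(x)^{-1}$, which is \eqref{3.48}. Testing this scalarly against $\xi,\eta\in\bbC^m$ and using monotone convergence (the sesquilinear integrand $(\xi,\Theta(\lambda,x',x_0)^{-1}P(x')^{-1}[\Theta(\lambda,x',x_0)^{-1}]^*\xi)_{\bbC^m}\ge 0$ is monotone in the upper limit) shows the integral in \eqref{3.50} converges as a Lebesgue integral and equals $-(\xi,M_+(\lambda,x_0)^{-1}\eta)_{\bbC^m}$ after polarization.

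The main obstacle is the passage from $z\in\bbC\backslash\bbR$ to real $\lambda<\lambda_b$: one must justify that $\Theta(\lambda,\cdot\,,x_0)$ is genuinely invertible on all of $[x_0,b)$ (not merely beyond some last zero) and that the analytic continuation of $M_+(\cdot\,,x_0)$ through $\bbC\backslash[\lambda_b,\infty)$ still satisfies the limiting formula \eqref{3.44}. This is where disconjugacy of $Lu=\lambda_b u$ on $[x_0,b)$ is essential: it forces disconjugacy of $Lu=\lambda u$ for all $\lambda\le\lambda_b$ on $[x_0,b)$ (by a matrix Sturm comparison, e.g. \cite[Ch.\ 2]{Co71} or \cite[Sect.\ XI.10]{Ha82}), hence $\det_{\bbC^m}(\Theta(\lambda,x,x_0))$ has no zero in $(x_0,b)$ since $\Theta(\lambda,x_0,x_0)=I_m$ already uses up the one permitted zero-count; and it provides, via Theorem \ref{t3.8}\,$(i)$, the self-conjugate invertible solution whose comparison with $\Theta$ controls the limit. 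Once invertibility is secured, the monotonicity of $A(x)$ does all the remaining work, and the proof is essentially the matrix transcription of the argument surrounding \eqref{2.43}--\eqref{2.47}.
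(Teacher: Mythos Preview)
Your derivation of \eqref{3.46} from \eqref{3.43} and \eqref{3.44} matches the paper exactly, and your monotonicity argument correctly shows that $\lim_{x\uparrow b}A(x)^{-1}$ \emph{exists} for real $\lambda<\lambda_b$. The gap is in the identification step: you claim ``the real case of \eqref{3.44} (legitimate by Remark \ref{r3.3})'' and later ``once invertibility is secured, the monotonicity of $A(x)$ does all the remaining work,'' but neither of these actually shows that the limit you produced equals $-M_+(\lambda,x_0)$. Remark \ref{r3.3} gives analyticity of $M_+(\,\cdot\,,x_0)$ on $\bbC\backslash[\lambda_b,\infty)$; it says nothing about whether the \emph{approximants} $M_{+,x}(z,x_0):=-\Phi(z,x,x_0)^{-1}\Theta(z,x,x_0)$ continue to converge to $M_+$ when $z$ is pushed onto the real segment $(-\infty,\lambda_b)$. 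Monotonicity of $A(x)$ is a statement at a single fixed real $\lambda$ and carries no information linking that limit to the analytic object $M_+(\lambda,x_0)$ defined by continuation from $\bbC_+$.

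The paper closes this gap with a genuinely different mechanism: it shows (via \eqref{3.56}--\eqref{3.57}) that each $M_{+,x}(\,\cdot\,,x_0)$ is itself a matrix Nevanlinna--Herglotz function whose representing measure $\Omega_x$ is supported in $[\lambda_b,\infty)$ (this is where disconjugacy enters), hence $\{M_{+,x}(\,\cdot\,,x_0)\}_{x\in(x_0,b)}$ is uniformly bounded on compact subsets of $\bbC\backslash[\lambda_b,\infty)$. Vitali's theorem then upgrades the pointwise convergence in \eqref{3.44} on $\bbC\backslash\bbR$ to locally uniform convergence on all of $\bbC\backslash[\lambda_b,\infty)$, yielding \eqref{3.60} and hence \eqref{3.48}. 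Your proposal omits this normal-families/Vitali step entirely; without it the two halves of your argument (existence of a limit, and analyticity of $M_+$) never meet.

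A secondary issue: your justification that $\Theta(\lambda,\,\cdot\,,x_0)$ is invertible on $(x_0,b)$ is incorrect as stated. Disconjugacy says every nontrivial \emph{vector} solution vanishes at most once on $[x_0,b)$; since $\Theta(\lambda,x_0,x_0)=I_m$ is \emph{not} a zero, the initial condition does not ``use up'' any zero-count. The argument you have in mind works for $\Phi$ (because $\Phi(\lambda,x_0,x_0)=0$ does supply a zero for every column), not for $\Theta$. The paper does not spell this out either, but it also does not rely on the erroneous counting; in any case this point is subordinate to the main Vitali gap above.
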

%%%%%%%%%%%%
\begin{proof}
It suffices to focus on the endpoint $b$. Combining \eqref{3.43} and \eqref{3.44} (employing that 
$L$ is l.p.c. at $b$) yields relation \eqref{3.46}. 

For the remainder of this proof we thus assume that $Lu = \lambda_b u$ is disconjugate on 
$[x_0,b)$ for some $\lambda_b \in \bbR$ (in addition to $L$ being l.p.c. at $b$). Then,  
\begin{equation} 
\int_{x_0}^x dx' \, \Theta(\lambda,x',x_0)^{-1} P(x')^{-1} 
\big[\Theta(\lambda,x',x_0)^{-1}\big]^*, \quad \lambda < \lambda_b,     \lb{3.51}
\end{equation}
is strictly monotone increasing with respect to $x > x_0$. Recalling the well-known fact 
(cf.\ \cite[Lemma\ 2.1]{AN76}), 
\begin{align}
\begin{split}
& \text{If $0 \leq C_1 \leq C_2 \leq \cdots \leq C_\infty$, with
$C_n, C_\infty \in \cB_{\infty}(\cH)$, $n\in\bbN$,}      \lb{3.52} \\
& \quad \text{then $\lim_{n\to\infty} \|C_n - C\|_{\cB(\cH)} = 0$ for
some $C\in\cB_{\infty}(\cH)$,}
\end{split}
\end{align}
one infers convergence of the $m \times m$ matrix $- \int_{x_0}^x dx' \, \cdots$ to 
$- \int_{x_0}^b dx' \, \cdots$ on the right-hand-side in \eqref{3.48} as $x \uparrow b$. In addition, 
the monotone convergence theorem implies the existence of 
\begin{equation}
\int_{x_0}^b dx' \, 
\big(\xi, \Theta(\lambda,x',x_0)^{-1} P(x')^{-1} 
\big[\Theta(\lambda,x',x_0)^{-1}\big]^* \xi\big)_{\bbC^m},  \quad  
 \lambda < \lambda_b, \; \xi \in \bbC^m,    \lb{3.53} 
\end{equation} 
as a Lebesgue integral. The general case depicted in \eqref{3.50} for $\xi, \eta \in \bbC^m$ 
then follows by polarization. 

It remains to prove equality of $M_+(\lambda,x_0)$ with the right-hand side of \eqref{3.48} for 
$\lambda < \lambda_b$. We start by noting that disconjugacy of $Lu = \lambda_b u$ implies analyticity of 
$M_+(\, \cdot \, ,x_0)$ on $\bbC \backslash [\lambda_b, \infty)$ and hence the fact 
that the $m \times m$ matrix-valued measure $\Omega(\, \cdot \, , x_0)$ in the Nevanlinna--Herglotz representation for $M_+(\, \cdot \, ,x_0)$ is supported on $[\lambda_b, \infty)$, that is, one infers 
the representation,
\begin{align}
& M_+(z,x_0) = A + \int_{[\lambda_b, \infty)} d \Omega(\lambda, x_0)  
\big[(\lambda - z)^{-1} - \lambda (1 + \lambda^2)^{-1}\big], \quad z \in \bbC \backslash [\lambda_b, \infty),
\no \\
& A = A^* \in \bbC^{m \times m}, \quad 
\int_{[\lambda_b, \infty)} d (\xi, \Omega(\lambda, x_0) \xi)_{\bbC^m} (1 + \lambda^2)^{-1} < \infty, \; 
\xi \in \bbC^m.   \lb{3.54} 
\end{align}
Similarly, one infers that for each $x \in [x_0,b)$ the $m \times m$ matrix-valued function, 
$M_{+,x}(\, \cdot \, ,x_0)$, defined by 
\begin{align} 
& M_{+,x}(z,x_0) := - \Phi(z,x,x_0)^{-1} \Theta(z,x,x_0)     \no \\
& \quad = - \bigg[\int_{x_0}^x dx' \, \Theta(z,x',x_0)^{-1} P(x')^{-1}   
\big[\Theta({\ol z},x',x_0)^{-1}\big]^*\bigg]^{-1},    \lb{3.55} \\
& \hspace*{4.9cm} z \in \bbC \backslash [\lambda_b, \infty), \; x \in (x_0,b),    \no 
\end{align} 
is meromorphic on $\bbC$ and also analytic on $\bbC \backslash [\lambda_b, \infty)$. 
General Weyl--Titchmarsh theory in connection with the interval $[x_0,x]$, $x \in (x_0,b)$, 
where $x_0, x$ are regular endpoints for $L$, yields that for fixed $x_0, y \in (a,b)$, 
$M_{+,y}(\, \cdot \, ,x_0)$, and hence the $m \times m$ matrix-valued 
integral in \eqref{3.55}, represents a matrix-valued meromorphic Herglotz--Nevanlinna  
function (cf.\ \cite{HS81}). Indeed, employing \eqref{3.14d} yields
\begin{equation}
M_{+,y} (z,x_0)^* = M_{+,y}({\ol z}, x_0), \quad z \in \bbC \backslash \bbR, 
\end{equation}
and introducing 
\begin{align}
\begin{split} 
U_y(z,x,x_0) = \Theta(z,x,x_0) + \Phi(z,x,x_0) [- \Phi(z,y,x_0)^{-1} \Theta(z,y,x_0)],& \\ 
z\in \bbC \backslash \bbR, \; x \in [x_0, y], y \in (x_0,b),&      \lb{3.56} 
\end{split} 
\end{align}
a combination of \eqref{3.11} (for $z = {\ol z_1} = z_2$), $U_y(z,y,x_0) = 0$, \eqref{3.14}--\eqref{3.14c} 
imply the identity
\begin{align}
\begin{split} 
& \Im(M_{+,y}(z,x_0)) = \Im \big(- \Phi(z,y,x_0)^{-1} \Theta(z,y,x_0)\big)   \\
& \quad = \Im(z) \int_{x_0}^y dx' \, U_y(z,x',x_0)^* R(x') U_y(z,x',x_0),   
\quad z\in \bbC \backslash \bbR, \; y \in (x_0,b).     \lb{3.57} 
\end{split} 
\end{align}
Again, disconjugacy of $Lu = \lambda_b u$ implies that the $m \times m$ matrix-valued measure 
$\Omega_x$ associated with $M_{+,x}(\, \cdot \, ,x_0)$ in  \eqref{3.55}, is again supported on 
$[\lambda_b, \infty)$, that is, for each $x \in (x_0,b)$, 
\begin{align}  
& M_{+,x}(z,x_0) = - \Phi(z,x,x_0)^{-1} \Theta(z,x,x_0)   \no \\ 
& \hspace*{1.8cm}= - \bigg[\int_{x_0}^x dx' \, \Theta(z,x',x_0)^{-1} P(x')^{-1} 
\big[\Theta({\ol z},x',x_0)^{-1}\big]^*\bigg]^{-1}     \no \\
& \quad = A_x + \int_{[\lambda_b, \infty)} d \Omega_x(\lambda, x_0)  
\big[(\lambda - z)^{-1} - \lambda (1 + \lambda^2)^{-1}\big], \quad z \in \bbC \backslash [\lambda_b, \infty),  
\lb{3.58} \\
& A_x = A_x^* \in \bbC^{m \times m}, \quad 
\int_{[\lambda_b, \infty)} d (\xi, \Omega_x (\lambda, x_0) \xi)_{\bbC^m} (1 + \lambda^2)^{-1} 
< \infty, \; \xi \in \bbC^m.     \no  
\end{align} 
In accordance with the limiting relation \eqref{3.44}, the finite measures 
$d \Omega_x(\lambda,x_0) (1 + \lambda^2)^{-1}$ 
converge to $d \Omega (\lambda,x_0)(1 + \lambda^2)^{-1}$ as $x \uparrow b$ in the 
weak-${}^*$ sense (cf. also \cite{KR01}), that is,  
\begin{equation} 
\lim_{x\uparrow 0} \int_{[\lambda_b, \infty)} d \Omega_x(\lambda, x_0)  
\, (1 + \lambda^2)^{-1} f(\lambda) = \int_{[\lambda_b, \infty)} 
d \Omega(\lambda, x_0) (1 + \lambda^2)^{-1} f(\lambda)     \lb{3.59} 
\end{equation} 
for all $f \in C(\bbR) \cap L^{\infty}(\bbR; d\lambda)$. 
The Nevanlinna--Herglotz representation \eqref{3.58} for $M_{+,x}(\, \cdot \,,x_0)$ demonstrates 
that for any compact $K \subset \bbC\backslash [\lambda_b,\infty)$, there exists a constant 
$C(K)>0$ such that $\|M_{+,x}(z,x_0)\|_{\cB(\bbC^m)} \leq C(K)$ uniformly with respect to 
$z \in K$ and $x \in (x_0,b)$. An application of Vitali's Theorem (see, e.g., 
\cite[Sect.\ 7.3]{Re98}) then proves that the convergence in \eqref{3.44} extends to 
\begin{equation}
M_+(z,x_0) = - \lim_{x \uparrow b} \Phi(z,x,x_0)^{-1} \Theta(z,x,x_0), \quad 
z \in \bbC \backslash [\lambda_b,\infty),    \lb{3.60}
\end{equation}
in particular, it applies to $z < \lambda_b$ and hence yields \eqref{3.48}.
\end{proof}
%%%%%%%%%%%%

We are not aware of any source containing formulas of the type \eqref{3.45}--\eqref{3.50}.
Naturally, these formulas extend to the more general self-adjoint boundary conditions at the regular 
endpoint $x_0 \in (a,b)$ discussed in detail in \cite{HS81} (cf.\ also \cite{CG01}, \cite{CG02}) in 
the matrix-valued context, extending the scalar case described in \eqref{2.48}--\eqref{2.50}. We 
omit further details at this point. 

The main result of this section then reads as follows.

%%%%%%%%%%%%
\begin{theorem} \lb{t3.10}
Assume Hypothesis \ref{h3.1}. \\ 
$(i)$ Suppose that for some $\lambda_a \in \bbR$, 
$(u,[T_{min} - \lambda_a I] u)_{L^2((a,b); R dx; \bbC^m)}\geq 0$ for all $u \in \dom(T_{min})$ with 
$u=0$ in a neigborhood of $b$. In addition, assume that $L$ is in the limit point case at $a$. 
Then for all $\lambda < \lambda_a$, the Weyl--Titchmarsh solution $\Psi_-(\lambda,\cdot \,)$ is 
also a principal solution of $L U = \lambda U$ at $a$, that is, for $x, x_0$ to the left of the first 
zero of $\det_{\bbC^m}(\Psi_- (\lambda, \cdot \,)), \det_{\bbC^m}(U_a(\lambda,\cdot \,))$ 
$($if any\,$)$,  
\begin{equation} 
\Psi_-(\lambda,x) \Psi_-(\lambda,x_0)^{-1} = U_a(\lambda,x) U_a(\lambda,x_0)^{-1}.    
\end{equation}   
$(ii)$ Suppose that for some $\lambda_b \in \bbR$, 
$(u,[T_{min} - \lambda_b I] u)_{L^2((a,b); R dx; \bbC^m)}\geq 0$ for all $u \in \dom(T_{min})$ with 
$u=0$ in a neigborhood of $a$. In addition, assume that $L$ is in the limit point case at $b$. 
Then for all $\lambda < \lambda_b$, the Weyl--Titchmarsh solution $\Psi_+(\lambda,\cdot \,)$ is 
also a principal solution of $L U = \lambda U$ at $b$, that is, for $x, x_0$ to the right of the last 
zero of $\det_{\bbC^m}(\Psi_+ (\lambda, \cdot \,)), \det_{\bbC^m}(U_b(\lambda,\cdot \,))$ 
$($if any\,$)$,  
\begin{equation} 
\Psi_+(\lambda,x) \Psi_+(\lambda,x_0)^{-1} = U_b(\lambda,x) U_b(\lambda,x_0)^{-1}.    
\end{equation} 
\end{theorem}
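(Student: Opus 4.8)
It suffices to prove part $(ii)$; part $(i)$ follows by the analogous argument at the endpoint $a$. So assume $(u,[T_{min} - \lambda_b I] u)_{L^2((a,b); R dx; \bbC^m)}\geq 0$ for all $u \in \dom(T_{min})$ vanishing near $a$, and that $L$ is in the limit point case at $b$. The first step is to turn the form hypothesis into a disconjugacy statement near $b$: by the matrix-valued analogue of Theorem~\ref{t2.8}, carried out through a Jacobi/Riccati factorization (cf.\ \cite[Ch.\ 2]{Co71}, \cite[Sect.\ XI.10]{Ha82}), there is $c \in (a,b)$ such that $Lu = \lambda u$ is disconjugate on $[c,b)$ for every $\lambda \le \lambda_b$. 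Fix $\lambda < \lambda_b$ and a reference point $x_0 \in [c,b)$ lying to the right of the (finitely many) zeros of $\det_{\bbC^m}(\Psi_+(\lambda,\cdot\,))$ and $\det_{\bbC^m}(U_b(\lambda,\cdot\,))$. By \eqref{3.14c} the solution $\Theta(\lambda,\cdot\,,x_0)$ is self-conjugate, and it is invertible on $[x_0,b)$ (this is implicit in Theorem~\ref{t3.9}, whose additional disconjugacy hypothesis is now satisfied at $\lambda_b$).

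I would then assemble the Weyl--Titchmarsh identities at $\lambda$. Theorem~\ref{t3.9}, formula \eqref{3.48}, gives $M_+(\lambda,x_0) = - J_\infty^{-1}$ with
\[
J_\infty := \int_{x_0}^b dx' \, \Theta(\lambda,x',x_0)^{-1} P(x')^{-1} \big[\Theta(\lambda,x',x_0)^{-1}\big]^*
\]
a finite, positive definite $m \times m$ matrix; by the extension of $\Psi_+$ described in Remark~\ref{r3.3}, the normalized form of \eqref{3.16}, namely $\Psi_+(\lambda,x)\Psi_+(\lambda,x_0)^{-1} = \Theta(\lambda,x,x_0) + \Phi(\lambda,x,x_0) M_+(\lambda,x_0)$, continues to hold for $\lambda < \lambda_b$; and \eqref{3.43} holds directly at real $\lambda < \lambda_b$ on $[x_0,b)$ by the construction used in its derivation. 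Setting $K(x) := \int_x^b dx' \, \Theta(\lambda,x',x_0)^{-1} P(x')^{-1} [\Theta(\lambda,x',x_0)^{-1}]^*$, so that $K(x_0) = J_\infty$ and $\int_{x_0}^x dx' \, \Theta(\lambda,x',x_0)^{-1} P(x')^{-1} [\Theta(\lambda,x',x_0)^{-1}]^* = J_\infty - K(x)$, substitution of \eqref{3.43} and $M_+(\lambda,x_0) = -J_\infty^{-1}$ into the normalized \eqref{3.16} collapses to
\[
\Psi_+(\lambda,x)\Psi_+(\lambda,x_0)^{-1} = \Theta(\lambda,x,x_0)\, K(x)\, J_\infty^{-1}, \quad x \in [x_0,b).
\]

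It remains to recognize $U_b^{\circ}(\lambda,x) := \Theta(\lambda,x,x_0) K(x)$ as a principal solution of $LU = \lambda U$ at $b$. It solves $LU = \lambda U$ (it has the form \eqref{3.27} with $V_+ = \Theta(\lambda,\cdot\,,x_0)$), it is invertible on $[x_0,b)$, and it is self-conjugate, being $\Psi_+(\lambda,\cdot\,)$ times the constant matrix $\Psi_+(\lambda,x_0)^{-1}J_\infty$. Since $K(\cdot\,) = K(\cdot\,)^*$ and $K'(x) = - \Theta(\lambda,x,x_0)^{-1} P(x)^{-1} [\Theta(\lambda,x,x_0)^{-1}]^*$, one computes
\[
U_b^{\circ}(\lambda,x)^{-1} P(x)^{-1} \big[U_b^{\circ}(\lambda,x)^{-1}\big]^* = K(x)^{-1}\big(-K'(x)\big)K(x)^{-1} = \frac{d}{dx}\big(K(x)^{-1}\big),
\]
so that $\int_{x_0}^x dx' \, U_b^{\circ}(\lambda,x')^{-1} P(x')^{-1} [U_b^{\circ}(\lambda,x')^{-1}]^* = K(x)^{-1} - K(x_0)^{-1}$. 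Since $K(\cdot\,)$ is monotone decreasing with $\|K(x)\| \to 0$ as $x \uparrow b$ (a tail of a convergent integral with positive semidefinite integrand), the least eigenvalue of $K(x)^{-1}$, hence of $K(x)^{-1} - K(x_0)^{-1}$, tends to $+\infty$, so the inverse of the above integral tends to $0$; by Definition~\ref{d3.4}\,$(ii)$, $U_b^{\circ}(\lambda,\cdot\,)$ is a principal solution at $b$. By Lemma~\ref{l3.5} it agrees with $U_b(\lambda,\cdot\,)$ up to a right invertible constant factor, and since $U_b^{\circ}(\lambda,x_0) = J_\infty$ the collapsed identity yields $\Psi_+(\lambda,x)\Psi_+(\lambda,x_0)^{-1} = U_b^{\circ}(\lambda,x) U_b^{\circ}(\lambda,x_0)^{-1} = U_b(\lambda,x)U_b(\lambda,x_0)^{-1}$ for all $x$ beyond the last zeros, showing also that $\Psi_+(\lambda,\cdot\,)$ is itself a principal solution at $b$.

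I expect the main obstacle to lie in the first step: the matrix analogue of Theorem~\ref{t2.8}, passing from the quadratic-form hypothesis to disconjugacy on a half-line, where the Jacobi factorization must be handled noncommutatively, together with the attendant bookkeeping of where $\Theta(\lambda,\cdot\,,x_0)$ is invertible and hence of the exact range of validity of \eqref{3.43} and \eqref{3.48}; once disconjugacy near $b$ is available the remaining computations are routine. (One could instead attempt to mimic the short contradiction proof of Theorem~\ref{t2.13}, using the minimality property in Lemma~\ref{l3.6} and limit-point uniqueness; but that route additionally requires verifying that the constant Wronskian $W(U_b(\lambda,\cdot\,)^*,\Psi_+(\lambda,\cdot\,))$ is invertible, which is not automatic in the matrix case.)
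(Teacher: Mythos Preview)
Your proof is correct and takes a genuinely different route from the paper's. Both arguments share the opening move---invoking \cite[Theorem~XI.10.3]{Ha82} to pass from the quadratic-form bound to disconjugacy of $Lu=\lambda u$ on half-lines $[c,b)$---but then diverge. The paper proceeds by approximation: it cites the fact from \cite[pp.~44--45]{Co71} that the normalized principal solution $U_b(\lambda,\cdot\,,x_0)$ is the limit as $y\uparrow b$ of the solutions $U_y$ determined by $U_y(\lambda,x_0,x_0)=I_m$, $U_y(\lambda,y,x_0)=0$; it then writes $U_y=\Theta+\Phi M_{+,y}$ and uses the convergence $M_{+,y}\to M_+$ (established in the proof of Theorem~\ref{t3.9}) to identify the same limit with $\Psi_+(\lambda,\cdot\,)\Psi_+(\lambda,x_0)^{-1}$. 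Your argument instead builds the principal solution explicitly as $U_b^{\circ}=\Theta K$---the matrix analogue of the classical formula \eqref{2.16}---and verifies principality directly via the clean identity $\int_{x_0}^x (U_b^{\circ})^{-1}P^{-1}[(U_b^{\circ})^{-1}]^{*}=K(x)^{-1}-K(x_0)^{-1}$, bypassing the Coppel approximation lemma entirely. Your route is more self-contained and exposes precisely why the Weyl--Titchmarsh solution is principal (the tail $K(x)\to 0$ forces the defining integral to blow up); the paper's route is shorter if one is willing to quote Coppel's limit construction, and it makes the link to the finite-interval Weyl disks $M_{+,y}$ more transparent. Your closing remark about why the scalar contradiction proof of Theorem~\ref{t2.13} does not transfer---the Wronskian $W(U_b^*,\Psi_+)$ need not be invertible---is also on point and explains why both the paper and you resort to a constructive identification rather than a minimality argument.
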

%%%%%%%%%%%%
\begin{proof}
It suffices to consider item $(ii)$. By \cite[Theorem\ XI.10.3 ]{Ha82}, the assumption on 
$T_{min} - \lambda_b I$ implies that for all $\lambda < \lambda_b$ and all $c \in (a,b)$, 
$Lu =\lambda u$ is disconjugate on $[c,b)$. By \cite[Theorem 2.3]{Co71} or 
\cite[Theorem\ XI.10.5]{Ha82}, $LU = \lambda U$ has a principal solution $U_b(\lambda, \cdot\,)$ 
for all $\lambda < \lambda_b$. Without loss of generality we may uniquely determine 
$U_b(\lambda, \cdot \, ,x_0)$ by demanding the normalization $U_b(\lambda, x_0; x_0) = I_m$.  
As proved in \cite[p.\ 44--45]{Co71}, it is possible to approximate $U_b(\lambda, \cdot \, ,x_0)$ as 
follows: For $y \in (x_0,b)$, consider the unique solution $U_y(\lambda,\cdot;x_0)$, of 
$LU = \lambda U$, $\lambda < \lambda_b$, satisfying
\begin{equation}
U_y (\lambda,x_0,x_0) = I_m, \quad U_y(\lambda,y,x_0) = 0.   \lb{3.61} 
\end{equation}
Then
\begin{equation}
U_b(\lambda, \cdot \, ,x_0) = \lim_{y \uparrow b} U_y(\lambda,\cdot,x_0), \quad \lambda < \lambda_b. 
\lb{3.62} 
\end{equation}
In addition, one obtains that
\begin{equation}
U_y (\lambda, \cdot \, , x_0) = \Theta (\lambda, \cdot \, ,x_0) + \Phi (\lambda, \cdot \, ,x_0) 
M_{+,y} (\lambda; x_0),     \lb{3.63}
\end{equation}
with $M_{+,y} (\lambda; x_0)$ introduced in \eqref{3.55}. Employing the convergence result 
\eqref{3.47}, that is, $\lim_{y \uparrow b} M_{+,y} (\lambda,x_0) = M_+(\lambda,x_0)$, 
$\lambda < \lambda_b$, in \eqref{3.63} thus also yields 
\begin{equation}
\lim_{y \uparrow b} U_y (\lambda, \cdot \, , x_0) = \Psi_+(\lambda,x) \Psi_+(\lambda,x_0)^{-1}, 
\quad \lambda < \lambda_b.     \lb{3.64}
\end{equation}
A comparison of \eqref{3.62} and \eqref{3.64} then proves
\begin{equation}
U_b(\lambda, \cdot \, ,x_0) = \Psi_+(\lambda,x) \Psi_+(\lambda,x_0)^{-1}, 
\quad \lambda < \lambda_b,     \lb{3.65} 
\end{equation} 
completing the proof. 
\end{proof}
%%%%%%%%%%%%

We emphasize that the continuity assumptions on the coefficients in $L$ made in the context 
of oscillaton theory in \cite[Sect.\ 2.1]{Co71}, \cite[Sect.\ XI.10]{Ha82} are not necessary and the 
quoted results in this section all extend to our current Hypothesis \ref{h3.1}.

We also note that while we focused on Sturm--Liouville operators with matrix-valued coefficients, 
a treatment of more general singular Hamiltonian systems (along the lines of \cite{CG02}, 
\cite[Ch.\ 2]{Co71}, \cite{HS93}--\cite{HS86}, \cite{Re58}, \cite{Re64}, \cite[Ch.\ VII]{Re71}, 
\cite[Chs.\ V, VI]{Re80}, is clearly possible.

Emboldened by the results in Theorem \ref{t3.10} in the matrix context, one might guess that 
if $T_{min} \geq \lambda_0 I$ for some $\lambda_0 \in \bbR$, positivity of the solution 
$u(\lambda_0, \cdot \,)$ of $\ell u=\lambda_0 u$, or alternatively, $u\neq 0$ in
Theorem \ref{t2.8} could be translated to the matrix-valued case in a multitude of different ways. 
Let $U(\lambda_0, \cdot \,) \in \bbC^{m\times m}$ denote a matrix-valued solution of 
$L U=\lambda_0 U$, then here is a possible list of ``positivity results'' 
one could imagine in the matrix context from the outset: 

\medskip 

\noindent $(I)$ \hspace{2.6mm} $U\in\bbC^{m\times m}$ is invertible. \\[1mm] 
\noindent  $(II)$ \hspace*{.6mm} $U\in\bbC^{m\times m}$ is positive definite. \\[1mm]
\noindent  $(III)$  $U\in\bbC^{m\times m}$ is positivity preserving. \\[1mm]
\noindent $(IV)$  $U\in\bbC^{m\times m}$ is positivity improving.  

\medskip 

For completenes we briefly recall the notions of positivity preserving (resp., improving) 
matrices:   

%%%%%%%%%%%%%% 
\begin{definition} \lb{d3.11} 
Let $A=\big(A_{j,k}\big)_{1\leq j,k\leq m}\in\bbR^{m\times m}$ for some
$m\in\bbN$. \\
$(i)$ $A$ is called {\it positivity preserving} if $A_{j,k}\geq 0$ for all
$1\leq j,k\leq m$.  \\[1mm]  
$(ii)$ $A$ is called {\it positivity improving} if $A_{j,k}> 0$ for all $1\leq j,k\leq m$.
\end{definition}
%%%%%%%%%%%%%%

However, item $(II)$ implies self-adjointness of 
$U(\lambda_0, \cdot \,)$ and hence upon invoking the equation adjoint to 
$L U=\lambda_0 U$, commutativity of $U(\lambda_0, \cdot \,)$ and 
$Q(\cdot)$.  Our next example, a matrix-valued Schr\"odinger 
operator (i.e., $P(\cdot) = R(\cdot) = I_m$ in $L$), provides a simple counter-example to positive definiteness.

%%%%%%%%%%%%%%
\begin{example}\lb{e3.13}
Let $m=2$, $(a,b)=\bbR$, $P(\cdot) = R(\cdot) = I_2$ a.e. on $\bbR$, and 
\begin{equation}
Q(x) = \begin{pmatrix} 0 & 1 \\ 1 &  2\end{pmatrix}
\end{equation}
in $L$.  One verifies that
\begin{equation}
T_{min}\geq -2.
\end{equation}
Taking $E=-2$, the general solution to $LU=-2U$ has the form
\begin{equation}\lb{3.77}
U(x)= \begin{pmatrix} U_{1,1}(x) & U_{1,2}(x) \\
U_{2,1}(x) & U_{2,2}(x)
\end{pmatrix},\quad x\in \bbR,
\end{equation}
where 
\begin{align}
U_{1,1}(x) &= c_1e^{\sqrt{2}x} + c_2e^{-\sqrt{2}x},\lb{3.78}\\
U_{2,1}(x) &= \widetilde{c}_1 e^{2x} + \widetilde{c}_2e^{-2x} - (c_1/2)e^{\sqrt{2}x} - (c_2/2)e^{-\sqrt{2}x},\\
U_{2,2}(x) & = d_1\cos(x)+d_2\sin(x)+d_3e^{\sqrt{7}x}+d_4e^{-\sqrt{7}x},\\
U_{1,2}(x) &= \widetilde{d_1}e^{\sqrt{2}x} + \widetilde{d}_2e^{-\sqrt{2}x} - (d_1/3)\cos(x) - (d_2/3)\sin(x)\\
&\quad + (d_3/5)e^{\sqrt{7}x} + (d_4/5)e^{-\sqrt{7}x},\lb{3.82}
\end{align}
and $c_j,\widetilde{c}_j, \widetilde{d}_j$, $j\in\{1,2\}$, and $d_k$, $k\in \{1,2,3,4\}$ are arbitrary parameters.  No solution of the form \eqref{3.77}--\eqref{3.82} is positive definite for all $x\in \bbR$.  If such a solution were positive definite, it would commute with $Q$, so it suffices to show that solutions that commute with $Q$ are not positive definite.  By writing out $UQ=QU$, and equating corresponding matrix entries, one infers that $U$ commutes with $Q$ if and only if 
\begin{equation}\lb{3.83}
\widetilde{c}_1 = \widetilde{c}_2 = d_1 = d_2 = d_3 = d_4 = 0,\, c_1=-2\widetilde{d}_1,\, c_2=-2\widetilde{d}_2,
\end{equation}
with $\widetilde{d}_1$ and $\widetilde{d}_2$ arbitrary.  $($One could just as well arrive at \eqref{3.83} using self-adjointness of $U(x)$.$)$  Taking \eqref{3.83} for granted, for a fixed choice of constants $\widetilde{d}_1$ and $\widetilde{d}_2$, $U(x)$ has the form
\begin{equation}\lb{3.84}
U(x) = \begin{pmatrix} -2A(x) & A(x)\\
A(x) & 0
\end{pmatrix},\quad x\in \bbR,
\end{equation}
where we have set
\begin{equation}
A(x) = \widetilde{d}_1 e^{\sqrt{2}x} + \widetilde{d}_2 e^{-\sqrt{2}x},\quad x\in \bbR.
\end{equation}
One then computes the eigenvalues of $U(x)$ in \eqref{3.84} to be
\begin{equation}
\lambda_{\pm}(x) = -A(x) \pm \sqrt{2}|A(x)|,\quad x\in \bbR.
\end{equation}
Since $\lambda_-(x)\leq 0$ for all $x\in \bbR$, $U(x)$ is not positive definite for any value of $x$, let alone for all $x\in \bbR$.
\end{example}
%%%%%%%%%%%%%%

In addition, items $(III)$ and $(IV)$ are ruled out by the following elementary 
constant coefficient example:

%%%%%%%%%%%%%%
\begin{example} \lb{e3.12}
Let $m=2$, $(a,b)=\bbR$, $P(\cdot) = R(\cdot) = I_2$ a.e.\ on $\bbR$, $q_0\in\bbR\backslash\{0\}$, and 
\begin{equation}\lb{4.19}
Q(x) = \begin{pmatrix} 0 & q_0 \\ q_0 &  0\end{pmatrix}. 
\end{equation}
One verifies that 
\begin{equation}
T_{min} \geq - |q_0|. 
\end{equation}
Assuming that $E\leq -|q_0| <0$, let 
\begin{equation}
\delta_{\pm} (E) = \sqrt{|E\pm q_0|}.
\end{equation}
We claim that $U_{\infty} (E, \cdot \,)$, defined by 
\begin{equation}
U_{\infty} (E,x)=\begin{pmatrix} e^{-\delta_- (E) x} 
& -e^{-\delta_+ (E) x} \\
e^{-\delta_- (E) x} & e^{-\delta_+ (E) x} \end{pmatrix}, \quad
E\leq -|q_0|<0, \; x \in \bbR,     \lb{4.24}
\end{equation}
is a  principal solution of $L U = E U$ at $\infty$.
That $U_{\infty}(E, \cdot \,)$ is self-conjugate follows from the observation that 
\begin{equation}
(U_{\infty}(E,x)')^*U_{\infty}(E,x) =\begin{pmatrix}
-2\delta_- (E) e^{-\delta_- (E) x}&
0 \\  0 & -2\delta_+ (E) e^{-\delta_+ (E) x}  \end{pmatrix}. 
\end{equation}
Since $\det(U_{\infty}(E,x)) = 2e^{-(\delta_- (E)+\delta_+ (E))x}$,
one infers that $U_{\infty}(E, \cdot \,)$  is invertible on $\bbR$. That this
particular solution is  principal at $\infty$ follows from the fact that 
\begin{align}
\begin{split} 
& \bigg[\int_0^x\, dx' \, U_{\infty}(E,x')^{-1}\big[U_{\infty}(E,x')^{-1}\big]^*\bigg]^{-1}    \\
& \quad =\begin{pmatrix} 4\delta_- (E)/(e^{2\delta_- (E) x}-1) & 0\\ 
0 &  4\delta_+ (E)/(e^{2\delta_+ (E) x}-1) 
\end{pmatrix}\underset{x\uparrow \infty}{\rightarrow} 0.
\end{split} 
\end{align} 
Next, we turn to all principal solutions of this example and hence
consider
\begin{align}
\widetilde U_\infty(E,x)&=\begin{pmatrix} e^{-\delta_- (E) x} 
& -e^{-\delta_+ (E) x} \\
e^{-\delta_- (E) x} & e^{-\delta_+ (E) x} \end{pmatrix}
\begin{pmatrix} \alpha & \beta \\
\gamma & \epsilon \end{pmatrix} \no\\
&=\begin{pmatrix} \alpha e^{-\delta_- (E) x}-\gamma e^{-\delta_+ (E)x}
& \beta e^{-\delta(E)x}-\epsilon e^{-\delta_+ (E) x} \\
\alpha e^{-\delta_- (E) x}+\gamma e^{-\delta_+ (E)x} & 
\beta e^{-\delta(E)x} +\epsilon e^{-\delta_+ (E)x}\end{pmatrix}
\end{align}
with $\left(\begin{smallmatrix} \alpha & \beta \\
\gamma & \epsilon \end{smallmatrix}\right)\in\bbC^{2\times 2}$ a
nonsingular constant matrix. 

\noindent 
By inspection, $\widetilde U_\infty(-|q_0|, \cdot \,)$ is never positivity
preserving $($let alone, improving\,$)$. 
\end{example}
%%%%%%%%%%%%%

The question of positive vector solutions of $L u = \lambda_0 u$ has been studied in the literature 
and we refer, for instance to \cite{Ah83}, \cite{AL76}, \cite{AL77}, \cite{AL78b}, \cite{FZ95}, 
\cite{SS78}.

We conclude with the remark that the results presented in this section extend from the case of 
$m \times m$ matrix-valued coefficients to the situation of operator-valued coefficients in an 
infinite-dimensional, complex, separable Hilbert space. For instance, basic Weyl--Titchmarsh 
theory for the infinite-dimensional case has been derived by Gorbachuk \cite{Go68}, 
Gesztesy, Weikard, and Zinchenko \cite{GWZ13}, \cite{GWZ13a}, Saito \cite{Sa71}, \cite{Sa71a}, 
\cite{Sa72}, \cite{Sa79} (see also \cite{GKMT01}, \cite[Chs.\ 3, 4]{GG91}, \cite{MN11}, \cite{Mo07}, \cite{Mo09}, \cite[Chs.\ 1--4]{RK05}, \cite{Tr00}, \cite{VG70}). For oscillation theoretic results 
in the infinite-dimensional context we refer, for example, to \cite{EL80}, \cite{EP76}, \cite{EP77}, 
\cite{HH70}, \cite{KW79}, \cite{Ku82}. A detailed treatment of this circle of ideas will appear 
elsewhere. 

\medskip

%%%%%%%%%%%%%%%%%%%%%%%%%%%%%%%%%%%%%
\noindent 
{\bf Acknowledgments.} We are indebted to Don Hinton for directing our attention to reference 
\cite{HS81} and the connection between Weyl--Titchmarsh and principal solutions pointed out 
therein. We are also grateful to Shinichi Kotani for pointing out reference \cite{KK74} to us in 
connection with formula \eqref{2.47}. Finally, we are grateful to Hubert Kalf for kindly bringing reference \cite{Ku67} to our attention and for repeated correspondence in this context. 
%%%%%%%%%%%%%%%%%%%%%%%%%%%%%%%%%%%%%

%%%%%%%%%%%%%%%%%%%%%%%%%%%%%%%%
%%%%%%%%%%%%%%%%%%%%%%%%%%%%%%%%
 
\end{document}